%
%

\documentclass[12pt,a4paper,twoside,reqno]{amsart}
\title[Holomorphic string algebroids]{Holomorphic string algebroids}
\author[M. Garcia-Fernandez, R. Rubio, and C. Tipler]{Mario Garcia-Fernandez, Roberto Rubio, and Carl Tipler}

\address{Dep. Matem\'aticas, Universidad Aut\'onoma de Madrid, and Instituto de Ciencias Matem\'aticas (CSIC-UAM-UC3M-UCM), Cantoblanco, 28049 Madrid, Spain}
\email{mario.garcia@icmat.es}

\address{Weizmann Institute of Science, 234 Herzl St, Rehovot 76100, Israel} 
\curraddr{Universitat Aut\`onoma de Barcelona, 08193 Barcelona, Spain}
\email{roberto.rubio@uab.es}

\address{LMBA, UMR CNRS 6205; D\'epartement de
  Math\'ematiques, Universit\'e de Bretagne Occidentale, 6, avenue
  Victor Le Gorgeu, 29238 Brest Cedex 3 France}
\email{carl.tipler@univ-brest.fr}

\thanks{\noindent This project has received funding from the European Union's Horizon 2020 research and innovation programme under the Marie Sklodowska-Curie grant agreement No 655162. MGF is supported by Universidad Aut\'onoma de Madrid and was initially supported by a Marie Sklodowska-Curie grant. RR is supported by the Weizmann Institute (ERC StG grant 637912, ISF grant 687/13) and was initially supported by IMPA. CT is supported by the French government ``Investissements d'Avenir'' program ANR--11--LABX--0020--01 and ANR project EMARKS No ANR--14--CE25--0010.}

\usepackage{hyperref,url}
\usepackage{amssymb,latexsym}
\usepackage{amsmath}
\usepackage[latin1]{inputenc}
\usepackage{enumerate}
\usepackage[all]{xy} \CompileMatrices
\usepackage{amscd}

\usepackage{tikz}
\usetikzlibrary{matrix}

\usepackage{slashed} 

\SelectTips{cm}{12} 
\newtheorem{theorem}{Theorem}[section]
\newtheorem{lemma}[theorem]{Lemma}
\newtheorem{corollary}[theorem]{Corollary}
\newtheorem{proposition}[theorem]{Proposition}

\newtheorem{definition}[theorem]{Definition}
\newtheorem{definition-theorem}[theorem]{Definition-Theorem}
\newtheorem{example}[theorem]{Example}
\newtheorem{remark}[theorem]{Remark}

\numberwithin{equation}{section} \setcounter{tocdepth}{1}

\setcounter{tocdepth}{1}


\newcommand{\tr}{\operatorname{tr}}
\newcommand{\Id}{\operatorname{Id}}

\newcommand{\Ker}{\operatorname{Ker}}
\newcommand{\ad}{\operatorname{ad}}

\newcommand{\Aut}{\operatorname{Aut}}
\newcommand{\dbar}{\bar{\partial}}

\newcommand{\CC}{{\mathbb C}}

\newcommand{\RR}{{\mathbb R}}
\newcommand{\ZZ}{{\mathbb Z}}


\newcommand{\surj}{\to\kern-1.8ex\to}

\newcommand{\lra}[1]{\stackrel{#1}{\longrightarrow}}

\newcommand{\xra}[1]{\xrightarrow{#1}}

\newcommand{\cA}{\mathcal{A}}
\newcommand{\cC}{\mathcal{C}}

\newcommand{\cM}{\mathcal{M}}
\newcommand{\cP}{\mathcal{P}}

\newcommand{\cG}{\mathcal{G}}
\newcommand{\cL}{\mathcal{L}}
\newcommand{\cO}{\mathcal{O}}

\newcommand{\cS}{\mathcal{S}}

\newcommand{\Lie}{\operatorname{Lie}}

\newcommand{\cH}{\mathcal{H}} 


\def\Om{\Omega}

\def\Lie{\mathrm{Lie}}

\def\Im{\mathrm{Im}}
\def\Id{\mathrm{Id}}

\def\cA{\mathcal{A}}
\def\cB{\mathcal{B}}

\def\cG{\mathcal{G}}
\def\cH{\mathcal{H}}
\def\cP{\mathcal{P}}

\def\fL{\mathfrak{L}}

\def\faut{\mathfrak{Aut}}

\def\del{\partial}
\def\delb{\overline\partial}


%

\newcommand{\st}{\;|\;}

\newcommand{\C}{{\mathbb{C}}}

\newcommand{\fg}{\mathfrak{g}}

\newcommand{\la}{\langle}
\newcommand{\ra}{\rangle}

\newcommand{\Spin}{\mathrm{Spin}}

\renewcommand{\Im}{\mathrm{Im}}




\begin{document}

\maketitle

\begin{abstract}
We introduce the category of \emph{holomorphic string algebroids}, whose objects are Courant extensions of Atiyah Lie algebroids of holomorphic principal bundles, and whose morphisms correspond to \emph{inner morphisms} of the underlying holomorphic Courant algebroids. This category provides natural candidates for Atiyah Lie algebroids of holomorphic principal bundles for the (complexified) \emph{string group} and their morphisms. Our main results are a classification of string algebroids in terms of \v Cech cohomology, and the construction of a locally complete family of deformations of string algebroids via a differential graded Lie algebra.
\end{abstract}


\section{Introduction}
\label{sec:intro}

The first Pontryagin class was interpreted in \cite{Bressler} as the obstruction to define certain sheaves of chiral differential operators \cite{BD,GMS,MSV}, associated to a Lie algebroid $A\xra{\pi} TX$ endowed with an invariant pairing on the kernel of the anchor map $\pi$. 
It turns out that the classification problem for these 
sheaves 
reduces to the study of Courant extensions of $A$, given by a Courant algebroid $Q$ and a short exact sequence
\begin{equation}\label{eq:sesintro}
	\xymatrix{
		0 \ar[r] & T^*X \ar[r] & Q \ar[r]^\rho & A \ar[r] & 0
	}
\end{equation}
such that $\rho$ is a bracket-preserving map. A complete classification of extensions of the form \eqref{eq:sesintro} was obtained in 
\cite{ChStXu} in a differential-geometric setting.

In this work we introduce the category of \emph{holomorphic string algebroids} for a complex manifold $X$ and a complex Lie group $G$. 
Objects in this category are Courant extensions $(Q,P,\rho)$ of Atiyah Lie algebroids $A_P$ of holomorphic principal $G$-bundles $P$ over $X$, while morphisms correspond to \emph{inner morphisms} of $Q$ in the sense of \v Severa \cite{Severa}. Upon fixing the principal bundle $P$, these special morphisms define a subcategory of the Courant extensions of $A_P$ considered in \cite{Bressler}. When $G$ is trivial, a string algebroid is simply an exact holomorphic Courant algebroid 
\cite{G2}. Our main focus is the study of two basic pieces of the theory of holomorphic string algebroids: their classification and their deformation theory, which will be described below.

%
%

A geometric interpretation of the notion of holomorphic string algebroid can be found in higher gauge theory. Recent work in \cite{Xu} constructs a morphism from the stack of smooth principal $2$-bundles of string groups to the stack of smooth transitive Courant algebroids (with connection data), which can be understood as the process of associating the ``higher Atiyah algebroid'' to the string principal bundle. This morphism turns out to be neither injective nor surjective, mainly due to the fact that (leaving integrality conditions aside) the gluing data for objects in the image, rather than given by general Courant algebroid automorphisms, corresponds to the more restrictive inner automorphisms of \v Severa. Thus, our holomorphic string algebroids provide natural candidates for Atiyah Lie algebroids of holomorphic principal bundles for the (complexified) string group. 

A initial motivation for this paper was to find an analogue 
in the holomorphic category of a general picture found in previous work by the authors (see \cite{grt}, and \cite{cgt} jointly with A. Clarke) 
about the moduli problem for the Hull-Strominger system of partial differential equations \cite{HullTurin,Strom}. In the smooth setup,
these equations are defined on a compact spin manifold $M$ endowed with a principal bundle $P$ with compact structure group $K$ and vanishing first Pontryagin class. \emph{Gauge symmetries} are given by the group of equivariant diffeomorphisms of $P$ that project to the identity component of $\operatorname{Diff}(M)$. The corresponding moduli space $\cM$ has a natural map
\begin{equation}\label{eq:vartheta}
\vartheta \colon \cM \to H^3_{str}(P,\RR)/I_{[P]},
\end{equation}
where $H^3_{str}(P,\RR)$ denotes the $H^3(M,\RR)$-torsor of real string classes of $P$ \cite{Redden}, and $I_{[P]} \subset H^3(M,\RR)$ is an additive subgroup associated to the gauge group of $P$ (see Appendix \ref{sec:smooth}). Remarkably, a level set for $\vartheta$ can be regarded as a moduli space of solutions of the \emph{Killing spinor equations} on a smooth string algebroid, as defined in \cite{GF3}, modulo the action of \v Severa's automorphisms. When $K = \operatorname{Spin}(r)$, the integral elements of $H^3_{str}(P,\RR)$ correspond to isotopy classes of lifts of $P \colon M \to  B\operatorname{Spin}(r)$ to the classifying space of the string~group~\cite{Waldorf}. To some extent, a holomorphic version of this picture is provided by our Proposition \ref{theo:deRhamC}.


In a sequel to the present work \cite{grt2}, in collaboration with C. Shahbazi, we use holomorphic string algebroids to take some first steps towards an analogue of the Donaldson-Uhlenbeck-Yau Theorem in higher gauge theory for complex non-K\"ahler manifolds. Key to our development in \cite{grt2} is the classification theorem of string algebroids, Theorem \ref{th:classification} in the present work, which identifies the `holomorphic data' underlying a solution of the Hull-Strominger system. In a nutshell, a tuple $(Q,P,\rho)$ is to a solution of the Hull--Strominger system the same as a holomorphic bundle is to a solution of the Hermite-Yang-Mills equations.
Moreover, we expect that our deformation theory for string algebroids, Theorem \ref{thm:slice}, 
will play an important role in future studies about the existence and uniqueness problem, as well as in the moduli problem for the Hull-Strominger system.

Holomorphic string algebroids seem to appear in recent physical advances in the understading of the moduli space of heterotic string compactifications \cite{AGS,COM,OssaSvanes,MelSha}. In this setup, a remarkable observation in \cite{AOMSS} is that the deformation theory for pairs given by a Calabi-Yau manifold and a holomorphic string algebroid relates to the \emph{heterotic superpotential} and \emph{Yukawa couplings} for the heterotic string. Similar results for the $G_2$ Hull-Strominger system are provided in \cite{Larfors}. 

The structure and main results of the paper are as follows. In Section \ref{sec:def-basic}, we introduce (holomorphic) string algebroids and their morphisms, we interpret them in terms of Dolbeault operators on orthogonal bundles, and describe the group of automorphisms.
Section \ref{sec:holomorphiccase} deals with the classification of string algebroids in terms of \v Cech cohomology. This is a technically involved problem but we give a detailed and satisfactory answer in Theorem \ref{th:classification}. 
As a consequence, in Proposition \ref{theo:deRhamC} we prove that the parameter space for $P$ fixed becomes a torsor for a quotient of  $H^1(\Omega^{2,0}_{cl})$--the space of isomorphism classes of exact holomorphic Courant algebroids--by a subgroup $I_{[P]}$ depending on the holomorphic gauge group of $P$. The quotient $H^1(\Omega^{2,0}_{cl})/I_{[P]}$ provides an analogue in the holomorphic category of the domain of $\vartheta$ in \eqref{eq:vartheta}.
In Section \ref{sec:deformation-theory}, we study the deformation theory of a string algebroid $(Q,P,\rho)$ on a fixed manifold $X$. 
In particular, in Proposition \ref{theo:MC} we construct a differential graded Lie algebra whose Maurer-Cartan equation describes the deformation theory of a string algebroid. This is used in Theorem \ref{thm:slice} to give a locally complete family of deformations, by following the method of the Kuranishi slice. 
Finally, Appendix \ref{sec:smooth} contains an analogous study of the classification of smooth string algebroids. 

\textbf{Acknowledgments:} The authors would like to thank Ra\'ul Gonz\'alez Molina, Pedram Hekmati, Johannes Huisman, Brent Pym, Ping Xu and Xiaomeng Xu for helpful conversations.
Part of this work was undertaken during visits of MGF to the IMS at ShanghaiTech University, the Fields Institute, LMBA and IMPA, of RR to ICMAT and of CT to IMPA, CIRGET, ICMAT and the Weizmann Institute. We would like to thank these very welcoming institutions for the support and for providing a nice and stimulating working environment.

\section{Definition and basic properties}
\label{sec:def-basic}

In this section we introduce our main object of study: string algebroids, and their morphisms. Roughly speaking, these are certain Courant extensions \cite{Bressler} 
of the Atiyah Lie algebroid of a holomorphic principal bundle. Their morphisms correspond to the \emph{inner morphisms} of the underlying Courant algebroid, as introduced by \v Severa \cite[Letter 4]{Severa}. 

\subsection{Definition of string algebroids}\label{subsec:string}

We start by recalling some basic properties about holomorphic Courant algebroids, following \cite{ChStXu,GrSt}. Let $X$ be a complex manifold of dimension $n$. We denote by $\cO_X$ and $\underline{\CC}$ the sheaves of holomorphic functions and $\CC$-valued constant functions on $X$, respectively.

\begin{definition}\label{def:Courant}
A holomorphic Courant algebroid $(Q,\la\cdot,\cdot\ra,[\cdot,\cdot],\pi)$ over $X$ consists of a holomorphic vector bundle $Q \to X$, with sheaf of sections also denoted by $Q$, together with a holomorphic non-degenerate symmetric bilinear form 
$$
\la\cdot,\cdot\ra \in H^0(Q^*\otimes Q^*),
$$
a holomorphic vector bundle morphism $\pi:Q\to TX$ called anchor map, and a Dorfman bracket, that is, a homomorphism of sheaves of $\underline{\CC}$-modules
$$
[ \cdot,\cdot ] \colon Q \otimes_{\underline{\CC}} Q \to Q,
$$
satisfying, for $u,v,w\in Q$ and $\phi\in \cO_X$,
  \begin{itemize}
  \item[(D1):] $[u,[v,w]] = [[u,v],w] + [v,[u,w]]$,
  \item[(D2):] $\pi([u,v])=[\pi(u),\pi(v)]$,
  \item[(D3):] $[u,\phi v] = \pi(u)(\phi) v + \phi[u,v]$,
  \item[(D4):] $\pi(u)\la v, w \ra = \la [u,v], w \ra + \la v,
    [u,w] \ra$,
  \item[(D5):] $[u,v]+[v,u]=2\pi^* d\la u,v\ra$.
  \end{itemize}
\end{definition}

We shall denote a holomorphic Courant algebroid $(Q,\la\cdot,\cdot\ra,[\cdot,\cdot],\pi)$ simply by~$Q$. Using the isomorphism
$$
\langle \cdot,\cdot \rangle \colon Q \to Q^*
$$
we obtain a holomorphic sequence
\begin{equation*}\label{eq:holCouseq}
  \xymatrix{
    T^*X \ar[r]^{\pi^*} & Q \ar[r]^{\pi} & TX.
  }
\end{equation*}
The subsheaf $(\Ker \pi)^\perp \subset Q$ generated by the local sections orthogonal to $\Ker \pi$ coincides with $\pi^*(T^*X)$, so it is coisotropic, that is,
$$
(\Ker \pi)^\perp  \subset \Ker \pi.
$$
The sheaves $(\Ker \pi)^\perp$ and $\Ker \pi$ are two-sided ideals of $Q$ with respect to the Dorfman bracket $[\cdot,\cdot]$.

A holomorphic Courant algebroid $Q$ is said to be transitive when the anchor map $\pi \colon Q \to TX$ is surjective. In this case, both $(\Ker \pi)^\perp$ and $\Ker \pi$ are locally free and the quotient $$
A_Q = Q/(\Ker \pi)^\perp
$$ 
is a vector bundle inheriting the structure of a holomorphic Lie algebroid. Furthermore, the holomorphic subbundle
\begin{equation*}\label{eq:adQ}
\ad_Q = \Ker \pi/(\Ker \pi)^\perp \subset A_Q
\end{equation*}
becomes a holomorphic bundle of quadratic Lie algebras $(\ad_Q,\la\cdot,\cdot\ra)$.

We introduce next the notion of a string algebroid. Intuitively, a string algebroid is a transitive holomorphic Courant algebroid $Q$ such that $A_Q$ and $\ad_Q$ are determined by a holomorphic principal bundle and a choice of bi-invariant pairing in the Lie algebra of the structure group.

To be more precise, let $G$ be a complex Lie group. Let $p \colon P \to X$ be a holomorphic principal $G$-bundle over $X$. The holomorphic Atiyah Lie algebroid $A_{P}$ of $P$ has underlying holomorphic bundle
$$
TP/G \to X,
$$
with local sections given by $G$-invariant holomorphic vector fields on $X$, anchor map $dp \colon TP/G \to TX$, and bracket induced by the Lie bracket on $TP$.  The holomorphic bundle of Lie algebras $\Ker dp \subset A_{P}$ fits into the short exact sequence of holomorphic Lie algebroids
$$
0 \to \Ker dp \to A_{P} \to TX \to 0
$$
and is isomorphic to the adjoint bundle,
$$
\Ker dp \cong \ad P = P \times_{G} \mathfrak{g},
$$
induced by the adjoint representation of $G$ on its Lie algebra $\mathfrak{g}$.

 From now on, we fix a non-degenerate bi-invariant symmetric bilinear form
$$
c \colon \mathfrak{g} \otimes \mathfrak{g} \to \mathbb{C}.
$$
Using the pairing $c$, the adjoint bundle inherits naturally the structure of a holomorphic bundle of quadratic Lie algebras $(\ad P,c)$.


\begin{definition}\label{def:stringholCour}
A string algebroid with structure group $G$ and underlying pairing $c$ is a tuple $(Q,P,\rho)$ consisting of:
\begin{itemize}
	\item a transitive holomorphic Courant algebroid $Q$,
	\item a holomorphic principal $G$-bundle $P$,
	\item a holomorphic bracket-preserving morphism $\rho:Q\to A_P$ fitting into a short exact sequence
	\begin{equation}\label{eq:defstring}
	\xymatrix{
		0 \ar[r] & T^*X \ar[r] & Q \ar[r]^\rho & A_P \ar[r] & 0
	}
	\end{equation}
	such that the induced map of holomorphic Lie algebroids $\rho \colon A_Q \to A_P$ is an isomorphism restricting to an isomorphism $(\ad_Q,\la\cdot,\cdot\ra ) \cong (\ad P,c)$. 
\end{itemize}
\end{definition}

Note that the property of being a bracket-preserving morphism implies that $\rho$ is compatible with the anchor maps, in the sense that $dp \circ \rho = \pi$. In other words, $\rho$ is a morphism of Leibniz algebroids. Consequently, when $G$ is trivial, a string algebroid is simply an exact holomorphic Courant algebroid as in \cite{G2}.


The following definition of morphism between string algebroids will be key to the present work.

\begin{definition}\label{def:stringholCourmor}
A morphism of string algebroids from $(Q,P,\rho)$ to $(Q',P',\rho')$ is a pair $(\varphi,g)$, where $\varphi \colon Q \to Q'$ is a morphism of holomorphic Courant algebroids and $g \colon P \to P'$ is an homomorphism of holomorphic principal bundles covering the identity on $X$, such that the following diagram is commutative
\begin{equation}\label{eq:defstringiso}
    \xymatrix{
      0 \ar[r] & T^*X \ar[r] \ar[d]^{id} & Q \ar[r]^\rho \ar[d]^{\varphi} & A_P \ar[r] \ar[d]^{g} & 0,\\
      0 \ar[r] & T^*X \ar[r] & Q' \ar[r]^{\rho'} & A_{P'} \ar[r] & 0.
    }
\end{equation}
We say that $(Q,P,\rho)$ is isomorphic to $(Q',P',\rho')$ if there exists a morphism $(\varphi,g)$ such that $\varphi$ and $g$ are isomorphisms.
\end{definition}

Given a string algebroid $(Q,P,\rho)$ we say that $P$ is the underlying principal bundle of $Q$, or that $Q$ is a Courant algebroid extension of $P$ (cf. \cite{Bressler}). We say that $(Q,P,\rho)$ and $(Q',P,\rho')$ are equivalent as Courant extensions if they are isomorphic and further $g = id$ in \eqref{eq:defstringiso}. 

\subsection{String algebroids and Dolbeault operators}\label{sec:stringDolb}

We study next string algebroids from the point of view of differential geometry, in terms of smooth vector bundles and Dolbeault operators. Our discussion follows closely the classification of regular Courant algebroids in \cite{ChStXu} (see also \cite[Section 3]{GF}). 
We denote by $\Omega^{p,q}$ the space of differential forms on $X$ of type $(p,q)$. Given a smooth complex vector bundle $E$, we denote by $\Omega^{p,q}(X,E)$ (or $\Om^{p,q}(E)$) the space of smooth $E$-valued differential forms on $X$ of type $(p,q)$. 

Let $(Q,P,\rho)$ be a string algebroid on $X$. 
We denote by $\underline{Q}$ the smooth complex vector bundle underlying $Q$ and by $\underline{P}$ the smooth principal $G$-bundle underlying $P$. 
We choose a smooth isotropic splitting 
$$
\lambda \colon T^{1,0}X \to \underline{Q}.
$$
Let $\theta$ be the unique connection on $\underline{P}$ such that 
\begin{equation}\label{eq:thetalambda}
\theta^\perp = \rho \circ \lambda \qquad \textrm{and} \qquad P_\theta = (\underline{P},\theta^{0,1}) = P,
\end{equation}
where $\theta^\perp \colon T^{1,0}X \to A_P$ is the lifting determined by $\theta$. Note that the curvature $F_\theta$ of $\theta$ satisfies 
$$
F_\theta^{0,2} = 0.
$$
This data determines a vector bundle isomorphism
\begin{equation}\label{eq:Qsmooth}
\varphi_\lambda \colon \underline Q_0 = T^{1,0}X \oplus \ad \underline{P} \oplus (T^{1,0}X)^* \to \underline{Q} 
\end{equation}
defined by 
$$
\varphi_\lambda(V + r + \xi) = \rho_{|\lambda^\perp}^{-1}(\theta^\perp V + r) + \frac{1}{2}\pi^*\xi,
$$
where $\lambda^\perp \subset \underline Q$ is the orthogonal complement to the image of $\lambda$, and $\pi^* \colon (T^{1,0}X)^* \to \underline Q$ denotes the composition of the dual of the anchor map $\pi$ with the isomorphism $\underline Q^* \to \underline Q$ given by $\la\cdot,\cdot\ra$. Via $\varphi_\lambda$, the induced pairing is
\begin{equation}\label{eq:holpairing}
\langle V + r + \xi , V + r + \xi \rangle_0 = \xi(V) + c(r,r),
\end{equation}
the induced anchor map is
\begin{equation}
\label{eq:anchor}
 \pi_0(V+ r + \xi) = V,
\end{equation}
and we define
\begin{equation}\label{eq:rho0}
\rho_0 := \rho \circ \varphi_\lambda \colon \underline Q_0 \to A_P \colon V + r + \xi \mapsto \theta^\perp V + r.
\end{equation}
It is obvious that $(Q_0,P,\rho_0)$, where $Q_0$ is $\underline{Q}_0$ endowed with the Dolbeault operator $\dbar_0$ induced by $\varphi_\lambda$, the pairing $\la\cdot,\cdot\ra_0$, and the induced Dorfman bracket $[\cdot,\cdot]_0$, is a holomorphic string algebroid, which is isomorphic to $(Q,P,\rho)$ via the following diagram 
\begin{equation}\label{eq:Q0}
    \xymatrix{
      0 \ar[r] & T^*X \ar[r] \ar[d]^{id} & Q_0 \ar[r]^{\rho_0} \ar[d]^{\varphi_\lambda} & A_P \ar[r] \ar[d]^{\Id} & 0,\\
      0 \ar[r] & T^*X \ar[r] & Q \ar[r]^{\rho} & A_{P} \ar[r] & 0.
    }
\end{equation}
Our next goal is to express $\dbar_0$ and $[\cdot,\cdot]_0$ in terms of explicit data in the complex manifold $X$. The holomorphicity of the pairing $\langle \cdot,\cdot \rangle_0$ implies that
$$
\dbar \langle q_1 , q_2 \rangle_0 = \langle \dbar_0 q_1 , q_2 \rangle_0 + \langle q_1 , \dbar_0 q_2 \rangle_0
$$
for any pair of smooth sections $q_1,q_2 \in \Omega^0(\underline{Q}_0)$.  Combined with the holomorphicity of the top exact sequence in \eqref{eq:Q0}, it follows that
\begin{equation}\label{eq:DolbQbis}
\dbar_0 = \left(
\begin{array}{ccc}
\dbar & 0 & 0 \\
F_\theta^{1,1} & \dbar^\theta & 0 \\
H^{2,1} & 2c(F_\theta^{1,1},\cdot) & \dbar
\end{array}\right),
\end{equation}
for a suitable choice of $(2,1)$-form $H^{2,1} \in \Omega^{2,1}$, where $\dbar^\theta$ is the Dolbeault operator on $\ad \underline{P}$ induced by $\theta^{0,1}$. More explicitly
$$
\dbar_0 (V + r + \xi) = \dbar V + i_V F_\theta^{1,1} + \dbar^\theta r + \dbar \xi + i_{V}H^{2,1} + 2c(F_\theta^{1,1},r).
$$
Finally, 
arguing as in \cite[Thm. 2.3]{ChStXu}, we deduce the following explicit form for the bracket
\begin{equation}\label{eq:bracket}
\begin{split}
[V+ r + \xi,W + t + \eta]_0  = {}& [V,W] - F^{2,0}_\theta(V,W) + \partial^\theta_V t - \partial^\theta_W r - [r,t]\\
& + i_V \partial \eta + \partial (\eta(V)) - i_W\partial \xi + i_Vi_W H^{3,0}\\
& + 2c(\partial^\theta r,t) + 2c(i_V F_\theta^{2,0},t) - 2c(i_W F_\theta^{2,0},r)
\end{split}
\end{equation}
for $H^{3,0} \in \Omega^{3,0}$ defined by
\begin{equation}\label{eq:H30}
H^{3,0}(V,W,Z) = - 2 \la [\lambda(V),\lambda(W)],\lambda(Z)\ra.
\end{equation}

\begin{proposition}\label{prop:strholCour}
Let $(Q,P,\rho)$ be a string algebroid on $X$. Then, a choice of isotropic splitting $
\lambda \colon T^{1,0}X \to \underline{Q}$ determines a connection $\theta$ on $\underline{P}$ and $H = H^{3,0} + H^{2,1} \in \Omega^{3,0} \oplus \Omega^{2,1}$, via \eqref{eq:thetalambda}, \eqref{eq:DolbQbis} and \eqref{eq:H30}, such that
\begin{equation}\label{eq:int2}
dH + c(F_\theta \wedge F_\theta) = 0.
\end{equation}
Furthermore, the string algebroids $(Q,P,\rho)$ and $(Q_0,P,\rho_0)$ are isomorphic via \eqref{eq:Q0}. Conversely, given a triple $(P,H,\theta)$, where $P$ is a holomorphic $G$-bundle over $X$, $\theta$ is a connection on $\underline{P}$ such that $P_\theta = P$, and $H \in \Omega^{3,0} \oplus \Omega^{2,1}$ satisfying \eqref{eq:int2}, there is a string algebroid $(Q_0,P,\rho_0)$, with underlying smooth bundle
$$
\underline Q_0 = T^{1,0}X \oplus \ad \underline{P} \oplus (T^{1,0}X)^*,
$$
pairing \eqref{eq:holpairing}, anchor \eqref{eq:anchor}, Dolbeault operator \eqref{eq:DolbQbis}, bracket \eqref{eq:bracket}, and $\rho_0$ given by \eqref{eq:rho0}.
\end{proposition}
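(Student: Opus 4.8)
The isomorphism claim and the explicit shapes \eqref{eq:DolbQbis} and \eqref{eq:bracket} have already been extracted in the discussion preceding the statement, so the genuine content of the forward direction is the integrability identity \eqref{eq:int2}, while the content of the converse is that an abstract triple $(P,H,\theta)$ reconstructs a bona fide holomorphic string algebroid. The plan is to handle both directions at once by showing that, once the pairing \eqref{eq:holpairing}, the anchor \eqref{eq:anchor}, the operator $\dbar_0$ of \eqref{eq:DolbQbis} and the bracket \eqref{eq:bracket} are declared on $\underline Q_0 = T^{1,0}X \oplus \ad \underline{P} \oplus (T^{1,0}X)^*$, each defining property of a holomorphic Courant algebroid becomes equivalent to one bidegree component of \eqref{eq:int2}. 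In the forward direction these properties hold because $(Q,P,\rho)$ is a string algebroid and all structures are transported isomorphically through $\varphi_\lambda$ via \eqref{eq:Q0}; reading the equivalences backwards then proves the converse.

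Concretely, writing $F_\theta = F_\theta^{2,0} + F_\theta^{1,1}$ (recall $F_\theta^{0,2}=0$ since $P_\theta = P$) and $H = H^{3,0}+H^{2,1}$, the identity \eqref{eq:int2} splits by type into
\begin{align*}
(4,0)&:\quad \partial H^{3,0} + c(F_\theta^{2,0}\wedge F_\theta^{2,0}) = 0,\\
(3,1)&:\quad \dbar H^{3,0} + \partial H^{2,1} + 2c(F_\theta^{2,0}\wedge F_\theta^{1,1}) = 0,\\
(2,2)&:\quad \dbar H^{2,1} + c(F_\theta^{1,1}\wedge F_\theta^{1,1}) = 0.
\end{align*}
I would match these to the structure as follows. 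First, computing $\dbar_0^2$ from the lower-triangular form \eqref{eq:DolbQbis}: its diagonal vanishes because $\dbar^2=0$ and $(\dbar^\theta)^2 = F_\theta^{0,2}=0$, the $\ad\underline P$-entry vanishes by the $(1,2)$-component of the Bianchi identity $d^\theta F_\theta = 0$, and the remaining $(T^{1,0}X)^*$-entry is precisely $\dbar H^{2,1} + c(F_\theta^{1,1}\wedge F_\theta^{1,1})$; thus $\dbar_0^2=0$ (making $Q_0$ a holomorphic bundle) is equivalent to the $(2,2)$ equation. Second, the requirement that $\dbar_0$ be a derivation of $[\cdot,\cdot]_0$ — equivalently that the holomorphic bracket \eqref{eq:bracket} preserve $\Ker \dbar_0$ — reduces, again using Bianchi, to the $(3,1)$ equation. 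Third, evaluating the Jacobi identity (D1) on three anchored (vector-field) sections produces $i_Vi_Wi_Z\big(\partial H^{3,0}+c(F_\theta^{2,0}\wedge F_\theta^{2,0})\big)$, so (D1) for such sections is the $(4,0)$ equation. The holomorphicity of the pairing is already built into the shape of \eqref{eq:DolbQbis}, and axioms (D2)--(D5) are then straightforward bilinear verifications from the explicit formulas.

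For the converse one assumes \eqref{eq:int2}, hence all three bidegree equations, and runs these equivalences in reverse: the $(2,2)$ part gives $\dbar_0^2=0$ so that $Q_0$ is holomorphic with holomorphic pairing, the $(3,1)$ part makes the bracket descend to holomorphic sections, and the $(4,0)$ part supplies the ``flux'' part of (D1). One then checks that $\rho_0$ of \eqref{eq:rho0} is bracket-preserving with $dp\circ\rho_0=\pi_0$, that it fits into the short exact sequence \eqref{eq:defstring}, and that the induced map $A_{Q_0}\to A_P$ is an isomorphism restricting to $(\ad_{Q_0},\la\cdot,\cdot\ra_0)\cong(\ad P,c)$, all of which are immediate from \eqref{eq:holpairing}--\eqref{eq:rho0} since $(\Ker\pi_0)^\perp=(T^{1,0}X)^*$. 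I expect the main obstacle to be the full Jacobi identity (D1) for \emph{mixed} triples of sections: unlike the three special cases above, the general case mixes all bidegrees, and one must verify that after repeatedly invoking (D2)--(D5), the Bianchi identity, and the $\dbar_0$-compatibility already secured, every remaining obstruction collapses exactly onto the three displayed equations with no leftover terms. This is the long but mechanical computation carried out in the spirit of \cite[Thm.~2.3]{ChStXu}, which I would adapt rather than reproduce in full.
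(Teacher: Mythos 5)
Your proposal is correct and follows essentially the same route as the paper: the same splitting of \eqref{eq:int2} into its $(4,0)$, $(3,1)$ and $(2,2)$ components (the paper's \eqref{eq:integrab}), matched respectively to the Jacobi identity (D1), to the bracket defining a homomorphism of sheaves of $\underline{\CC}$-modules on holomorphic sections, and to $\dbar_0 \circ \dbar_0 = 0$ giving the holomorphic extension with holomorphic pairing, with both directions read off from these equivalences. Your extra detail (the matrix computation of $\dbar_0^2$ via the Bianchi identity, and deferring the mixed-type Jacobi verification to the computation in \cite[Thm.~2.3]{ChStXu}) is consistent with, and slightly more explicit than, what the paper records.
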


\begin{proof}
Given $(Q,P,\rho)$, $\lambda$ determines $(H,\theta)$ and the isomorphic string algebroid $(Q_0,P,\rho_0)$ as above. By \eqref{eq:thetalambda}, $F_\theta^{0,2} = 0$, and therefore \eqref{eq:int2} is equivalent to
\begin{equation}\label{eq:integrab}
\begin{split}
\partial H^{3,0} + c(F_\theta^{2,0} \wedge F_\theta^{2,0}) & = 0,\\
\dbar H^{3,0} + \partial H^{2,1} + 2 c(F_\theta^{2,0} \wedge F_\theta^{1,1}) & = 0,\\
\dbar H^{2,1} + c(F_\theta^{1,1} \wedge F_\theta^{1,1}) & = 0.\\
\end{split}
\end{equation}
The last condition is implied by $\dbar_0 \circ \dbar_0 = 0$. The second condition follows from the fact that $[\cdot,\cdot]_0$ defines a homomorphism of sheaves of $\underline{\CC}$-modules
$$
[ \cdot,\cdot ]_0 \colon \cO_{Q_0} \otimes_{\underline{\CC}} \cO_{Q_0} \to \cO_{Q_0},
$$
while the first condition is implied by (D1) in Definition \ref{def:Courant}.

For the converse, let $P$, $\theta$ and $H$ as in the statement, satisfying \eqref{eq:int2}. Then, the last condition in \eqref{eq:integrab} is equivalent to $\dbar_0$ in \eqref{eq:DolbQbis} defining a holomorphic extension of the form \eqref{eq:defstring} with a homomorphism of sheaves of $\cO_X$-modules
$$
\langle \cdot,\cdot \rangle_0 \colon \cO_{Q_0} \otimes_{\cO_X} \cO_{Q_0} \to \cO_X
$$
induced by \eqref{eq:holpairing}. The second condition ensures that $[\cdot,\cdot]$ defines a homomorphism of sheaves of $\underline{\CC}$-modules $
[ \cdot,\cdot ]_0 \colon \cO_{Q_0} \otimes_{\underline{\CC}} \cO_{Q_0} \to \cO_X$, while the first condition implies that (D1) in Definition \ref{def:Courant} is satisfied. The rest of axioms in Definition \ref{def:stringholCour} are satisfied by construction.
\end{proof}

As a consequence of Proposition \ref{prop:strholCour}, we obtain a necessary condition for a holomorphic principal bundle $P$ to admit a Courant extension of the form \eqref{eq:defstring}, namely,
\begin{equation}\label{eq:p1dR}
p_1^c(P) = 0 \in H^4(X,\CC).
\end{equation}
Here $p_1^c(P)$ denotes the first Pontryagin class of $P$ with respect to the bi-invariant symmetric pairing $c$ on $\mathfrak{g}$. Condition \eqref{eq:p1dR} boils down to the fact that a string algebroid $Q$ has an associated smooth complex Courant algebroid $\underline{Q} \oplus T^{0,1}X \oplus  (T^{0,1}X)^*$ \cite{GrSt}, combined with the classification results for transitive Courant algebroids in \cite{Bressler,ChStXu,Severa}. A refined version of this obstruction will be considered in Section \ref{subsec:1stcoho}.

To finish this section, we use Proposition \ref{prop:strholCour} to show some interesting examples of string algebroids on Calabi-Yau threefolds (not necessarily algebraic). Most of these examples have been intensively studied in the algebro-geometric and physics literature, motivated by the search of stable bundles with prescribed Chern classes in the study of heterotic string compactifications.

\begin{example}\label{example}
Let $\{V_j\}_{j=0}^k$ be a collection of holomorphic vector bundles over a complex manifold $X$ such that there exists complex numbers $(\mu_0,\ldots,\mu_k)$, not all zero, such that 
\begin{equation}\label{eq:Ch2}
\sum_{j=0}^k \mu_j \textrm{ch}_2(V_j) = 0 \in H^{2,2}_{BC}(X),
\end{equation}
where $H^{p,q}_{BC}(X)$ denote the Bott-Chern cohomology groups of $X$ (cf. \cite[Cor. 4.2]{Bressler}), and $\textrm{ch}_2(V_j)$ denotes the second Chern character of $V_j$ (see e.g. \cite{Kob}). Then, for any choice of hermitian metrics $h_j$ on $V_j$ there exists $\tau \in \Omega^{1,1}$ such that
$$
2i\partial \dbar \tau = \sum_{j=0}^k \mu_j\tr F_{h_j} \wedge F_{h_j},
$$
where $F_{h_j}$ denotes the curvature of the Chern connection of $h_j$. Let $P$ denote the holomorphic bundle of split frames of $\oplus_{j=0}^k V_j$, and consider the symmetric bilinear form
$$
c = \sum_{j=0}^k \mu_j\tr_{\mathfrak{gl}(r_j,\CC)}
$$
on the Lie algebra of the structure group, where $r_j$ denotes the rank of $V_j$. Then, by choosing $H = 2i \partial \tau \in \Omega^{2,1}$ and $\theta$ the Chern connection of $h_0 \times \ldots\times h_k$, we obtain a Courant extension of $P$, and hence a string algebroid, by applying Proposition \ref{prop:strholCour}. Explicit constructions of such bundles for $X$ a compact Calabi-Yau threefold with $k = 1$, $V_0 = TX$ and $V_1$ not isomorphic to $V_0$ can be found in \cite{AndHo2,DouglasZhou,Huyb,Jardim,LWY} and references therein.
\end{example}

\begin{remark}
In the physics literature one often finds \eqref{eq:Ch2} replaced by the more general condition
\begin{equation*}\label{eq:Ch2bis}
\sum_{j=0}^k \mu_j \textrm{ch}_2(V_j) = [W],
\end{equation*}
where $[W]$ is the dual of the homology class of a holomorphic curve on the threefold $X$. Provided that $X$ is compact and algebraic, the class $[W]$ can be realized as the second Chern character of a holomorphic vector bundle over $X$ (see \cite[Thm. 11.32]{Voisin}), and thus it falls within the situation considered in Example \ref{example}. Nonetheless, we expect that the right physical treatment of this class of examples uses the twisted Courant algebroids in \cite{Pym}. 
\end{remark}

\subsection{Isomorphism classes}\label{subsec:class}

We provide next a more amenable description of the set of isomorphism classes of string algebroids, following Definition \ref{def:stringholCourmor}. Since any string algebroid is isomorphic to one of the form $(Q_0,P,\rho_0)$, as in Proposition \ref{prop:strholCour}, we can study the equivalence relation in terms of the triples $(P,H,\theta)$.

\begin{lemma}\label{lemma:strisomorphism}
Let $(P,H,\theta)$ and $(P',H',\theta')$ be as in Proposition \ref{prop:strholCour}. Then, the associated string algebroids $(Q_0,P,\rho_0)$ and $(Q_0',P',\rho_0')$ are isomorphic if and only if there exists an isomorphism $g \colon P \to P'$ of holomorphic principal $G$-bundles such that 
\begin{equation}\label{eq:anomalyiso}
H'  = H - 2c(a,F_\theta) - c(a,d^\theta a) - \frac{1}{3} c(a,[a,a]) - dB
\end{equation}
for some $B\in \Omega^{2,0}$, where $a = g^{-1}\theta' - \theta \in \Omega^{1,0}(\ad \underline P)$.
\end{lemma}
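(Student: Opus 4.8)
The plan is to reduce both implications to an explicit normal form for the Courant algebroid isomorphisms compatible with \eqref{eq:defstringiso}, written in the standard models of Proposition \ref{prop:strholCour}. Suppose $(\varphi,g)$ is an isomorphism from $(Q_0,P,\rho_0)$ to $(Q_0',P',\rho_0')$. Using $g$ to identify $P'\cong P$ we may assume $P'=P$, $g=\Id$, and $\theta' = \theta + a$ with $a = g^{-1}\theta'-\theta\in\Omega^{1,0}(\ad\underline P)$. First I would show that the requirements that $\varphi$ preserve the anchor \eqref{eq:anchor}, cover $g$ through $\rho_0,\rho_0'$ (which by \eqref{eq:rho0} and the horizontal-lift identity $(\theta')^\perp V = \theta^\perp V - i_V a$ fixes the $\ad\underline P$-component), and preserve the pairing \eqref{eq:holpairing} force
\[
\varphi(V+r+\xi) = V + (r + i_V a) + \big(\xi - 2c(a,r) - c(a,i_V a) + i_V B\big)
\]
for a unique $B\in\Omega^{2,0}$; the type of $B$ is fixed by the requirement that the last summand lie in $(T^{1,0}X)^*$. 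This reduces the problem to analysing the remaining two constraints.

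The core of the argument is then to impose that $\varphi$ intertwines the Dolbeault operators $\dbar_0,\dbar_0'$ of \eqref{eq:DolbQbis} and preserves the brackets $[\cdot,\cdot]_0,[\cdot,\cdot]_0'$ of \eqref{eq:bracket}. Matching the $\ad\underline P$-components of $\varphi\circ\dbar_0 = \dbar_0'\circ\varphi$ reproduces $F_{\theta'}^{1,1} = F_\theta^{1,1} + \dbar^\theta a$, which holds automatically since $\theta'=\theta+a$ with $a$ of type $(1,0)$; matching the $(T^{1,0}X)^*$-components yields the $(2,1)$-part
\[
(H')^{2,1} = H^{2,1} - 2c(a,F_\theta^{1,1}) - c(a,\dbar^\theta a) - \dbar B .
\]
Similarly, preservation of the bracket, using the definition \eqref{eq:H30} of $H^{3,0}$ together with the bi-invariance of $c$, yields the $(3,0)$-part
\[
(H')^{3,0} = H^{3,0} - 2c(a,F_\theta^{2,0}) - c(a,\partial^\theta a) - \tfrac13 c(a,[a,a]) - \partial B .
\]
Adding these two identities and recalling $d=\partial+\dbar$, $d^\theta a = \partial^\theta a + \dbar^\theta a$ and $F_\theta = F_\theta^{2,0}+F_\theta^{1,1}$ (as $F_\theta^{0,2}=0$) gives exactly \eqref{eq:anomalyiso}.

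For the converse, given an isomorphism $g\colon P\to P'$ and $B\in\Omega^{2,0}$ satisfying \eqref{eq:anomalyiso}, I would define $\varphi$ by the displayed formula and check directly that it is an isomorphism of holomorphic Courant algebroids making \eqref{eq:defstringiso} commute. Preservation of the pairing and compatibility with the anchor and with $\rho_0,\rho_0'$ hold by construction, while the Dolbeault-intertwining and bracket-preservation conditions are equivalent, by the computation above, to the $(2,1)$- and $(3,0)$-components of \eqref{eq:anomalyiso}, which are precisely the hypotheses. The commuting diagram identifies $(\varphi,g)$ as a morphism of string algebroids in the sense of Definition \ref{def:stringholCourmor}, and it is an isomorphism since $g$ and $\Id_{T^*X}$ are, by the five lemma.

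The main obstacle is the bracket computation producing the $(3,0)$-identity: one must track how the cross terms $c(a,r)$, $c(a,i_V a)$ and $i_V B$ in $\varphi$ interact with every term of \eqref{eq:bracket}, and verify that the purely $a$-dependent contributions organize into the Chern--Simons transgression $2c(a,F_\theta^{2,0}) + c(a,\partial^\theta a) + \tfrac13 c(a,[a,a])$. This relies essentially on the bi-invariance of $c$ to symmetrize the cubic term $c(a,[a,a])$ and to rewrite the $c(\partial^\theta r,\cdot)$-type expressions, and on the integrability relations \eqref{eq:integrab} for $(P,H,\theta)$ and $(P',H',\theta')$ to absorb the curvature terms; this is the analogue in our setting of the transgression underlying the classification of regular Courant algebroids in \cite{ChStXu}.
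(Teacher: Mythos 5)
Your proposal is correct and follows essentially the same route as the paper: both reduce an isomorphism $(\varphi,g)$ to the normal form $\varphi = f_g(B,a)$ forced by orthogonality, the identity on $T^*X$, and compatibility with $\rho_0,\rho_0'$ (which pins down $\nu = g$ and $a = g^{-1}\theta'-\theta$), and then derive \eqref{eq:anomalyiso} from holomorphicity and bracket-preservation. The differences are only presentational: the paper outsources the two key computations to \cite{grt} (Sec.~4.5 for the normal form, the proof of Prop.~4.3 for the Chern--Simons transgression), whereas you carry them out directly---with the correct and clarifying observation that the $(2,1)$- and $(3,0)$-components of \eqref{eq:anomalyiso} arise separately from intertwining the Dolbeault operators \eqref{eq:DolbQbis} and preserving the brackets \eqref{eq:bracket}---and you also spell out the converse direction, which the paper leaves implicit.
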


\begin{proof}
Assume that $(\varphi,g)$ is an isomorphism between $(Q_0,P,\rho_0)$ and $(Q_0',P',\rho_0')$. In particular, $g \colon P \to P'$ is an isomorphism of holomorphic principal bundles, and therefore $g \theta^{0,1} = \theta'^{0,1}$, and
$$
\varphi \colon \underline Q_0 \to \underline Q_0
$$
is an orthogonal bundle isomorphism. Since $\varphi$ is the identity on $(T^{1,0}X)^*$ it follows that (see \cite[Sec. 4.5]{grt})
$$
\varphi = f_\nu(B,a)
$$
where
$$
f_\nu:=\left(
\begin{array}{ccc}
  \Id & 0 & 0 \\
  0 & \nu & 0  \\
  0 & 0 & \Id 
\end{array}
\right).
$$ 
for $\nu \colon \ad \underline{P} \to \ad \underline{P}'$ an orthogonal bundle isomorphism covering the identity on $X$ and preserving the Lie bracket, and
$$
(B,a):=\left(
  \begin{array}{ccc}
    \Id & 0 & 0 \\
    a & \Id & 0 \\
    B-c(a,a)  & -2c(a,\cdot) & \Id 
  \end{array}
\right),
$$ 
for a suitable $B\in \Omega^{2,0}$. More explicitly, $\varphi$ acts on sections of $\underline Q_0$ by
\begin{equation}\label{eq:action-of-the-automorphism}
\begin{split}
\varphi(V+r+\xi) = {} & V + \nu(r+i_V a) \\ & + \xi + i_V (B-c(a,a))-2c(a,r). \end{split}
\end{equation}
The condition $\rho_0^{\theta'} \circ \varphi  = g \circ \rho_0^\theta$ implies, in particular, (see \eqref{eq:rho0})
$$
\rho_0^{\theta'} \circ \varphi(r) = \nu r = gr = g \circ \rho_0^\theta(r)
$$
for all $r \in \ad \underline{P}$, and therefore $\nu = g$. Here, we abuse of the notation and denote by $g$ the isomorphism $\ad \underline{P} \to \ad \underline{P}'$ induced by $dg \colon TP \to TP'$. Furthermore, we have
\begin{align*}
\rho_0^{\theta'} \circ \varphi(V) & = \theta'^\perp(V) + g(i_V a)\\
& = g\theta^\perp(V) + g(i_Va ) + i_V(g\theta - \theta')\\
& = g \circ \rho_0^\theta(V) + i_Vg(a + \theta - g^{-1}\theta')
\end{align*}
for any $V \in T^{1,0}X$, and therefore $\rho_0^{\theta'} \circ \varphi  = g \circ \rho_0^\theta$ implies $a = g^{-1}\theta' - \theta$. Notice that $g \theta^{0,1} = \theta'^{0,1}$ implies $a \in \Omega^{1,0}(\ad \underline P)$, as claimed. Finally, by the proof of \cite[Prop. 4.3]{grt}, the fact that $\varphi$ is holomorphic and bracket preserving implies that \eqref{eq:anomalyiso} is satisfied.
\end{proof}

\begin{remark}\label{rem:notationwedge}
In formula \eqref{eq:anomalyiso} we abuse of the notation and omit the wedge product of differential forms. This simplified notation will be used in the rest of the paper, when there is no possibility of confusion.
\end{remark}

Given an integer $k \geqslant 0$, consider the following subspace of the complex $(k+2)$-forms on $X$
\begin{equation}\label{eq:Omegaleq}
\Omega^{\leqslant k} = \oplus_{j\leqslant k} \Omega^{j+ 2,k-j},
\end{equation}
with the convention that $\Omega^{p,q} = 0$ if $p < 0$ or $q <0$. Explicitly, for $0 \leqslant k \leqslant 2$,
\begin{align*}
\Omega^{\leqslant 0} & = \Omega^{2,0},\\
\Omega^{\leqslant 1} & = \Omega^{3,0} \oplus \Omega^{2,1},\\
\Omega^{\leqslant 2} & = \Omega^{4,0} \oplus \Omega^{3,1} \oplus \Omega^{2,2}.
\end{align*}
Given a holomorphic principal $G$-bundle $P$ over $X$ we denote by $\mathcal{A}_{P}$ the space of connections on $\underline{P}$ such that $F_\theta^{0,2} = 0$ and $P_\theta = P$. As a direct consequence of Proposition \ref{prop:strholCour} and Lemma \ref{lemma:strisomorphism} we obtain the following result.

\begin{proposition}\label{prop:classification}
There is a one to one correspondence between isomorphism classes of string algebroids and elements in
\begin{equation*}\label{eq:lescEind3}
\{(P,H,\theta): (H,\theta) \in \Omega^{\leqslant 1} \times \cA_{P} \; | \; dH + \la F_\theta \wedge F_\theta \ra = 0\}/ \sim,
\end{equation*}
where $(P,H,\theta)\sim (P',H',\theta')$ if there exists an isomorphism $g \colon P \to P'$ of holomorphic principal $G$-bundles such that \eqref{eq:anomalyiso} holds for some $B\in \Omega^{\leqslant 0}$.
\end{proposition}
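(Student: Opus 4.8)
The plan is to assemble Proposition \ref{prop:classification} as a direct corollary of Proposition \ref{prop:strholCour} and Lemma \ref{lemma:strisomorphism}, so the proof is essentially a matter of checking that these two results fit together to give a clean bijection between isomorphism classes and the quotient set. First I would recall the surjectivity onto triples: by Proposition \ref{prop:strholCour}, every string algebroid $(Q,P,\rho)$ is isomorphic to one of the form $(Q_0,P,\rho_0)$ built from a triple $(P,H,\theta)$ with $(H,\theta) \in \Omega^{\leqslant 1} \times \cA_P$ satisfying the integrability condition $dH + c(F_\theta \wedge F_\theta) = 0$ (written here as $dH + \la F_\theta \wedge F_\theta\ra = 0$, identifying the pairing $c$ with $\la \cdot,\cdot\ra$). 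Conversely, every such triple arises from an actual string algebroid, again by the converse part of Proposition \ref{prop:strholCour}. This sets up a surjective map from the space of integrable triples onto the set of isomorphism classes of string algebroids.

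The core of the argument is then to verify that two triples $(P,H,\theta)$ and $(P',H',\theta')$ give rise to isomorphic string algebroids precisely when they are related by the equivalence $\sim$ defined in the statement. This is exactly the content of Lemma \ref{lemma:strisomorphism}: the associated string algebroids $(Q_0,P,\rho_0)$ and $(Q_0',P',\rho_0')$ are isomorphic if and only if there is an isomorphism $g \colon P \to P'$ of holomorphic principal $G$-bundles and a $B \in \Omega^{2,0} = \Omega^{\leqslant 0}$ for which the anomaly relation \eqref{eq:anomalyiso} holds, with $a = g^{-1}\theta' - \theta$. I would simply invoke this equivalence to conclude that the fibers of the surjection constructed above are exactly the $\sim$-equivalence classes, so the induced map on the quotient is a bijection.

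The only genuine bookkeeping step is to confirm that $\sim$ is an honest equivalence relation on the set of integrable triples, so that the quotient in the statement makes sense. Reflexivity is immediate by taking $g = \id$, $a = 0$, $B = 0$. Symmetry and transitivity should follow formally from composing the corresponding isomorphisms $(\varphi,g)$ of string algebroids (Definition \ref{def:stringholCourmor}), since the relation $\sim$ is, by Lemma \ref{lemma:strisomorphism}, literally the statement that such an isomorphism exists; thus $\sim$ inherits the groupoid structure of isomorphisms. I would note explicitly that well-definedness of the correspondence (independence of the choice of isotropic splitting $\lambda$) is already built into Proposition \ref{prop:strholCour}, since any two splittings yield isomorphic models $(Q_0,P,\rho_0)$.

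I do not anticipate a real obstacle here, as the statement is a formal packaging of the two preceding results; the one point requiring minor care is the identification of the parameter $B$ in Lemma \ref{lemma:strisomorphism} (which lives in $\Omega^{2,0}$) with an element of $\Omega^{\leqslant 0}$, and the compatibility of the integrability condition with the equivalence relation, namely that if $(P,H,\theta)$ is integrable and $(P',H',\theta')$ is related to it by \eqref{eq:anomalyiso}, then $(P',H',\theta')$ is automatically integrable as well. This last compatibility is guaranteed because \eqref{eq:anomalyiso} arises from an actual Courant algebroid isomorphism, under which the defining axioms — and hence the integrability condition $dH' + \la F_{\theta'} \wedge F_{\theta'}\ra = 0$ — are preserved; I would remark on this rather than recompute it.
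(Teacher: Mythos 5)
Your proposal is correct and takes essentially the same route as the paper, which states this proposition without separate proof, precisely as a direct consequence of Proposition \ref{prop:strholCour} (surjectivity onto integrable triples via a choice of isotropic splitting, plus the converse construction) and Lemma \ref{lemma:strisomorphism} (isomorphism of the models if and only if the relation \eqref{eq:anomalyiso} holds), with your write-up merely making the routine bookkeeping explicit. One small caveat: your justification of the final compatibility remark is mildly circular, since invoking a Courant algebroid isomorphism between the two models presupposes that both triples are already integrable; this point is in fact not needed because $\sim$ is only considered on the set of integrable triples, and if desired it follows instead from a one-line computation applying $d$ to \eqref{eq:anomalyiso} and using \eqref{eq:diff-of-CS}, the basic form \eqref{eq:CSdiff} with expression \eqref{eq:CSdiffexp}, and the invariance of $c$.
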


\subsection{Automorphisms and the Chern-Simons three-form}\label{subsec:Aut}

Let $(Q,P,\rho)$ be a string algebroid. Let $\Aut Q$ denote the group of automorphisms of the holomorphic Courant algebroid $Q$, that is, automorphisms of the holomorphic vector bundle $Q$ that preserve the bracket and the pairing. Let $\cG_P$ denote the gauge group of $P$, given by holomorphic $G$-bundle automorphisms $g \colon P \to P$ covering the identity on $X$. By Definition \ref{def:stringholCourmor}, the group of automorphisms $\Aut (Q,P,\rho)$ is given by pairs $(\varphi,g) \in \Aut Q \times \cG_P$ such that the following diagram is commutative
\begin{equation}\label{eq:defstrinaut}
    \xymatrix{
      0 \ar[r] & T^*X \ar[r] \ar[d]^{id} & Q \ar[r]^\rho \ar[d]^{\varphi} & A_P \ar[r] \ar[d]^{g} & 0,\\
      0 \ar[r] & T^*X \ar[r] & Q \ar[r]^{\rho} & A_{P} \ar[r] & 0.
    }
\end{equation}
Consequently, there is an homomorphism of groups
\begin{equation}\label{eq:AutQGP}
\Aut (Q,P,\rho) \to \cG_P \colon (\varphi,g) \mapsto g.
\end{equation} 
The rest of this section is devoted to describe the Kernel and the image of \eqref{eq:AutQGP}. For this, we first obtain an explicit characterization of the automorphism group of the string algebroid $(Q_0,P,\rho_0)$ associated to a triple $(P,H,\theta)$, as in Proposition \ref{prop:classification}. Following the proof of Lemma \ref{lemma:strisomorphism}, an automorphism $(\varphi,g)$ of $(Q_0,P,\rho_0)$ is of the form
$$
\varphi = f_g(B,a^g),
$$
for $a^g:= g^{-1}\theta - \theta$, acting on sections of $\underline Q_0$ by
\begin{equation*}
\begin{split}
\varphi(V+r+\xi) = {} & V + g(r+i_V a^g) \\ & + \xi + i_V (B-c(a^g,a^g))-2c(a^g,r). \end{split}
\end{equation*}

\begin{lemma}\label{lemma:Autstring}
Let $(P,H,\theta)$ be as in Proposition \ref{prop:classification}, and $(Q_0,P,\rho_0)$ its associated string algebroid. Then, elements in $\Aut(Q_0,P,\rho_0)$ correspond to transformations $f_g(B,a^g)$, with $g \in \cG_P$, $B \in \Omega^{2,0}$ and $a^g:= g^{-1}\theta - \theta$, which satisfy
\begin{equation*}\label{eq:AutEmathring2}
dB = 2c(a^g,F) + c(a^g,d^\theta a^g) + \frac{1}{3} c(a^g,[a^g,a^g]).
\end{equation*}
The group structure is given by
$$ 
(g,B)(g',B') = (gg',B+B'+c((g'^{-1} \cdot a^g)\wedge a^{g'})),
$$
with Lie algebra $\Lie \; \Aut(Q,P,\rho) \subset \Om^{0}(\ad \underline P) \oplus
\Om^{2,0}$ given by
$$
\Lie \; \Aut(Q,P,\rho) = \{\alpha + b : \; \dbar^\theta \alpha = 0, \; d(b - 2c (\alpha F_\theta)) = 0\}.
$$
\end{lemma}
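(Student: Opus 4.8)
The form $\varphi = f_g(B,a^g)$ of any automorphism, with $g \in \cG_P$ and $a^g = g^{-1}\theta - \theta$, is already recorded in the discussion preceding the statement, so the task reduces to three points: identifying the constraint that ties $B$ to $g$, composing two such maps, and linearising at the identity. For the constraint I would specialise Lemma \ref{lemma:strisomorphism} to the case in which the source and target string algebroids coincide, $(Q_0',P',\rho_0') = (Q_0,P,\rho_0)$, with $g \in \cG_P$. Then $\theta' = \theta$, so $a = a^g$, and imposing $H' = H$ collapses \eqref{eq:anomalyiso} to the single equation relating $dB$ and $a^g$ in the statement. Conversely, Lemma \ref{lemma:strisomorphism} shows that \eqref{eq:anomalyiso} is exactly the holomorphicity-plus-bracket condition, so every pair $(g,B)$ solving this equation defines, through $\varphi = f_g(B,a^g)$, a genuine automorphism; this gives the asserted bijection.

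For the group law I would compose $f_g(B,a^g)$ and $f_{g'}(B',a^{g'})$ directly as orthogonal bundle automorphisms of $\underline{Q}_0$ fixing $(T^{1,0}X)^*$, as in \cite[Sec.~4.5]{grt}. The gauge parts multiply to $gg'$, while the connection data obey the cocycle relation $a^{gg'} = a^{g'} + (g')^{-1}\cdot a^g$, which follows from $(gg')^{-1}\theta = (g')^{-1}(\theta + a^g) = \theta + a^{g'} + (g')^{-1}\cdot a^g$ upon letting $(g')^{-1}$ act by the adjoint representation on the $\ad\underline{P}$-valued one-form $a^g$. Commuting the middle-block factor $g'$ past the unipotent factor carrying $(B,a^g)$ and collecting the bottom-left entries produces the quadratic cross-term $c(((g')^{-1}\cdot a^g)\wedge a^{g'})$, which yields the stated product $(g,B)(g',B') = (gg',\, B + B' + c(((g')^{-1}\cdot a^g)\wedge a^{g'}))$; that the resulting pair again satisfies the constraint of the previous step is automatic, the composite being an automorphism.

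For the Lie algebra I would differentiate a path $(g_t,B_t) = (\exp(t\alpha),\, tb + O(t^2))$ at $t = 0$. Since the identity component of $\cG_P$ consists of \emph{holomorphic} gauge transformations, $\Lie\cG_P = H^0(\ad P) = \{\alpha \in \Omega^0(\ad\underline{P}) : \dbar^\theta\alpha = 0\}$, which gives the first condition and forces the derivative of $a^{g_t}$ at $t=0$, namely $d^\theta\alpha$, to be of type $(1,0)$. Linearising the constraint $dB = 2c(a^g,F) + \cdots$, the terms quadratic and cubic in $a^g$ drop out and leave $db = 2c(d^\theta\alpha,F_\theta)$; the Bianchi identity $d^\theta F_\theta = 0$ together with the invariance of $c$ rewrites this as $db = 2\,d(c(\alpha,F_\theta))$, that is $d(b - 2c(\alpha F_\theta)) = 0$, the second condition.

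The step I expect to require the most care is the group-law computation, where one must commute the gauge factor through the unipotent part and track the off-diagonal contributions so that the adjoint action enters with the exact normalisation making the cross-term equal to $c(((g')^{-1}\cdot a^g)\wedge a^{g'})$. The remaining points follow directly from Lemma \ref{lemma:strisomorphism} and the Bianchi identity.
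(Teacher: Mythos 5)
Your proposal coincides with the paper's own (implicit) argument: the lemma is stated there without a separate proof, being obtained exactly as you do by specializing the proof of Lemma \ref{lemma:strisomorphism} to $P'=P$, $\theta'=\theta$, $H'=H$ (which pins down $\varphi = f_g(B,a^g)$ and the constraint on $dB$), with the composition law computed as in \cite[Sec.~4.5]{grt} via the cocycle identity $a^{gg'} = g'^{-1}\cdot a^g + a^{g'}$ of \eqref{eq:afg-af-ag}, and the Lie algebra by linearization using the Bianchi identity and the invariance of $c$, as in your last step. One bookkeeping caveat worth noting: substituting $H'=H$ and $a=a^g$ into \eqref{eq:anomalyiso} as printed literally yields $dB = -\bigl(2c(a^g,F_\theta)+c(a^g,d^\theta a^g)+\tfrac{1}{3}c(a^g,[a^g,a^g])\bigr)$, which matches the lemma's constraint only after the harmless reparametrization $B\mapsto -B$ (compare \eqref{eq:anomaly}, where the exact term enters with the opposite sign), so this is a sign-convention wrinkle inherited from the source rather than a gap in your method.
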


As an immediate consequence of Lemma \ref{lemma:Autstring} we obtain that $\Aut (Q,P,\rho)$ fits into an exact sequence of groups
\begin{equation*}
    \label{eq:AutW}
    \xymatrix{
      0 \ar[r] & \Omega^{2,0}_{cl} \ar[r] & \Aut (Q,P,\rho) \ar[r] & \cG_P\\
    }
\end{equation*}
where $\Omega^{2,0}_{cl}$ denotes the space of closed $(2,0)$-forms on $X$. Our next goal is to characterize the image of \eqref{eq:AutQGP}. We start by recalling some generalities about the Chern-Simons three-form. Let $\underline{P}$ be the smooth principal $G$-bundle over $X$ underlying $P$. Given a connection $\theta$ on $\underline{P}$, the Chern-Simons three-form of $\theta$ is a $G$-invariant differential form of degree three on the total space of $\underline{P}$ defined by (see Remark \ref{rem:notationwedge})
\begin{equation*}\label{eq:CS}
CS(\theta) = -  \frac{1}{6}c(\theta,[\theta,\theta]) + c(F_\theta , \theta) \in \Omega^3(\underline{P}),
\end{equation*}
where $F_\theta \in \Omega^2(\ad \underline{P})$ is the curvature of $\theta$. Note that
\begin{equation}\label{eq:diff-of-CS}
dCS(\theta)=c(F_\theta\wedge F_\theta),
\end{equation}
and that $CS(\theta)$ is functorial, in the sense that
\begin{equation}\label{eq:CSfunctorial}
g^*CS(\theta) = CS(g^* \theta)=CS(g^{-1}\theta)
\end{equation}
for any automorphism $g$ of $\underline P$. Recall that, given another connection $\theta' = \theta + a$ on $\underline{P}$, the following defines a basic three-form on the principal bundle $\underline{P}$:
\begin{equation}\label{eq:CSdiff}
CS(\theta') - CS(\theta) - dc(\theta' \wedge \theta). 
\end{equation}
By setting $a = \theta' - \theta \in \Omega^1(\ad \underline{P})$, this form is the pullback to $\underline{P}$ of
\begin{equation}\label{eq:CSdiffexp}
2c(a \wedge F_\theta) + c(a \wedge d^\theta a) + \frac{1}{3}c(a,[a,a]) \in \Omega^3
\end{equation}
via the natural projection $\underline{P} \to X$. In particular, following the notation in Proposition \ref{prop:classification}, if $\theta, \theta' \in \cA_P$ we have that \eqref{eq:CSdiffexp} lies in $\Omega^{\leqslant 1}$.

Consider now the complex defined by \eqref{eq:Omegaleq} and the usual exterior de Rham differential
\begin{equation}\label{eq:cxleqk}
  \xymatrix@R-2pc{
\ldots \ar[r]^d & \Omega^{\leqslant k} \ar[r]^d & \Omega^{\leqslant k+1} \ar[r]^d & \ldots
}
\end{equation}
It is easy to verify that $(\Omega^{\leqslant\bullet},d)$ is elliptic. We denote by $H^k(\Omega^{\leqslant\bullet})$ its $k$-th cohomology group.

\begin{lemma}\label{lemma:sigmaP}
Let $P$ be a holomorphic principal $G$-bundle over $X$. Then, there is an homomorphism of groups
\begin{equation}\label{eq:sigmaP}
\sigma_P \colon \cG_P \to H^1(\Omega^{\leqslant\bullet})
\end{equation}
defined by 
$$
\sigma_P(g) = [CS(g \theta) - CS(\theta) - d c (g \theta \wedge \theta)] \in H^1(\Omega^{\leqslant\bullet}),
$$
for any choice of connection $\theta \in \cA_P$.
\end{lemma}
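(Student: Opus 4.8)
The plan is to establish, in turn, that the proposed representative is a well-defined cocycle of the complex $(\Omega^{\leqslant\bullet},d)$, that its class is independent of $\theta\in\cA_P$, and that $g\mapsto\sigma_P(g)$ is additive. Throughout I write $\Psi(\theta',\theta):=CS(\theta')-CS(\theta)-dc(\theta'\wedge\theta)$, so that the proposed representative of $\sigma_P(g)$ is $\Psi(g\theta,\theta)$, and I use repeatedly that a \emph{holomorphic} gauge transformation preserves the Dolbeault operator: hence $g\theta\in\cA_P$ and $a:=g\theta-\theta\in\Omega^{1,0}(\ad\underline{P})$, and more generally any two connections of $\cA_P$ differ by a $(1,0)$-form. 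For well-definedness I would first invoke \eqref{eq:CSdiff}--\eqref{eq:CSdiffexp} with $\theta'=g\theta$ to identify $\Psi(g\theta,\theta)$ with the pullback to $X$ of $2c(a\wedge F_\theta)+c(a\wedge d^\theta a)+\frac{1}{3}c(a,[a,a])$; since $a\in\Omega^{1,0}$ and $F_\theta^{0,2}=0$, a type count places every summand in $\Omega^{3,0}\oplus\Omega^{2,1}=\Omega^{\leqslant 1}$. Using \eqref{eq:diff-of-CS} together with $F_{g\theta}=\Ad_g F_\theta$ and the $\Ad$-invariance of $c$,
$$
d\Psi(g\theta,\theta)=c(F_{g\theta}\wedge F_{g\theta})-c(F_\theta\wedge F_\theta)=0,
$$
so $\Psi(g\theta,\theta)$ is genuinely closed and, lying in $\Omega^{\leqslant 1}$, defines a class in $H^1(\Omega^{\leqslant\bullet})$.

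The heart of the argument is a pair of identities. The first is a \emph{cocycle identity}: for $\theta_0,\theta_1,\theta_2\in\cA_P$,
$$
\Psi(\theta_2,\theta_0)-\Psi(\theta_2,\theta_1)-\Psi(\theta_1,\theta_0)=d\,c\big((\theta_2-\theta_0)\wedge(\theta_1-\theta_0)\big),
$$
obtained directly from the definition of $\Psi$ after simplifying $c(\theta_2\wedge\theta_1)+c(\theta_1\wedge\theta_0)-c(\theta_2\wedge\theta_0)$ by means of the antisymmetry $c(\alpha\wedge\beta)=-c(\beta\wedge\alpha)$ valid for $\ad$-valued one-forms. Its right-hand side lies in $d\Omega^{2,0}=d\Omega^{\leqslant 0}$, because the differences are $(1,0)$-forms. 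The second is the \emph{gauge-invariance} $\Psi(g\theta_2,g\theta_1)=\Psi(\theta_2,\theta_1)$, which follows from $F_{g\theta}=\Ad_g F_\theta$, $d^{g\theta}\circ\Ad_g=\Ad_g\circ d^\theta$, the affine relation $g\theta_2-g\theta_1=\Ad_g(\theta_2-\theta_1)$, and the $\Ad$-invariance of $c$ (equivalently, from \eqref{eq:CSfunctorial}).

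Granting these, the remaining two points are formal. For independence of $\theta$, I apply the cocycle identity to the triples $(g\theta',g\theta,\theta')$ and $(g\theta,\theta,\theta')$; using $\Psi(g\theta',g\theta)=\Psi(\theta',\theta)$ and the antisymmetry $\Psi(\theta',\theta)=-\Psi(\theta,\theta')$, all correction terms lie in $d\Omega^{\leqslant 0}$ and one finds $\Psi(g\theta',\theta')-\Psi(g\theta,\theta)\in d\Omega^{\leqslant 0}$, so the class is unchanged. For additivity, since the gauge action is a left action one has $(g_1g_2)\theta=g_1(g_2\theta)$, and the cocycle identity on $(g_1g_2\theta,g_1\theta,\theta)$ gives
$$
\Psi(g_1g_2\theta,\theta)=\Psi(g_1g_2\theta,g_1\theta)+\Psi(g_1\theta,\theta)+d\beta,\qquad \beta\in\Omega^{\leqslant 0};
$$
gauge-invariance turns $\Psi(g_1g_2\theta,g_1\theta)$ into $\Psi(g_2\theta,\theta)$, whence $\sigma_P(g_1g_2)=\sigma_P(g_1)+\sigma_P(g_2)$.

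The step I expect to be the main obstacle is pinning down the cocycle identity with a correction term of the correct type. The content is that, although each $c(\theta_i\wedge\theta_j)$ is only a form on the total space $\underline{P}$, the particular combination collapses --- thanks to the one-form antisymmetry of $c$, which annihilates $c(\theta_0\wedge\theta_0)$ and the cross terms $c(\theta_0\wedge a)+c(a\wedge\theta_0)$ --- to the single basic term $c\big((\theta_2-\theta_0)\wedge(\theta_1-\theta_0)\big)$ descending to $X$; and that this term is of type $(2,0)$, hence exact in $\Omega^{\leqslant\bullet}$, precisely because connections in $\cA_P$ differ by $(1,0)$-forms. Once this identity and the $\Ad$-invariance are in place, independence of $\theta$ and the homomorphism property follow without further computation.
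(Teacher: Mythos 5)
Your proof is correct and follows essentially the same route as the paper: both rest on the Chern--Simons difference being basic and descending to $X$ (\eqref{eq:CSdiff}--\eqref{eq:CSdiffexp}), on functoriality \eqref{eq:CSfunctorial}, and on the fact that connections in $\cA_P$ differ by $(1,0)$-forms so every correction term lands in $d\Omega^{\leqslant 0}$. Your cocycle identity for $\Psi$ simply repackages the paper's direct computations (its use of \eqref{eq:afg-af-ag} for the product and of functoriality for independence of $\theta$) into one formal device from which both properties follow; the mathematical content is identical.
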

\begin{proof}
For a choice of connection $\theta \in \mathcal{A}_{P}$, given $g \in \cG_P$ we can associate a closed three-form on $X$ (see \eqref{eq:diff-of-CS}):
$$
\sigma^\theta(g) = CS(g \theta) - CS(\theta) - d c (g \theta \wedge \theta) \in \Omega^{\leqslant 1}.
$$
Given $\theta' \in \mathcal{A}_{P}$, functoriality of the Chern-Simons three-form \eqref{eq:CSfunctorial} gives
$$
\sigma^{\theta'}(g) - \sigma^\theta(g) = d c (g \theta^{1,0} \wedge \theta^{1,0}) - d c (g \theta'^{1,0} \wedge \theta'^{1,0}) \in d \Omega^{\leqslant 0},
$$
and therefore the class of $\sigma^\theta(g)$ in $H^1(\Omega^{\leqslant \bullet})$ is independent of the choice of connection $\theta$. Finally,
\begin{align*}
\sigma^\theta(gg') 
& = g_*(CS(g' \theta) - CS(\theta) - d c (g' \theta \wedge \theta))\\
& \phantom{ {} = } + (CS(g \theta) - CS(\theta) - d c (g\theta \wedge \theta))\\
& \phantom{ {} = } - d c (gg' \theta \wedge \theta) + d c (g' \theta \wedge \theta) + d c (g \theta \wedge \theta)\\
& = \sigma^\theta(g) + \sigma^\theta(g) + dB
\end{align*}
for $B \in \Omega^{\leqslant 0}$. For the first equality we have used that \eqref{eq:CSdiff} is basic, and therefore invariant by pullback by elements $g \in \cG_P$. The second equality follows by \eqref{eq:afg-af-ag} and the functoriality of the Chern-Simons three-form \eqref{eq:CSfunctorial}.
\end{proof}

We are now ready to state the main result of this section. The proof is a straightforward consequence of Lemma \ref{lemma:Autstring} and Lemma \ref{lemma:sigmaP}.

\begin{proposition}\label{prop:AutQ}
There is an exact sequence of groups
\begin{equation*}
    \label{eq:AutW}
    \xymatrix{
      0 \ar[r] & \Omega^{2,0}_{cl} \ar[r] & \Aut (Q,P,\rho) \ar[r] & \Ker \sigma_P \ar[r] & 1
    }
\end{equation*}
where $\Omega^{2,0}_{cl}$ denotes the space of closed $(2,0)$-forms on $X$.
\end{proposition}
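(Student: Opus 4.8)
The plan is to reduce everything to the triple description $(P,H,\theta)$ and the explicit characterization of automorphisms in Lemma \ref{lemma:Autstring}, so that the exact sequence becomes a statement about which gauge transformations $g$ can be completed to an automorphism $(\varphi,g)$. Since $(Q,P,\rho)\cong(Q_0,P,\rho_0)$ and the isomorphism \eqref{eq:Q0} is compatible with the maps to $\cG_P$, I may work with $(Q_0,P,\rho_0)$ throughout. By Lemma \ref{lemma:Autstring} an automorphism is a pair $(g,B)$ with $g\in\cG_P$, $B\in\Omega^{2,0}$ and $a^g=g^{-1}\theta-\theta$, subject to
$$
dB = 2c(a^g,F_\theta) + c(a^g,d^\theta a^g) + \tfrac{1}{3}c(a^g,[a^g,a^g]),
$$
and the homomorphism \eqref{eq:AutQGP} is simply $(g,B)\mapsto g$, which is a group homomorphism by the product rule recorded in Lemma \ref{lemma:Autstring}.

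First I would establish exactness on the left. Setting $g=\mathrm{id}$ forces $a^g=0$, so the constraint reduces to $dB=0$, i.e. $B\in\Omega^{2,0}_{cl}$; conversely every such $B$ defines an automorphism, acting by $V+r+\xi\mapsto V+r+\xi+i_VB$ on $\underline Q_0$ via \eqref{eq:action-of-the-automorphism}. This identifies the kernel of \eqref{eq:AutQGP} with $\Omega^{2,0}_{cl}$ and produces the injection $\Omega^{2,0}_{cl}\hookrightarrow\Aut(Q,P,\rho)$, which is injective since distinct $B$ act differently. The induced group law on this kernel is additive because $a^{\mathrm{id}}=0$, so $\Omega^{2,0}_{cl}$ sits as an abelian subgroup as required.

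The substantive step is to identify the image of \eqref{eq:AutQGP} with $\Ker\sigma_P$. Here I would observe that the right-hand side of the automorphism constraint is precisely the expression \eqref{eq:CSdiffexp} evaluated at $a=a^g=g^{-1}\theta-\theta$, hence equals the descended closed three-form $CS(g^{-1}\theta)-CS(\theta)-dc(g^{-1}\theta\wedge\theta)=\sigma^\theta(g^{-1})\in\Omega^{\leqslant 1}$, in the notation of the proof of Lemma \ref{lemma:sigmaP}. Therefore $g$ lifts to an automorphism if and only if there exists $B\in\Omega^{2,0}=\Omega^{\leqslant 0}$ with $dB=\sigma^\theta(g^{-1})$, that is, if and only if the class of $\sigma^\theta(g^{-1})$ in $H^1(\Omega^{\leqslant\bullet})$ vanishes, i.e. $\sigma_P(g^{-1})=0$. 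Since $\sigma_P$ is a group homomorphism by Lemma \ref{lemma:sigmaP}, $\Ker\sigma_P$ is a subgroup and $\sigma_P(g^{-1})=0\Leftrightarrow g\in\Ker\sigma_P$; this shows the image of \eqref{eq:AutQGP} is exactly $\Ker\sigma_P$, giving surjectivity onto $\Ker\sigma_P$ and exactness in the middle.

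The only point requiring care is the matching of conventions in the previous paragraph: one must check that the solvability of $dB=\sigma^\theta(g^{-1})$ depends only on the cohomology class in $H^1(\Omega^{\leqslant\bullet})$ and not on the auxiliary connection $\theta\in\cA_P$ (equivalently, on the splitting $\lambda$), and that passing from $g^{-1}$ to $g$ is harmless. Both are already supplied: the independence of $\theta$ is the computation inside the proof of Lemma \ref{lemma:sigmaP}, and the insensitivity to inversion follows from $\Ker\sigma_P$ being a subgroup. Granting Lemma \ref{lemma:Autstring} and Lemma \ref{lemma:sigmaP}, no further analytic input is needed and the three exactness verifications above complete the proof.
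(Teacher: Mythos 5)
Your proposal is correct and follows exactly the route the paper intends: the paper states Proposition \ref{prop:AutQ} as ``a straightforward consequence of Lemma \ref{lemma:Autstring} and Lemma \ref{lemma:sigmaP}'' without spelling out details, and your argument is precisely that deduction made explicit (kernel computation via $g=\mathrm{id}$ forcing $dB=0$, and identification of the image via matching the constraint in Lemma \ref{lemma:Autstring} with \eqref{eq:CSdiffexp} so that liftability of $g$ becomes solvability of $dB=\sigma^\theta(g^{-1})$ in $\Omega^{\leqslant 0}$, i.e.\ vanishing of $\sigma_P(g)$). Your attention to the independence of the auxiliary connection and to the harmlessness of the inversion $g\mapsto g^{-1}$ is exactly the right bookkeeping, both points being covered by Lemma \ref{lemma:sigmaP}.
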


\begin{remark}
The group of automorphisms of a string algebroid is the analogue in the holomorphic category of the group $\mathring{\Aut} \; E$ studied in \cite[Cor. 4.2]{grt} (for automorphisms covering the identity on the base).
\end{remark}

\section{Classification}\label{sec:holomorphiccase}

Let $G$ be a complex Lie group. We assume that the Lie algebra $\mathfrak{g}$ of $G$ admits a bi-invariant symmetric bilinear form $c \colon \mathfrak{g} \otimes \mathfrak{g} \to \CC$. Let $X$ be a complex manifold. In this section we obtain our classification of string algebroids in terms of the first \v Cech cohomology of a natural holomorphic sheaf of non-abelian groups.

\subsection{The sheaf $\cS$}
\label{sec:Chern-Simons-sheaf}


Let $U \subset X$ be an open subset of $X$, regarded as a complex manifold. Let $P_0 = U \times G$ be the holomorphically trivial principal $G$-bundle on $U$. We denote by $\theta_0$ the trivial connection in the underlying smooth principal $G$-bundle $\underline{P}_0$. Consider the string algebroid $(Q_0,P_0,\rho_0)$ (see Definition \ref{def:stringholCour} and Proposition \ref{prop:strholCour}) given by the holomorphic Courant algebroid
$$
Q_0 = TU \oplus \mathfrak{g} \oplus T^*U
$$
with pairing
\begin{equation*}\label{eq:smpairing}
\langle V + r + \xi , V + r + \xi \rangle = \xi(V) + c(r,r),
\end{equation*}
anchor map
$$
\pi_{Q_0} (V + r + \xi) = V,
$$
and Dorfman bracket
\begin{equation*}\label{eq:brackethol}
\begin{split}
[V+ r + \xi,W + t + \eta] & {} = [V,W] + i_V \partial^{\theta_0} t - i_W \partial^{\theta_0} r - [r,t]\\
& {} \phantom{=}  {} + L_V \eta  - i_W d \xi + 2c(d^{\theta_0} r,t),
\end{split}
\end{equation*}
where $V,W$ are holomorphic sections of $TU$, $r,t$ are $\mathfrak{g}$-valued holomorphic maps, and $\xi,\eta$ are holomorphic $(1,0)$-forms on $U$, and map $\rho_0$ given by
$$
\rho_0(V + r +\xi) = V + r.
$$

Let $\cO_{G}(U)$ denote the space of $G$-valued holomorphic maps on $U$. Let $\Omega^{2,0}(U)$ denote the space of $(2,0)$-forms on $U$. Given $g \in \cO_{G}(U)$, we denote
$$
a^g = g^{-1} \theta_0 - \theta_0 = g^{-1} dg = g^{-1}\partial g \in \Omega^{1,0}(U,\mathfrak{g}),
$$
where $g^{-1} \theta_0$ is the connection given by the action of $g^{-1}$ on $\theta_0$. Given another $g' \in \mathcal{O}_G(U)$, denote by $g^{-1}a^{g'}$ the action of $g^{-1}$ on $a^{g'}$ induced by the adjoint representation.  Note that
\begin{align}\label{eq:afg-af-ag}
fa^f&=-a^{f^{-1}}, &  a^{fg} &= g^{-1}a^f + a^g, & a^{fgh}&=h^{-1}g^{-1}a^f + h^{-1}a^g + a^h.
\end{align}

Our next result, which is a straightforward consequence of Lemma \ref{lemma:Autstring}, describes the group of automorphisms $\Aut (Q_0,P_0,\rho_0)$. Here we give a different proof which does not rely on the results in \cite{grt}.

\begin{lemma}\label{lem:AutQ0hol}
The set of pairs $(g,B) \in \cO_{G}(U) \times \Omega^{2,0}(U)$ satisfying
\begin{equation}\label{eq:dBCShol}
dB = CS(g^{-1}\theta_0) - CS(\theta_0) - dc(g^{-1} \theta_0 \wedge \theta_0)
\end{equation}
with the product given by
\begin{equation}\label{eq:groupproduct}
(g_1,B_1)(g_2,B_2) = (g_1g_2, B_1 + B_2 + c(g_2^{-1}a^{g_1} \wedge a^{g_2})).
\end{equation} 
form a group, which we denote by $\mathcal{S}(U)$. Furthermore, there is a canonical identification $\cS(U) = \Aut (Q_0,P_0,\rho_0)$.
\end{lemma}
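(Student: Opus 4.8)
The plan is to exhibit an explicit bijection $\Psi\colon\cS(U)\to\Aut(Q_0,P_0,\rho_0)$ and to check that it carries the product \eqref{eq:groupproduct} to composition of automorphisms; since automorphisms form a group under composition, this proves at once that $\cS(U)$ is a group and that $\Psi$ is the asserted identification. I would first determine the shape of an arbitrary $(\varphi,g)\in\Aut(Q_0,P_0,\rho_0)$. The commutative diagram \eqref{eq:defstrinaut} forces $\varphi$ to fix $T^*U$ and to cover $g\in\cG_{P_0}=\cO_G(U)$, and together with $\pi_{Q_0}\circ\varphi=\pi_{Q_0}$ (which holds because $dp_0\circ\rho_0=\pi_{Q_0}$ and $g$ covers the identity) this makes $\varphi$ lower triangular for $TU\oplus\mathfrak g\oplus T^*U$, equal to the identity on the $TU$- and $T^*U$-summands. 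Compatibility with $\rho_0$ then fixes the middle block to be $\Ad_g$ and forces the $TU$-to-$\mathfrak g$ coupling to be given by $a^g=g^{-1}\partial g$, while preservation of the pairing \eqref{eq:holpairing} forces the $\mathfrak g\to T^*U$ block to be $-2c(a^g,\cdot)$ and the symmetric part of the $TU\to T^*U$ block to be $-c(a^g,a^g)$, leaving its antisymmetric part $B\in\Omega^{2,0}(U)$ free. Thus $\varphi=f_g(B,a^g)$ acts by $\varphi(V+r+\xi)=V+\Ad_g(r+i_Va^g)+\xi+i_V(B-c(a^g,a^g))-2c(a^g,r)$, and $(\varphi,g)$ is encoded by the pair $(g,B)\in\cO_G(U)\times\Omega^{2,0}(U)$.

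Next I would identify exactly which pairs give automorphisms, the remaining conditions on $f_g(B,a^g)$ being holomorphicity and bracket-preservation (compatibility with $\rho_0$ and with the pairing holding by construction). Holomorphicity means $[\dbar_0,\varphi]=0$; by \eqref{eq:DolbQbis} with $F_{\theta_0}=0$ and $H^{2,1}=0$ the operator $\dbar_0$ is the diagonal $\dbar$, so this says the entries of $\varphi$ are $\dbar$-closed. The entries built from $a^g=g^{-1}\partial g$ are automatically $\dbar$-closed because $g$ is holomorphic, so that $\dbar a^g=0$ and $\dbar(\Ad_g a^g)=0$; hence holomorphicity reduces to the single condition $\dbar B=0$. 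Checking bracket-preservation against the explicit Dorfman bracket of $Q_0$ yields the remaining equation $\partial B=c(a^g,\partial a^g)+\frac13 c(a^g,[a^g,a^g])$. As $a^g$ is holomorphic of type $(1,0)$, this right-hand side is of pure type $(3,0)$, so the two conditions combine into $dB=c(a^g,da^g)+\frac13 c(a^g,[a^g,a^g])$; by the Chern-Simons identity \eqref{eq:CSdiffexp} with $\theta=\theta_0$, $\theta'=g^{-1}\theta_0$, $a^g=g^{-1}\theta_0-\theta_0$, and $F_{\theta_0}=0$, $d^{\theta_0}=d$, this is precisely \eqref{eq:dBCShol}. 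This shows that $\Psi\colon(g,B)\mapsto(f_g(B,a^g),g)$ is a bijection of $\cS(U)$ onto $\Aut(Q_0,P_0,\rho_0)$.

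Finally I would match the products. Composing $f_{g_1}(B_1,a^{g_1})$ with $f_{g_2}(B_2,a^{g_2})$ on $V+r+\xi$ via the explicit action above, the $\mathfrak g$-component reproduces $\Ad_{g_1g_2}$ together with the cocycle relation $a^{g_1g_2}=g_2^{-1}a^{g_1}+a^{g_2}$ of \eqref{eq:afg-af-ag}; invariance of $c$ under $\Ad_{g_2}$ rewrites the mixed terms, and splitting the surviving cross term $c(g_2^{-1}a^{g_1},a^{g_2})$ into symmetric and antisymmetric parts identifies the $T^*U$-component with that of $f_{g_1g_2}(B_1+B_2+c(g_2^{-1}a^{g_1}\wedge a^{g_2}),a^{g_1g_2})$. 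This is exactly \eqref{eq:groupproduct}, so $\Psi$ is a group isomorphism; as $\Aut(Q_0,P_0,\rho_0)$ is a group under composition, so is $\cS(U)$, with the claimed identification $\cS(U)=\Aut(Q_0,P_0,\rho_0)$.

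The main obstacle is the bracket-preservation computation producing $\partial B=c(a^g,\partial a^g)+\frac13 c(a^g,[a^g,a^g])$: expanding $[f_g(B,a^g)u,f_g(B,a^g)v]_0$ against $f_g(B,a^g)[u,v]_0$ is long, and one must collect the terms quadratic and cubic in $a^g$ into the Chern-Simons density while tracking the antisymmetrization signs in the $T^*U$-slot. To organize this I would factor $f_g(B,a^g)=f_e(B,0)\circ f_g(0,a^g)$ (a direct check on sections), treating separately the pure $B$-field transform $f_e(B,0)$, whose obstruction to preserving the Dorfman bracket is $dB$, and the gauge part $f_g(0,a^g)$, whose obstruction is the Chern-Simons three-form; the composite preserves the bracket precisely when these cancel, i.e. under \eqref{eq:dBCShol}. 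This bookkeeping also cross-checks the kernel $\Omega^{2,0}_{cl}$ appearing in Proposition \ref{prop:AutQ}, recovered at $g=e$.
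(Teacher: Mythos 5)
Your proposal is correct, but it runs the argument in the opposite direction from the paper. The paper first verifies \emph{directly} that the relation \eqref{eq:dBCShol} is closed under the product \eqref{eq:groupproduct}: it applies $d$ to $B_1+B_2+c(g_2^{-1}a^{g_1}\wedge a^{g_2})$ and, using that the difference \eqref{eq:CSdiff} is basic (hence pullback-invariant under $g_2$), together with the cocycle identities \eqref{eq:afg-af-ag} and the functoriality \eqref{eq:CSfunctorial}, recovers the relation for $g_1g_2$; the identification $\cS(U)=\Aut(Q_0,P_0,\rho_0)$ is then delegated to Lemma \ref{lemma:Autstring}, whose proof (via Lemma \ref{lemma:strisomorphism}) rests on \cite[Prop.~4.3]{grt} for bracket preservation. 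You instead build the bijection $\Psi$ with the automorphism group from scratch in the trivial local model --- diagram chase, orthogonality, the diagonal form of $\dbar_0$ when $F_{\theta_0}=0$ and $H^{2,1}=0$, and the bracket computation organized through the factorization $f_g(B,a^g)=f_e(B,0)\circ f_g(0,a^g)$ --- and then obtain closure under \eqref{eq:groupproduct} and the group axioms for free by transport of structure from composition of automorphisms. Both routes are sound, and your signs check out: with $d^{\theta_0}=d$, $F_{\theta_0}=0$ and the Maurer--Cartan equation $d^{\theta_0}a^g=-\tfrac{1}{2}[a^g,a^g]$, your condition $\partial B=c(a^g,\partial a^g)+\tfrac{1}{3}c(a^g,[a^g,a^g])$ equals the $\partial B=-\tfrac{1}{6}c(a^g,[a^g,a^g])$ of \eqref{eq:dBCSexphol}, and \eqref{eq:CSdiffexp} converts the combined condition into \eqref{eq:dBCShol}. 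What the paper's computation buys is an intrinsic verification that $\cS$ is a sheaf of groups, independent of the Courant-algebroid interpretation --- essentially the same manipulation is recycled for $\sigma_P$ in Lemma \ref{lemma:sigmaP} --- while what your route buys is self-containedness: it avoids Lemma \ref{lemma:Autstring} and hence \cite{grt} entirely, in the spirit of the paper's own remark that it wishes to give a proof not relying on \cite{grt}, at the price of carrying out in full the bracket-preservation check that the paper outsources. The one convention you must pin down when writing this up is the order of composition: $\Psi(g_1,B_1)\circ\Psi(g_2,B_2)$, with $\varphi_2$ applied first, has to produce the cross term $c(g_2^{-1}a^{g_1}\wedge a^{g_2})$ in exactly that order, which your use of $a^{g_1g_2}=g_2^{-1}a^{g_1}+a^{g_2}$ from \eqref{eq:afg-af-ag}, together with $\Ad$-invariance of $c$, handles correctly.
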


\begin{proof}
The set $(g,B) \in \cO_{G}(U) \times \Omega^{2,0}(U)$ is clearly closed under the product \eqref{eq:groupproduct}. We only need to check that the relation \eqref{eq:dBCShol} is compatible with \eqref{eq:groupproduct}. For this, we note that
\begin{align*}
d(B_1 + B_2 + c(g_2^{-1}a^{g_1} \wedge a^{g_2}))  &  {} = g_2^*(CS(g_1^{-1}\theta_0) - CS(\theta_0) - dc(a^{g_1} \wedge \theta_0))\\
& \phantom{{} =} + CS(g_2^{-1}\theta_0) - CS(\theta_0) - dc(a^{g_2} \wedge \theta_0)\\
& \phantom{{} =} + d c(a^{g_1g_2} \wedge a^{g_2}))\\
& {} = CS((g_1g_2)^{-1}\theta_0) - CS(\theta_0) - d c(a^{g_1g_2} \wedge \theta_0).
\end{align*}
For the first equality we have used that \eqref{eq:CSdiff} is basic, and therefore invariant by pullback by elements $g \in \mathcal{O}_G(U)$. The second equality follows by \eqref{eq:afg-af-ag} and the functoriality of the Chern-Simons three-form \eqref{eq:CSfunctorial}.
The last part of the statement is straightforward from Lemma \ref{lemma:Autstring}. 
\end{proof}

Consider $(g,B) \in \mathcal{S}(U)$. By using \eqref{eq:CSdiffexp}, the Maurer-Cartan equation
$$
d^{\theta_0}a^g = - \frac{1}{2}[a^g,a^g],
$$ and the fact that $g$ is holomorphic, condition \eqref{eq:dBCShol} can be written as
\begin{align}\label{eq:dBCSexphol}
\dbar B & {} = 0,\nonumber \\
\partial B & {} = - \frac{1}{6}c(a^g,[a^g,a^g]),
\end{align}
and therefore, in particular, $B$ is a holomorphic $(2,0)$-form. The second equation in \eqref{eq:dBCShol} can be alternatively written as
\begin{equation*}\label{eq:Severa}
\partial B = g^*\sigma_3,
\end{equation*}
where $\sigma_3 := - \frac{1}{6}c(\omega,[\omega,\omega])$ is the holomorphic Cartan $(3,0)$-form on $G$, defined in terms of the $\mathfrak{g}$-valued Maurer-Cartan holomorphic $(1,0)$-form $\omega = g^{-1}\del g$. Thus, elements in $\cS(U)$ correspond to (holomorphic) \emph{inner automorphisms} of $Q_0$ in the sense of \v Severa \cite{Severa}.
	



The family of complex groups $\cS(U)$, as $U$ varies in open subsets of the complex manifold $X$, defines a sheaf of (possibly non-abelian) complex groups $\cS$, canonically attached to the Lie group $G$ and the bilinear form $c$. It follows from Lemma \ref{lem:AutQ0hol} that one-cocycles in the \v Cech cohomology of $\cS$ can be regarded as gluing data for the construction of string algebroids over $X$ with structure group $G$. As we will see in Section \ref{subsec:holstringclass}, this cohomology classifies string algebroids up to isomorphism.

\subsection{ First cohomology of $\cS$}\label{subsec:1stcoho}

The goal of this section is to understand the \v Cech cohomology of the sheaf $\cS$, as defined in the previous section. Recall that, being a sheaf of non-abelian groups, in general its cohomology is only defined in degree $1$ and it has the structure of a pointed set. Thus, our next goal is to compute the first \v Cech cohomology group for the sheaf $\cS$. The first result shows that $\cS$ is locally modelled on an abelian extension of the group of $G$-valued holomorphic maps by the additive group of closed $(2,0)$-forms on $X$.

\begin{lemma}\label{lemma:exact-sequence-holomorphic}
Let $\Omega^{2,0}_{cl}$ be the sheaf of closed $(2,0)$-forms on $X$.  There is a short exact sequence of sheaves of groups
\begin{equation}\label{eq:sescE}
0 \to \Omega^{2,0}_{cl} \to \cS \to \cO_{G} \to 1.
\end{equation}
\end{lemma}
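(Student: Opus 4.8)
The plan is to construct the short exact sequence \eqref{eq:sescE} directly from the group structure on $\cS(U)$ exhibited in Lemma \ref{lem:AutQ0hol}. Recall that $\cS(U)$ consists of pairs $(g,B) \in \cO_G(U) \times \Omega^{2,0}(U)$ satisfying the constraint \eqref{eq:dBCShol}, equivalently the pair of equations \eqref{eq:dBCSexphol}. The first equation $\dbar B = 0$ together with the second forces $B$ to be holomorphic; in the special case $g = 1$ we have $a^g = 0$, so the constraint reduces to $dB = 0$, meaning the fiber over the identity of $\cO_G$ is exactly the group of closed $(2,0)$-forms $\Omega^{2,0}_{cl}(U)$. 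This identifies the candidate maps: the inclusion $\Omega^{2,0}_{cl} \to \cS$ sends $B \mapsto (1,B)$, and the projection $\cS \to \cO_G$ sends $(g,B) \mapsto g$.

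The key steps I would carry out are as follows. First I would verify that $B \mapsto (1,B)$ is a group homomorphism: from \eqref{eq:groupproduct}, the product $(1,B_1)(1,B_2) = (1, B_1 + B_2 + c(a^{1} \wedge a^{1}))$, and since $a^1 = 1^{-1}\partial 1 = 0$, this is just $(1, B_1+B_2)$, so the inclusion is additive. Next I would check that the projection $(g,B) \mapsto g$ is a homomorphism, which is immediate from the first component of \eqref{eq:groupproduct}. Exactness at $\cS$ is clear since the kernel of the projection is precisely the pairs with $g=1$, which we identified as $\Omega^{2,0}_{cl}$. The only substantive point is surjectivity of $\cS \to \cO_G$ as a map of sheaves.

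The main obstacle is therefore the surjectivity, which must be interpreted at the level of stalks (equivalently, locally on $X$): given a germ $g \in \cO_G$, I must produce, on a possibly smaller neighborhood, a $(2,0)$-form $B$ with $(g,B) \in \cS$, i.e. satisfying \eqref{eq:dBCSexphol}. Writing $\beta := -\frac{1}{6}c(a^g,[a^g,a^g]) = g^*\sigma_3$, which is a holomorphic $(3,0)$-form, the requirement is to solve $\dbar B = 0$ and $\partial B = \beta$. By \eqref{eq:diff-of-CS} and the Maurer--Cartan identity one checks $\dbar \beta = 0$ and $\partial \beta = 0$, so $\beta$ is a $d$-closed holomorphic $(3,0)$-form; hence $\beta$ defines a class in the appropriate Dolbeault/de Rham sense that is locally exact. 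Concretely, since $\beta$ is holomorphic and $\partial$-closed, the holomorphic Poincar\'e lemma (equivalently, the $\partial$-Poincar\'e lemma on holomorphic forms) yields locally a holomorphic $(2,0)$-form $B$ with $\partial B = \beta$; holomorphicity gives $\dbar B = 0$ automatically, so \eqref{eq:dBCSexphol} holds. This establishes surjectivity of the sheaf map and completes the proof that \eqref{eq:sescE} is a short exact sequence of sheaves of groups. I would remark that the extension is non-central in general, since $\cO_G$ acts on $\Omega^{2,0}_{cl}$ trivially only when the relevant conjugation term in \eqref{eq:groupproduct} vanishes, but this does not affect exactness.
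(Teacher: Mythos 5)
Your proof is correct, and your surjectivity argument takes a genuinely different route from the paper's. The paper works with the raw constraint \eqref{eq:dBCShol}: it observes that the $(3,0)+(2,1)$-form $CS(g^{-1}\theta_0)-CS(\theta_0)-dc(g^{-1}\theta_0\wedge\theta_0)$ is closed, takes an arbitrary local smooth primitive $B=B^{2,0}+B^{1,1}+B^{0,2}$, and then corrects types: the $(0,2)$-component is killed, $\dbar B^{1,1}=0$, and the $\dbar$-Poincar\'e lemma produces $\tau$ with $B^{1,1}=\dbar\tau$, so that $\tilde B=B^{2,0}-\partial\tau$ is the desired $(2,0)$-primitive. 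You instead invoke the reformulation \eqref{eq:dBCSexphol} (which the paper establishes before the lemma, so you may legitimately use it), reducing surjectivity to solving $\partial B=g^*\sigma_3$ with $B$ holomorphic, and then apply the holomorphic Poincar\'e lemma to the $d$-closed holomorphic $(3,0)$-form $g^*\sigma_3$. This exploits holomorphicity from the outset and avoids the type-by-type correction; the trade-off is that it rests on local exactness of the holomorphic de Rham complex rather than only the $\dbar$-Poincar\'e lemma (both standard facts). Your remaining verifications (homomorphism properties, exactness at $\Omega^{2,0}_{cl}$ and at $\cS$) coincide with the paper's.

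One correction to your closing aside: the extension \eqref{eq:sescE} is in fact central, not merely abelian-by-$\cO_G$ with a possibly nontrivial action. By the product formula \eqref{eq:groupproduct}, for any $(g,B)\in\cS$ and any closed $(2,0)$-form $C$ one has $(g,B)(1,C)=(g,B+C)=(1,C)(g,B)$, since every correction term involves $a^1=0$; in particular the conjugation action of $\cS$ on the subgroup $\{(1,C)\}$ is trivial. This is consistent with the paper's description of $\cS$ as locally an abelian extension of $\cO_G$ by $\Omega^{2,0}_{cl}$. As you note, this slip has no bearing on the exactness of the sequence.
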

\begin{proof}
We work on a sufficiently small open set $U\subseteq X$. The exactness at $\Omega^{2,0}_{cl}$ follows from the injectivity of the map $B\mapsto (1,B)$. At $\cS$,  a section $(g,B) \in \cS$ is in the kernel of the projection to $\cO_G$ if and only if $g=1$, which, by \eqref{eq:dBCShol}, implies $dB=0$, i.e., $(1,B)$ lies in the image of $\Omega^{2,0}_{cl}$. Finally, at $\cO_{G}$, given $g\in \cO_{G}$, we have $dCS(g^{-1}\theta_0)=dCS(\theta_0)=0$, so the $(3,0)$+$(2,1)$-form
$$ CS(g^{-1}\theta_0)- CS(\theta_0) - dc(g^{-1}\theta_0\wedge \theta_0)$$
is closed. On an open subset it is exact, i.e., equals $dB$ for a $2$-form $B$. Consider the decomposition $B=B^{2,0}+B^{1,1}+B^{0,2}$ into $(p,q)$-forms. By the type of $dB$, the component $B^{0,2}$ must vanish and $\bar{\partial} B^{1,1}=0$. By the $\bar{\partial}$-Poincar\'e lemma, in some open subset $V\subset U$, we have $B^{1,1}=\bar{\partial} \tau$ for a $(1,0)$-form $\tau$. The form $$\tilde{B}=B^{2,0} - \partial \tau $$ is a $(2,0)$-form such that $d\tilde B=dB$ and then $(g,\tilde B)\in \cS(V)$ maps to $g\in \cO_G(V)$.

\end{proof}

The sequence \eqref{eq:sescE} induces a long exact sequence (of pointed sets) in cohomology
\begin{equation}\label{eq:lescEindcx}
  \xymatrix@R-2pc{
0 \ar[r] & H^0(\Omega^{2,0}_{cl}) \ar[r] & H^0(\cS) \ar[r] & H^0(\cO_{G}) \ar[r]^{\quad \delta_1} & \\
\ar[r] & H^1(\Omega^{2,0}_{cl}) \ar[r] & H^1(\cS) \ar[r] & H^1(\cO_{G}) \ar[r]^{\delta_2} & H^2(\Omega^{2,0}_{cl}),
}
\end{equation}
which we describe now explicitly.

We start by characterizing the cohomology of the sheaf $\Omega^{2,0}_{cl}$. Note that the abelian group $H^1(\Omega^{2,0}_{cl})$ parametrizes isomorphism classes of exact holomorphic Courant algebroids on the complex manifold $X$ \cite{G2}. Consider the differential graded sheaf $(\Omega^{\leqslant\bullet},d)$, defined by restriction of \eqref{eq:cxleqk}. By the existence of partitions of unity, this sheaf is moreover fine and hence soft. We use it to describe the cohomology.

\begin{lemma}\label{lemma:omega20k}
The sequence $0 \to \Omega^{2,0}_{cl} \xrightarrow{id} \Omega^{\leqslant \bullet}$ is a soft resolution of the sheaf  $\Omega^{2,0}_{cl}$ and hence
\begin{equation}\label{eq:cohomologyisom}
H^k(\Omega^{2,0}_{cl}) \cong H^k(\Omega^{\leqslant \bullet}).
\end{equation}
\end{lemma}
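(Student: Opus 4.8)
The plan is to reduce the statement to two classical local results—the $\dbar$-Poincar\'e lemma and the holomorphic Poincar\'e lemma—after observing that the only substantive point is the \emph{local exactness} of the augmented complex $0 \to \Omega^{2,0}_{cl} \to \Omega^{\leqslant \bullet}$. Indeed, each $\Omega^{\leqslant k}$ is a sheaf of modules over the soft sheaf of rings $\cC^\infty_X$, hence fine and thus soft on the paracompact manifold $X$; granting the resolution property, the isomorphism \eqref{eq:cohomologyisom} is then immediate from the abstract de Rham theorem, which computes the sheaf cohomology $H^k(\Omega^{2,0}_{cl})$ as the cohomology $H^k(\Omega^{\leqslant\bullet})$ of the complex of global sections. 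It therefore suffices to check exactness of the sheaf sequence on stalks, i.e.\ on arbitrarily small polydisks $U \subset X$.

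First I would dispose of the augmentation: since $\Omega^{\leqslant 0} = \Omega^{2,0}$ and $d = \partial + \dbar$, a $(2,0)$-form $\alpha$ lies in $\ker(d\colon \Omega^{\leqslant 0} \to \Omega^{\leqslant 1})$ exactly when $\partial \alpha = 0$ and $\dbar\alpha = 0$, which is the defining condition of $\Omega^{2,0}_{cl}$; this gives exactness at $\Omega^{\leqslant 0}$. For $k \geq 1$ I would run a staircase (equivalently, a spectral sequence) argument on the double complex $\{\Omega^{p,q}\}_{p \geq 2,\, q\geq 0}$ whose associated total complex recovers $(\Omega^{\leqslant\bullet},d)$ up to a shift of two in degree. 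Given $\omega \in \Omega^{\leqslant k}(U)$ with $d\omega = 0$, decompose $\omega = \sum_{p+q = k+2,\, p \geq 2} \omega^{p,q}$. Comparing types in $d\omega = 0$ yields $\dbar \omega^{2,k} = 0$ (there is no $(1,k+1)$ term, since $p \geq 2$), so by the $\dbar$-Poincar\'e lemma $\omega^{2,k} = \dbar\beta$ with $\beta \in \Omega^{2,k-1} \subset \Omega^{\leqslant k-1}$ on a smaller polydisk; replacing $\omega$ by $\omega - d\beta$ kills the $(2,k)$-component while staying $d$-closed and within $\Omega^{\leqslant k}$. Iterating from high to low antiholomorphic degree—each primitive having $p \geq 2$, hence remaining in the complex—reduces $\omega$, modulo $d\,\Omega^{\leqslant k-1}$, to its bottom-row component $\omega^{k+2,0} \in \Omega^{k+2,0}$, which $d$-closedness forces to be a $\partial$-closed holomorphic $(k+2)$-form.

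To finish I would invoke the holomorphic Poincar\'e lemma: on a polydisk the holomorphic de Rham complex resolves $\underline{\CC}$, so in degree $k+2 \geq 3$ the form $\omega^{k+2,0}$ is $\partial$-exact, $\omega^{k+2,0} = \partial\gamma = d\gamma$ with $\gamma$ a holomorphic $(k+1,0)$-form lying in $\Omega^{\leqslant k-1}$; hence $\omega \in d\,\Omega^{\leqslant k-1}$, proving exactness at $\Omega^{\leqslant k}$. The main obstacle—and the point where the specific shape of the truncated complex matters—is precisely this last descent: the $\dbar$-staircase alone terminates at the bottom row $q = 0$, where the $\dbar$-lemma is vacuous, so exactness genuinely requires passing to holomorphic forms and using exactness of the \emph{holomorphic} de Rham complex in the range $p \geq 2$. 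One must also check throughout that every primitive produced keeps holomorphic degree $p \geq 2$, so that it lies in $\Omega^{\leqslant\bullet}$; this is automatic here, and it is exactly what makes the truncated complex (rather than the full de Rham complex) resolve $\Omega^{2,0}_{cl}$, with cohomology shifted so as to begin at the closed holomorphic $2$-forms.
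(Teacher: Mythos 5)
Your proposal is correct, and it is worth noting that the paper itself offers no proof of this lemma: the text preceding it only justifies softness (each $\Omega^{\leqslant k}$ is a $\cC^\infty$-module, hence fine by partitions of unity, hence soft), and leaves the local exactness of the augmented complex entirely implicit as a standard fact. Your argument supplies exactly this missing content by the expected route. The two checks you isolate are indeed the only points of substance: first, that every $\dbar$-primitive $\beta \in \Omega^{p,q-1}$ produced by the staircase has holomorphic degree $p \geq 2$, so it stays inside $\Omega^{\leqslant k-1}$ and the correction $\omega - d\beta$ remains in the truncated complex; second, that the staircase bottoms out at a $d$-closed form $\omega^{k+2,0}$, which is holomorphic ($\dbar$-closed) and $\partial$-closed, so the holomorphic Poincar\'e lemma on a polydisk yields a holomorphic primitive $\gamma \in \Omega^{k+1,0}$ with $d\gamma = \partial\gamma = \omega^{k+2,0}$, and here $k+1 \geq 2$ guarantees $\gamma \in \Omega^{\leqslant k-1}$ (this is the degree bookkeeping that makes the complex resolve $\Omega^{2,0}_{cl}$ rather than $\underline{\CC}$ or the full closed two-forms). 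Your identification of $\ker(d\colon \Omega^{\leqslant 0} \to \Omega^{\leqslant 1})$ with $\Omega^{2,0}_{cl}$ is also the right augmentation check, since for a $(2,0)$-form both components $\partial\alpha$ and $\dbar\alpha$ of $d\alpha$ land in $\Omega^{\leqslant 1}$. A miniature version of the same staircase (one step of the $\dbar$-Poincar\'e lemma to adjust a primitive to type $(2,0)$) appears in the paper's proof of Lemma \ref{lemma:exact-sequence-holomorphic}, which confirms that your method is the one the authors have in mind.
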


The $k$-th cohomology group $H^k(\Omega^{\leqslant\bullet})$ is explicitly given by
\begin{equation}\label{eq:kcohomologyleqk}
H^k(\Omega^{\leqslant\bullet}) = \frac{\Ker \; d \colon \Omega^{\leqslant k} \to \Omega^{\leqslant k+1}}{ \Im \; d \colon \Omega^{\leqslant k-1} \to\Omega^{\leqslant k}}.
\end{equation}

Next, we define a subgroup $I$ of the additive group $H^1(\Omega^{2,0}_{cl})$, induced by $\delta_1$ in \eqref{eq:lescEindcx}. Note that $H^0(\cO_G)$, given by global holomorphic maps from $X$ to $G$, has a natural group structure induced by pointwise multiplication and can be identified with the gauge group of the trivial bundle $X \times G$. We use the homomorphism $\sigma_P \colon \cG_P \to H^1(\Omega^{2,0}_{cl})$ in Lemma \ref{lemma:sigmaP}, for the case $P = X \times G$.

\begin{lemma}\label{lemma:delta1}
Via the isomorphism $H^1(\Omega^{\leqslant\bullet}) \cong H^1(\Omega^{2,0}_{cl})$, the map $\delta_1$ in \eqref{eq:lescEindcx} is given by
$$
\delta_1(g) = \sigma_{X \times G}(g) = [g^*\sigma_3],
$$ 
for $g \in H^0(\cO_G)$ and $\sigma_3$ the holomorphic Cartan $(3,0)$-form on $G$ associated to $c$. Consequently, $\delta_1$ is a morphism of groups, and we define
$$
I := \operatorname{Im} \delta_1 \subset H^1(\Omega^{2,0}_{cl}).
$$
\end{lemma}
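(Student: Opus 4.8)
The plan is to compute the connecting map $\delta_1$ directly from the definition of the boundary operator in the long exact sequence \eqref{eq:lescEindcx}, and then to match the resulting \v Cech cocycle with the class $[g^*\sigma_3]$ under the de Rham-type isomorphism of Lemma \ref{lemma:omega20k}. First I would record that $\Omega^{2,0}_{cl}$ is central in $\cS$: from the product \eqref{eq:groupproduct} together with the cocycle identities \eqref{eq:afg-af-ag} one checks that $(1,B)$ commutes with every $(h,B')$ and that $(g,B)^{-1}=(g^{-1},-B)$, using that $c(a\wedge a)=0$ for a $(1,0)$-form $a$. Given a global $g\in H^0(\cO_G)$, I choose an open cover $\{U_i\}$ fine enough that, by the surjectivity of $\cS\to\cO_G$ established in Lemma \ref{lemma:exact-sequence-holomorphic}, $g|_{U_i}$ lifts to $(g|_{U_i},B_i)\in\cS(U_i)$; by \eqref{eq:dBCShol} the local potential $B_i\in\Omega^{2,0}(U_i)$ satisfies $dB_i=\sigma^{\theta_0}(g)$, the Chern-Simons three-form difference for the trivial connection. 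The boundary $\delta_1(g)$ is then represented by the \v Cech one-cocycle $(g|_{U_i},B_i)(g|_{U_j},B_j)^{-1}$, which by the computation above equals $(1,B_i-B_j)$; hence $\delta_1(g)=[\{B_i-B_j\}]\in H^1(\Omega^{2,0}_{cl})$.

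Next I would identify this class with $[g^*\sigma_3]$. By \eqref{eq:dBCSexphol} the lift condition \eqref{eq:dBCShol} reduces, for the trivial connection, to $\dbar B_i=0$ and $\partial B_i=-\tfrac{1}{6}c(a^g,[a^g,a^g])=g^*\sigma_3$, so that $dB_i=g^*\sigma_3$, a globally defined $d$-closed form in $\Omega^{\leqslant 1}$; thus each $B_i$ is a local $d$-primitive of $g^*\sigma_3$. I then run the standard zig-zag in the double complex $\check C^p(\{U_i\},\Omega^{\leqslant q})$ attached to the soft resolution $0\to\Omega^{2,0}_{cl}\to\Omega^{\leqslant\bullet}$ of Lemma \ref{lemma:omega20k}: the global $d$-closed section $g^*\sigma_3$ represents $[g^*\sigma_3]\in H^1(\Omega^{\leqslant\bullet})$, its local primitives $B_i$ form a $0$-cochain with $dB_i=g^*\sigma_3$, and applying the \v Cech differential produces the $\Omega^{2,0}_{cl}$-valued one-cocycle $\{B_i-B_j\}$. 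By the abstract de Rham lemma this cocycle represents the image of $[g^*\sigma_3]$ under the isomorphism $H^1(\Omega^{\leqslant\bullet})\cong H^1(\Omega^{2,0}_{cl})$ of \eqref{eq:cohomologyisom}. Comparing with the previous paragraph gives $\delta_1(g)=[g^*\sigma_3]$, the two descriptions resting on the very same local data $B_i$, up to the overall sign fixed by matching the \v Cech and boundary-map conventions.

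Finally, by construction $\sigma^{\theta_0}(g)$ is the representative of $\sigma_{X\times G}(g)$ from Lemma \ref{lemma:sigmaP} for $P=X\times G$ and $\theta=\theta_0$ (both being the Chern-Simons three-form difference \eqref{eq:dBCShol} for the trivial connection), whence $\delta_1(g)=\sigma_{X\times G}(g)=[g^*\sigma_3]$. The homomorphism property of $\delta_1$, and hence the fact that $I:=\Im\delta_1$ is a subgroup of $H^1(\Omega^{2,0}_{cl})$, then follows from the corresponding property of $\sigma_P$ proved in Lemma \ref{lemma:sigmaP}; alternatively it is automatic, since $\Omega^{2,0}_{cl}$ is central in $\cS$ and the boundary map of pointed sets associated with a central extension is a group morphism. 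I expect the main obstacle to be the bookkeeping of the de Rham-\v Cech comparison: one must verify that the local primitives appearing in the construction of the boundary operator coincide with those used in the zig-zag, and track the signs carefully so that the identification holds on the nose rather than merely up to sign.
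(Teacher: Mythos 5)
Your proposal is correct and is essentially the paper's own argument: the paper compresses exactly this diagram chase into one line, citing Lemma \ref{lemma:exact-sequence-holomorphic} for the local lifts, equation \eqref{eq:dBCSexphol} for $dB_i = g^*\sigma_3$, and Lemma \ref{lemma:sigmaP} for the identification with $\sigma_{X\times G}$, while your write-up supplies the centrality of $\Omega^{2,0}_{cl}$, the cocycle $\{B_i-B_j\}$, and the \v{C}ech--de Rham zig-zag explicitly. The $g$ versus $g^{-1}$ sign bookkeeping you flag at the end is a genuine subtlety, but it already lurks in the paper's own conventions (\eqref{eq:dBCShol} uses $g^{-1}\theta_0$ while Lemma \ref{lemma:sigmaP} uses $g\theta$), and since $\delta_1$ is a homomorphism it does not affect the subgroup $I=\operatorname{Im}\delta_1$.
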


\begin{proof}
The identification $\delta_1 = \sigma_{X \times G}$ as well as the explicit formula for $\delta_1$ follows from Lemma \ref{lemma:sigmaP} and equation \eqref{eq:dBCSexphol}, by a simple diagram chasing using Lemma \ref{lemma:exact-sequence-holomorphic}. 
\end{proof}

Recall that $H^1(\cO_{G})$ parameterizes isomorphism classes of holomorphic principal $G$-bundles on $X$. 
Given $[P] \in H^1(\cO_{G})$, it defines a complex first Pontryagin class
$$
p_1^c([P]) \in H^4(X,\CC),
$$
represented by $c(F_\theta \wedge F_\theta) \in \Omega^4$ for any choice of connection $\theta$ on the underlying smooth bundle $\underline{P}$. Denote by $\mathcal{A}_{P}$ the space of connections on $\underline{P}$ such that
\begin{equation}\label{eq:def-PA}
F_\theta^{0,2} = 0, \qquad P_\theta = P.
\end{equation}
As before, we denote by $P_\theta$ the holomorphic principal bundle induced by $\theta$. For $\theta$ in $\mathcal{A}_P$, we have
$$
c(F_\theta \wedge F_\theta) \in \Omega^{\leqslant 2}.
$$
Given another connection $\theta'$ with the same properties, setting  $a = \theta' - \theta \in \Omega^{1,0}(\ad \underline{P})$ there is an equality
$$
c(F_{\theta'} \wedge F_{\theta'}) - c(F_\theta \wedge F_\theta) = d \Big{(}2c(a \wedge F_\theta) + c(a \wedge d^\theta a) + \frac{1}{3}c(a,[a,a])\Big{)}
$$
and, by \eqref{eq:kcohomologyleqk},  we obtain a well-defined class
\begin{equation}\label{eq:p1c-H2leq}
p_1^c([P]) \in H^2(\Omega^{\leqslant \bullet}).
\end{equation}

\begin{proposition}\label{prop:lescEindcx}
There is an exact sequence of pointed sets
\begin{equation}\label{eq:lescEind2cx}
  \xymatrix{
0 \ar[r] & H^1(\Omega^{2,0}_{cl})/I \ar[r]^{\quad \iota} & H^1(\cS) \ar[r]^\jmath & H^1(\cO_{G}) \ar[r]^{^{p_1^c}\quad } & H^2(\Omega^{\leqslant \bullet}).
  }
\end{equation}
Furthermore, $\iota$ induces a transitive action of the additive group $H^1(\Omega^{2,0}_{cl})$ on the fibres of the map $H^1(\cS)\xrightarrow{\jmath} H^1(\cO_G)$.
\end{proposition}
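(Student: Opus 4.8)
The plan is to recognize the sequence \eqref{eq:lescEind2cx} as the tail of the long exact cohomology sequence \eqref{eq:lescEindcx} associated to \eqref{eq:sescE}, after identifying the two connecting maps. First I would note that \eqref{eq:sescE} is a \emph{central} extension of sheaves of groups: in the product \eqref{eq:groupproduct} one has $a^{1}=0$, so every element $(1,\beta)$ with $\beta\in\Omega^{2,0}_{cl}$ is central in $\cS$. Consequently $H^1(\Omega^{2,0}_{cl})$ is abelian and acts on the pointed set $H^1(\cS)$, the map $\psi\colon H^1(\Omega^{2,0}_{cl})\to H^1(\cS)$ appearing in \eqref{eq:lescEindcx} is the orbit map of the base point, and \eqref{eq:lescEindcx} has the exactness properties of the cohomology sequence of a central extension of sheaves of groups.

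For exactness of \eqref{eq:lescEind2cx} I would proceed spot by spot. Exactness of \eqref{eq:lescEindcx} at $H^1(\Omega^{2,0}_{cl})$ together with Lemma \ref{lemma:delta1} gives $\psi^{-1}(\ast)=\operatorname{Stab}(\ast)=\operatorname{Im}\delta_1=I$, so $\psi(x)=\psi(y)$ if and only if $x-y\in I$; hence $\psi$ descends to a well-defined injection $\iota\colon H^1(\Omega^{2,0}_{cl})/I\hookrightarrow H^1(\cS)$ with $\operatorname{Im}\iota=\operatorname{Im}\psi$, which is exactness at $H^1(\Omega^{2,0}_{cl})/I$. Combined with exactness of \eqref{eq:lescEindcx} at $H^1(\cS)$, namely $\operatorname{Im}\psi=\jmath^{-1}(\ast)$, this yields exactness at $H^1(\cS)$. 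Exactness at $H^1(\cO_{G})$, that is $\operatorname{Im}\jmath=(p_1^c)^{-1}(0)$, is exactness of \eqref{eq:lescEindcx} at $H^1(\cO_{G})$ together with the key identification $\delta_2=p_1^c$ under the isomorphism \eqref{eq:cohomologyisom}.

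The transitive action is the torsor statement for central extensions. Over a fixed good cover, a class in $\jmath^{-1}([P])$ is represented by an $\cS$-valued $1$-cocycle $\{\tilde g_{ij}\}$ lifting a cocycle $\{g_{ij}\}$ of $P$; for any $\Omega^{2,0}_{cl}$-valued cocycle $\{(1,\beta_{ij})\}$, centrality ensures $\{\tilde g_{ij}(1,\beta_{ij})\}$ is again a cocycle lifting $P$, and conversely two lifts of the same $\{g_{ij}\}$ differ by such a $\{(1,\beta_{ij})\}$. This gives the action of $H^1(\Omega^{2,0}_{cl})$; since $\jmath$ forgets the $\beta$'s it preserves the fibres of $\jmath$ and acts transitively on each nonempty fibre, the orbit of the base point being $\operatorname{Im}\iota\cong H^1(\Omega^{2,0}_{cl})/I$.

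The main obstacle is the identification $\delta_2=p_1^c$. I would compute $\delta_2([P])$ in \v Cech terms: lifting each transition function to $\tilde g_{ij}=(g_{ij},B_{ij})\in\cS(U_{ij})$, which exists by Lemma \ref{lemma:exact-sequence-holomorphic}, one gets $\tilde g_{ij}\tilde g_{jk}\tilde g_{ik}^{-1}=(1,c_{ijk})$ with, by \eqref{eq:groupproduct},
\[
c_{ijk}=B_{ij}+B_{jk}-B_{ik}+c\big(g_{jk}^{-1}a^{g_{ij}}\wedge a^{g_{jk}}\big)\in\Omega^{2,0}_{cl}(U_{ijk}),
\]
a \v Cech $2$-cocycle representing $\delta_2([P])$. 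To transport this class to $H^2(\Omega^{\leqslant\bullet})$ through the soft resolution of Lemma \ref{lemma:omega20k}, I would run the standard \v Cech--de Rham zigzag in the double complex $K^{p,q}=C^p(\{U_i\},\Omega^{\leqslant q})$, fixing a global connection $\theta\in\cA_P$ with local connection forms $A_i$. The bridge is the defining relation \eqref{eq:dBCShol}, which expresses $dB_{ij}$ as a Chern--Simons difference, together with the transgression identities \eqref{eq:CSfunctorial} and \eqref{eq:CSdiff}--\eqref{eq:CSdiffexp} and \eqref{eq:diff-of-CS}, which relate the local primitives $CS(A_i)$ of $c(F_\theta\wedge F_\theta)$ across overlaps. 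Carrying out the diagram chase identifies the total-complex class of $\{c_{ijk}\}\in K^{2,0}$ with that of the global $d$-closed form $c(F_\theta\wedge F_\theta)\in K^{0,2}$; under the two edge isomorphisms of the double complex these are $\delta_2([P])$ and, by \eqref{eq:p1c-H2leq}, $p_1^c([P])$, whence $\delta_2=p_1^c$. The technical heart, and the step most prone to sign and bookkeeping errors, is matching the correction term $c(g_{jk}^{-1}a^{g_{ij}}\wedge a^{g_{jk}})$ against the transgression terms in \eqref{eq:CSdiffexp} along the zigzag.
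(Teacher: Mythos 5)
Your proposal is correct and follows essentially the same route as the paper: exactness is extracted from the long exact sequence \eqref{eq:lescEindcx}, with $\iota$ well defined and injective because two classes in $H^1(\Omega^{2,0}_{cl})$ have the same image in $H^1(\cS)$ exactly when they differ by an element of $I=\operatorname{Im}\delta_1$ (your centrality observation makes explicit what the paper leaves implicit), the transitive action is given by translating the $B_{ij}$-components of cocycles, and the heart of the matter is the identification $\delta_2=p_1^c$ via the \v Cech--de Rham zigzag using \eqref{eq:dBCShol}, the Chern--Simons transgression identities \eqref{eq:CSdiff}--\eqref{eq:CSdiffexp}, and a chosen connection $\theta\in\cA_P$ with local forms. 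The paper carries out exactly the cocycle manipulation you defer (showing $\{d_{ijk}\}=\delta(\{B_{ij}-c(a_i\wedge a_{ji})\})$ and then $d(B_{ij}-c(a_i\wedge a_{ji}))=\delta\bigl(CS(\theta_i)-CS(\theta_0)-dc(\theta_i\wedge\theta_0)\bigr)$, whose differential is $c(F_{\theta_i}\wedge F_{\theta_i})$), packaged through the dimension-shift isomorphism $H^2(\Omega^{2,0}_{cl})\cong H^1(\Omega^{\leqslant 1}_{cl})$ rather than your double complex, which is the same computation in different clothing.
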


\begin{proof}
The exactness on $H^1(\cS)$ follows from the long exact sequence \eqref{eq:lescEindcx} and the definition of $I$ in Lemma \ref{lemma:delta1}.

To prove the exactness on $H^1(\cO_G)$ we just need to prove that $\delta_2$ in \eqref{eq:lescEindcx} coincides with $p_1^c$ as in \eqref{eq:p1c-H2leq}, when we use the isomorphism \eqref{eq:cohomologyisom}. In order to spell this out, one uses the \v Cech cohomology isomorphism $H^2(\Omega^{\leqslant 0}_{cl})\cong H^1(\Omega^{\leqslant 1}_{cl})$, coming from the exact sequence
$$ 0 \to \Omega^{\leqslant 0}_{cl} \to \Omega^{\leqslant 0} \xra{d} \Omega^{\leqslant 1}_{cl} \to 0,$$
and then take a representative in $H^1(\Omega^{\leqslant 1}_{cl})$ to define an element in $\Omega^{\leqslant 2}_{cl}$, whose class gives the element in $H^2(\Omega^{\leqslant \bullet})$.

 Consider a good cover $\{U_i\}$ and denote the multiple intersections by $U_{ij}$, $U_{ijk}$, etc.  Given a class $[\{g_{ij}\}]\in H^1(\cO_G)$, we consider a cochain $\{ (g_{ij},B_{ij}) \}$ in $\cS$, with $B_{ij} \in \Omega^{2,0}$. Its coboundary is given by $\{ (1,d_{ijk}) \}$ for
\begin{equation}\label{eq:dijk}
d_{ijk}=B_{ij}+B_{jk}+B_{ki}+c(g_{kj}a_{ij}\wedge a_{jk}),
\end{equation} 
and the connecting homomorphism is
$\delta_2( [\{g_{ij}\}] ) = [ \{d_{ijk}\} ] \in H^2(\Omega^{2,0}_{cl})$. Here, we define $a_{ij}:= g_{ji}\theta_0-\theta_0$.

We want to write $d_{ijk}$ as a coboundary. For this, we choose a connection $\theta\in \mathcal{A}_P$, given on $U_i$ by $\theta_i$, so we have $g_{ij}\theta_j=\theta_i$. Set $a_i=\theta_i-\theta_0$, where $\theta_0$ denotes the trivial connection on $U_i$. We then have
\begin{equation}\label{eq:aij}
a_{ij}=g_{ji}\theta_0-\theta_0=a_j - g_{ji}a_i
\end{equation}
and the identity $g_{ij}a_{ij}=-a_{ji}$, which, together with the invariance of $c$, gives
\begin{align*}
c(g_{kj}a_{ij}\wedge a_{jk}) & {} = c(g_{kj}a_j\wedge a_{jk}) -c(g_{ki}a_i \wedge a_{jk} ) \\
& {} = - c(a_j\wedge a_{kj}) - c(a_i \wedge g_{ik}a_{jk}) \\
& {} = - c(a_j\wedge a_{kj}) - c(a_i \wedge g_{ik}a_{k}) + c(a_i \wedge g_{ij}a_{j}) \\
& {} = - c(a_j\wedge a_{kj}) -c(g_{ki}a_i \wedge a_{k}) + c(a_i \wedge g_{ij}a_{j}) \\
& {} = - c(a_j\wedge a_{kj}) -c((a_k-a_{ik}) \wedge a_{k})) + c(a_i \wedge (a_i-a_{ji}))\\
& {} = - c(a_j\wedge a_{kj}) -c(a_k \wedge a_{ik}) - c(a_i \wedge a_{ji}).
\end{align*}

From this and \eqref{eq:dijk} we have
\begin{equation*}\label{eq:dijk-as-coboundary}
\{ d_{ijk} \} = \delta( \{B_{ij} - c(a_i\wedge a_{ji}) \})
\end{equation*}
and $[\{d_{ijk}\}]$ equals $ [ \{ d( B_{ij} - c(a_i\wedge a_{ji} ) ) \} ]$ in the \v Cech cohomology $H^1(\Omega^{\leqslant 1}_{cl}). $
Again, we want to express the representative as a coboundary. Using \eqref{eq:dBCShol},
\begin{equation*}\label{eq:d-as-CS-difference}
d( B_{ij} - c(a_i\wedge a_{ji} ) ) = CS(g_{ji}\theta_0) - CS(\theta_0) -dc(a_{ij}\wedge \theta_0) - dc(a_i\wedge a_{ji}).
\end{equation*}
We add and substract terms in this expression to get
\begin{align}\label{eq:diff-CS}
& (CS (\theta_j) - CS(\theta_0) - dc(\theta_j\wedge \theta_0) ) \nonumber \\
& - (CS(g_{ji}\theta_i) - CS(g_{ji}\theta_0) -dc(g_{ji}\theta_i\wedge g_{ji}\theta_0))   \\
& - dc(g_{ji} \theta_i \wedge g_{ji}\theta_0) + dc(\theta_j\wedge \theta_0) - dc(a_{ij}\wedge \theta_0) - dc(a_i\wedge a_{ji}). \nonumber
\end{align}
The third line is identically zero as the first two summands equal, by \eqref{eq:aij},
$$-dc(\theta_j\wedge g_{ji}\theta_0) + dc(\theta_j\wedge \theta_0) = -dc(\theta_j \wedge (g_{ji}\theta_0-\theta_0) ) = -dc(\theta_j \wedge a_{ij}),$$
whereas the last two summands equal, by \eqref{eq:aij} again,
$$ - dc(a_{ij}\wedge \theta_0) + dc(g_{ji}a_i\wedge a_{ij}) = dc((\theta_0+a_j-a_{ij})\wedge a_{ij})=dc(\theta_j \wedge a_{ij}).$$
The second line of \eqref{eq:diff-CS} is a basic form, so we can pull it back by $g_{ij}$ and obtain the coboundary
\begin{equation}\label{eq:dB-caa-as-coboundary}
d(B_{ij}-c(a_i\wedge a_{ji}))=\delta( CS (\theta_i) - CS(\theta_0) - dc(\theta_i\wedge \theta_0) ) .
\end{equation}
Consequently, $\delta_2( [\{g_{ij}\}] )$ is represented by
$$ d(CS (\theta_i) - CS(\theta_0) - dc(\theta_i\wedge \theta_0) ),$$
which gives a class in $H^2(\Omega^{\leqslant \bullet})$. By \eqref{eq:diff-of-CS}, this equals the representative of the Pontryagin class $ c(F_{\theta_i}\wedge F_{\theta_i}),$ that is, for $P$  the holomorphic bundle corresponding to $\{g_{ij}\}$, we have $$\delta_2( [\{g_{ij}\}] ) = p_1^c([P]).$$

Finally, as for the $H^1(\Omega^{2,0}_{cl})$-action, an element $[\{(0,C_{ij})\}]\in H^1(\Omega^{2,0}_{cl})$ acts  on $[\{(g_{ij},B_{ij})\}]\in H^1(\cS)$ by $[\{(g_{ij},B_{ij}+C_{ij})\}]$. The transitivity of the action is straightforward, as if $[\{(g_{ij},B_{ij})\}]$ and $[\{(g_{ij}',B_{ij}')\}]$ are in $j^{-1}([\{g_{ij}\}])$ then $[\{(g_{ij}',B_{ij}')\}] = [\{(g_{ij},B_{ij}'')\}]$ for suitable $B_{ij}'' \in \Omega^{2,0}(U_{ij})$, and therefore $B_{ij}'' - B_{ij}$ defines a $1$-cocycle for $\Omega^{2,0}_{cl}$.
\end{proof}

The isotropy of the $H^1(\Omega^{2,0}_{cl})$-action on $j^{-1}([P])$ can be described explicitly in terms of the gauge group of $P$. We shall postpone the proof of this fact until Proposition \ref{theo:deRhamC}.

In our next result, which follows as a straightforward consequence of Proposition \ref{prop:lescEindcx}, we show that for $\partial\dbar$-manifolds the existence of a Courant extension \eqref{eq:defstring} on a given holomorphic principal bundle reduces to the topological condition \eqref{eq:p1dR}.

\begin{corollary}\label{cor:ddbar}
Assume that $X$ is a $\partial\dbar$-manifold. Then, the natural map in cohomology
\begin{equation}\label{eq:cohomomap}
H^2(\Omega^{\leqslant \bullet}) \to H^4(X,\CC)
\end{equation}
is injective. Consequently, a holomorphic principal bundle $P$ admits a Courant extension \eqref{eq:defstring} if and only if
\begin{equation}\label{eq:p1dRbis}
p^c_1(P) = 0 \in H^4(X,\CC).
\end{equation}
\end{corollary}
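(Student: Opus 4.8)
The map in \eqref{eq:cohomomap} is the one induced on cohomology by the inclusion of $\Omega^{\leqslant\bullet}$, viewed via \eqref{eq:Omegaleq} as the subcomplex of global smooth forms of holomorphic degree at least $2$, into the full de Rham complex: by Lemma \ref{lemma:omega20k} and \eqref{eq:kcohomologyleqk} a class in $H^2(\Omega^{\leqslant\bullet})$ is represented by a $d$-closed form $\alpha\in\Omega^{\leqslant 2}=\Omega^{4,0}\oplus\Omega^{3,1}\oplus\Omega^{2,2}$, and its image in $H^4(X,\CC)$ is the de Rham class of $\alpha$. Thus injectivity of \eqref{eq:cohomomap} amounts to showing that if such an $\alpha$ is globally $d$-exact, say $\alpha=d\beta$ with $\beta\in\Omega^3$, then one can already find a primitive $\gamma\in\Omega^{\leqslant 1}=\Omega^{3,0}\oplus\Omega^{2,1}$ with $d\gamma=\alpha$. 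The plan is to construct $\gamma$ explicitly out of $\beta$, correcting the ``wrong type'' components by means of the $\partial\dbar$-lemma.

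Writing $\beta=\sum_{p+q=3}\beta^{p,q}$ and comparing types in $\alpha=d\beta$, I would start from the naive guess $\gamma^{3,0}=\beta^{3,0}$, which already gives the $(4,0)$-equation $\partial\beta^{3,0}=\alpha^{4,0}$, and then look for $\gamma^{2,1}$ of the form $\beta^{2,1}-\partial\lambda$. The point is that the $(2,2)$-component of $d\beta=\alpha$ reads $\dbar\beta^{2,1}+\partial\beta^{1,2}=\alpha^{2,2}$, so the obstruction to using $\beta^{2,1}$ itself is the term $\partial\beta^{1,2}\in\Omega^{2,2}$. This form is $\partial$-closed for trivial reasons and $\dbar$-closed because, by the $(1,3)$-component of $d\beta=\alpha$ (which vanishes since $\alpha\in\Omega^{\leqslant 2}$), one has $\dbar\beta^{1,2}=-\partial\beta^{0,3}$ and hence $\dbar\partial\beta^{1,2}=-\partial\dbar\beta^{1,2}=\partial\partial\beta^{0,3}=0$. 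Being moreover $\partial$-exact, the $\partial\dbar$-lemma provides $\lambda\in\Omega^{1,1}$ with $\partial\beta^{1,2}=\partial\dbar\lambda$. Setting $\gamma=\beta^{3,0}+\beta^{2,1}-\partial\lambda\in\Omega^{\leqslant 1}$, a direct check of the $(4,0)$, $(3,1)$ and $(2,2)$ components gives $d\gamma=\alpha$, using $\partial\gamma^{2,1}=\partial\beta^{2,1}$ and $\dbar\gamma^{2,1}=\dbar\beta^{2,1}+\partial\dbar\lambda=\alpha^{2,2}$.

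The main subtlety, and the only place where compactness and the $\partial\dbar$-hypothesis enter, is precisely this application of the $\partial\dbar$-lemma to $\partial\beta^{1,2}$: the Hodge components of the $d$-exact form $\alpha$ (or of $\beta$) are not individually $\dbar$-closed, so one cannot split the exactness type by type directly, and the correcting potential $\lambda$ must be produced from the off-diagonal term. With injectivity of \eqref{eq:cohomomap} in hand, the second assertion follows from Proposition \ref{prop:lescEindcx}: a holomorphic bundle $P$ admits a Courant extension \eqref{eq:defstring} precisely when $[P]$ lies in the image of $\jmath$ in \eqref{eq:lescEind2cx}, which by exactness at $H^1(\cO_G)$ is equivalent to $p_1^c([P])=0$ in $H^2(\Omega^{\leqslant\bullet})$. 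Since $p_1^c([P])$ is represented by $c(F_\theta\wedge F_\theta)$ and \eqref{eq:cohomomap} carries it to the de Rham Pontryagin class $p_1^c(P)\in H^4(X,\CC)$, one implication is immediate, while the converse is exactly the injectivity just established, yielding \eqref{eq:p1dRbis}.
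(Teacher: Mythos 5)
Your argument is correct and essentially the same as the paper's: injectivity is proved by the identical type decomposition of a primitive $\beta\in\Omega^3$ and the identical application of the $\partial\dbar$-Lemma to the $\partial$-exact, $\dbar$-closed term $\partial\beta^{1,2}$, producing the same corrected primitive $\beta^{3,0}+\beta^{2,1}-\partial\lambda\in\Omega^{\leqslant 1}$ (your explicit check $\dbar\partial\beta^{1,2}=\partial\partial\beta^{0,3}=0$ via the $(1,3)$-component is in fact more careful than the paper's shorthand claim $\dbar h^{1,2}=0$, which in general only holds in the form $\dbar h^{1,2}=-\partial h^{0,3}$). The only cosmetic difference is in deducing the second assertion, where you route through exactness of \eqref{eq:lescEind2cx} and the interpretation of $H^1(\cS)$ from Theorem \ref{th:classification} (stated later in the paper, but non-circularly), whereas the paper concludes directly from the injectivity and Proposition \ref{prop:strholCour}; both derivations are valid and amount to the same content.
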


\begin{proof}
Assume that $\tau \in \Omega^{\leqslant 2}$ satisfies $\tau = d h$ for some $h \in \Omega^3_\CC$. Then, by type decomposition it follows that
$$
\tau = d (h^{3,0} + h^{2,1}) + \partial h^{1,2}, \qquad \dbar h^{1,2} = 0.
$$
Applying the $\partial\dbar$-Lemma, there exists $b \in \Omega^{1,1}$ such that $\partial h^{1,2} = \partial \dbar b^{1,1}$ and therefore
$$
\tau = d (h^{3,0} + h^{2,1}) + \partial \dbar b^{1,1} = d(h^{3,0} + h^{2,1} - \partial b^{1,1}).
$$
We conclude that \eqref{eq:cohomomap} is injective. Finally, if \eqref{eq:p1dRbis} holds and $\theta$ is a connection on $\underline{P}$ such that $F_\theta^{0,2}= 0$ and $P_\theta = P$, by the injectivity of \eqref{eq:cohomomap} it follows that $dh + c(F_\theta \wedge F_\theta) = 0$ for some $h \in \Omega^{\leqslant 1}$. The `if part' in the second part of the statement follows now from Proposition \ref{prop:strholCour}. The `only if part' is trivial.
\end{proof}

To finish this section, we analyze a particular case of Proposition \ref{prop:lescEindcx} which is relevant for the applications to the theory of metrics on holomorphic Courant algebroids developed in \cite{grt2}.

\begin{corollary}\label{lemma:delta10hol}
Assume that $G$ is a reductive complex Lie group and that $X$ is compact. Then $H^0(\cO_{G}) = G$ and $\delta_1 = 0$. Consequently, there is an exact sequence of pointed sets
\begin{equation*}\label{eq:lescEind2cxdelta0}
  \xymatrix{
0 \ar[r] & H^1(\Omega^{\leqslant \bullet}) \ar[r]^{\quad \iota} & H^1(\cS) \ar[r]^\jmath & H^1(\cO_{G}) \ar[r]^{^{p_1^c}\quad } & H^2(\Omega^{\leqslant \bullet}).
  }
\end{equation*}
\end{corollary}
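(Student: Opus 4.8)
The plan is to establish the two numerical assertions---that $H^0(\cO_G) = G$ and that $\delta_1 = 0$---and then to read off the exact sequence directly from Proposition \ref{prop:lescEindcx} by observing that $I$ collapses to the trivial subgroup.

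First I would show that every global section of $\cO_G$, i.e. every holomorphic map $g \colon X \to G$, is constant. Since $G$ is reductive it is affine algebraic and hence admits a faithful finite-dimensional holomorphic representation, so we may fix a closed embedding $G \hookrightarrow \GL(N,\CC)$ for some $N$. Composing $g$ with this embedding yields a holomorphic map $X \to \GL(N,\CC) \subset \CC^{N^2}$ whose matrix entries are holomorphic functions on the compact complex manifold $X$. By the maximum principle these functions are constant on each connected component of $X$; assuming $X$ connected (as throughout), $g$ is constant with a single value in $G$, so $H^0(\cO_G) = G$.

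Next, to see that $\delta_1 = 0$, I would invoke Lemma \ref{lemma:delta1}, which identifies $\delta_1(g) = [g^*\sigma_3]$, where $\sigma_3$ is the holomorphic Cartan $(3,0)$-form on $G$. Since each $g \in H^0(\cO_G)$ is constant its differential vanishes, so the pullback $g^*\sigma_3$ is identically zero and hence $\delta_1(g) = 0$ for every $g$. Therefore $I = \operatorname{Im} \delta_1 = \{0\} \subset H^1(\Omega^{2,0}_{cl})$.

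Finally, with $I = 0$ the quotient $H^1(\Omega^{2,0}_{cl})/I$ occurring in the exact sequence of Proposition \ref{prop:lescEindcx} is simply $H^1(\Omega^{2,0}_{cl})$, which is identified with $H^1(\Omega^{\leqslant\bullet})$ via the isomorphism \eqref{eq:cohomologyisom} of Lemma \ref{lemma:omega20k}. Substituting this identification into Proposition \ref{prop:lescEindcx} yields the desired sequence
$$
0 \to H^1(\Omega^{\leqslant\bullet}) \xrightarrow{\iota} H^1(\cS) \xrightarrow{\jmath} H^1(\cO_G) \xrightarrow{p_1^c} H^2(\Omega^{\leqslant\bullet}).
$$
None of the steps is a genuine obstacle; the only point that truly requires the hypotheses is the constancy of global holomorphic maps, for which reductivity supplies exactly the faithful linear embedding needed to apply the maximum principle on compact $X$. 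For a non-reductive target (for instance a complex torus, which is not affine) both $H^0(\cO_G) = G$ and the vanishing of $\delta_1$ may fail, which is why this refinement is isolated as a separate corollary.
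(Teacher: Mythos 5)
Your proof is correct and follows essentially the same route as the paper's: constancy of global holomorphic maps $X \to G$ via a holomorphic embedding of the reductive group into affine space and the maximum principle, vanishing of $\delta_1(g) = [g^*\sigma_3]$ for constant $g$ by Lemma \ref{lemma:delta1}, and then specializing Proposition \ref{prop:lescEindcx} with $I = 0$ using the identification of Lemma \ref{lemma:omega20k}. Your version merely makes explicit the faithful representation $G \hookrightarrow \GL(N,\CC)$ and the connectedness of $X$, which the paper leaves implicit.
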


\begin{proof}
Elements of $H^0(\cO_{G})$ are global holomorphic maps $g \colon X \to G$. Since $G$ is reductive, it admits a holomorphic embedding into $\CC^k$ for sufficiently large $k$, and therefore $g$ must be constant by the maximum principle. Consequently, $g^*\sigma_3=0$, and we conclude $\delta_1 = 0$ by Lemma \ref{lemma:delta1}. The proof follows from Proposition \ref{prop:lescEindcx}.
\end{proof}



\subsection{Classification and holomorphic string classes}\label{subsec:holstringclass}

To start this section, we obtain our classification of string algebroids in terms of the first \v Cech cohomology of the sheaf $\cS$. We will use the description of the set of isomorphism classes in Proposition \ref{prop:classification}.

\begin{theorem}\label{th:classification}
There is a one to one correspondence between elements in $H^1(\cS)$ and isomorphism classes of string algebroids.
\end{theorem}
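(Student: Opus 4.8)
The plan is to construct mutually inverse maps between $H^1(\cS)$ and the set of isomorphism classes of string algebroids, using the local models $(Q_0, P_0, \rho_0)$ from Proposition \ref{prop:strholCour} and the identification $\cS(U) = \Aut(Q_0, P_0, \rho_0)$ from Lemma \ref{lem:AutQ0hol} as the bridge. First I would fix a good cover $\{U_i\}$ of $X$ on which any string algebroid trivializes. Given a \v Cech one-cocycle $\{(g_{ij}, B_{ij})\}$ with values in $\cS$, the key observation is that each $(g_{ij}, B_{ij}) \in \cS(U_{ij})$ is precisely an isomorphism of the trivial local string algebroids $(Q_0, P_0, \rho_0)$ over $U_{ij}$ (via Lemma \ref{lem:AutQ0hol}), so I can use these as transition data to glue copies of $(Q_0, P_0, \rho_0)_{|U_i}$ into a global string algebroid $Q$ over $X$. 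The cocycle condition for $\{(g_{ij}, B_{ij})\}$ in the non-abelian \v Cech complex of $\cS$ is exactly what guarantees the gluing is consistent on triple overlaps, so that the resulting $Q$ is a well-defined transitive holomorphic Courant algebroid equipped with $\rho$ and the underlying bundle $P$ glued from $P_0$ via $\{g_{ij}\}$.

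Next I would check the two directions are well-defined on equivalence classes. For the forward map, if $\{(g_{ij}, B_{ij})\}$ and $\{(g'_{ij}, B'_{ij})\}$ are cohomologous, i.e. related by a zero-cochain $\{(h_i, C_i)\}$ via the coboundary relation in $\cS$, then the zero-cochain provides precisely the local isomorphisms (again interpreted through Lemma \ref{lem:AutQ0hol} as string algebroid isomorphisms over each $U_i$) that assemble into a global isomorphism of the two glued string algebroids in the sense of Definition \ref{def:stringholCourmor}. Conversely, given any string algebroid $(Q, P, \rho)$, Proposition \ref{prop:strholCour} lets me present it locally as $(Q_0, P_0, \rho_0)$ after choosing isotropic splittings $\lambda_i$ (equivalently connections/data) over each $U_i$; the comparison isomorphisms on overlaps $U_{ij}$ are automorphism-type maps that, by Lemma \ref{lemma:strisomorphism} and Lemma \ref{lem:AutQ0hol}, are exactly elements $(g_{ij}, B_{ij}) \in \cS(U_{ij})$, producing a one-cocycle whose class in $H^1(\cS)$ is independent of all choices.

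The main obstacle I anticipate is the bookkeeping at two points. First, to even start the inverse construction I must ensure every string algebroid is locally trivial in the strong sense needed: over a small enough $U_i$ the principal bundle $P$ trivializes holomorphically, and I must produce a splitting $\lambda_i$ realizing $Q_{|U_i} \cong (Q_0, P_0, \rho_0)$ as in Proposition \ref{prop:strholCour}, which requires that the local integrability data $(H_i, \theta_i)$ can be normalized to the trivial model — this uses the $\dbar$-Poincar\'e lemma and the freedom in choosing $B$-fields, paralleling the argument already appearing in Lemma \ref{lemma:exact-sequence-holomorphic}. Second, I must verify that the transition elements genuinely satisfy the $\cS$-cocycle condition with the twisted product \eqref{eq:groupproduct}, rather than a naive additive one; the term $c(g_2^{-1} a^{g_1} \wedge a^{g_2})$ in the group law is exactly the correction forced by composing the Courant algebroid automorphisms $f_g(B, a^g)$, and confirming it matches the gluing on triple overlaps is the delicate computation. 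Once local triviality and the precise form of the cocycle condition are in hand, the two maps are manifestly inverse to each other, since both the forward gluing and the inverse extraction are built from the single identification $\cS(U) = \Aut(Q_0, P_0, \rho_0)$.
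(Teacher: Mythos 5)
Your proposal is correct, and it proves the theorem by a genuinely different route from the paper's. You run the classical twisted-forms argument: every string algebroid is locally isomorphic to the trivial model $(Q_0,P_0,\rho_0)$ of Section \ref{sec:Chern-Simons-sheaf}, transition data on overlaps are sections of $\cS$ by Lemma \ref{lem:AutQ0hol}, and the first \v Cech cohomology of the automorphism sheaf of the local model classifies the objects locally modelled on it. The two points you flag are the right ones, and both are already available in the paper: for local triviality, after trivializing $P$ holomorphically you may replace any local presentation $(H,\theta)$ by $(H',\theta_0)$ with $\theta_0$ the trivial connection (Lemma \ref{lemma:strisomorphism} with $g=\mathrm{id}$), so that $dH'=0$ in $\Omega^{\leqslant 1}$, and then $H'=dB$ with $B\in\Omega^{2,0}$ on a smaller polydisc by the local exactness underlying Lemma \ref{lemma:omega20k} (the same $\dbar$-Poincar\'e argument as in the proof of Lemma \ref{lemma:exact-sequence-holomorphic}); as for the twisted cocycle condition, it is automatic rather than delicate, because the identification $\cS(U)=\Aut(Q_0,P_0,\rho_0)$ of Lemma \ref{lem:AutQ0hol} is an isomorphism of groups --- the correction term $c(g_2^{-1}a^{g_1}\wedge a^{g_2})$ in \eqref{eq:groupproduct} is exactly what composition of the maps $f_g(B,a^g)$ produces, so consistency on triple overlaps is just associativity of composition. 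The paper, by contrast, never glues: it first invokes Proposition \ref{prop:classification} to present isomorphism classes as triples $(P,H,\theta)$ modulo the relation \eqref{eq:anomalyiso}, and then matches these with $H^1(\cS)$ by explicit \v Cech manipulations --- splitting $B_{ij}-c(a_i\wedge a_{ji})=C_j-C_i$ via acyclicity of $\Omega^{2,0}$, defining $H$ by \eqref{eq:def-Hi}, and controlling all changes of connection and of representative cocycle through Chern--Simons identities. Your route buys brevity and conceptual transparency; the paper's route buys the explicit cocycle-to-$(P,H,\theta)$ dictionary, which is what is actually used downstream (the fibre description and isotropy group $I_{[P]}$ in Proposition \ref{theo:deRhamC}, and the gauge-theoretic parametrization in Section \ref{sec:deformation-theory}), and which your argument would have to reconstruct by essentially the same computations if those consequences were needed.
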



\begin{proof}
First, we give a description of $H^1(\cS)$ using the $1$-cocycles $Z^1(\cO_G)$ on a good cover, whose elements $\{g_{ij}\}$ are identified with the holomorphic principal bundle $P$ which they glue to. By Proposition \ref{prop:classification}, the statement reduces to prove that $H^1(\cS)$ is bijective to 
$$\{ (P,H,\theta) \st P\in Z^1(\cO_{G}), H\in \Omega^{\leqslant 1}, \theta\in \cA_{P}, dH= - c(F_\theta\wedge F_\theta) \} / \sim, $$
where $\sim$ is the equivalence relation generated by $(P,H,\theta)\sim (P,H',\theta')$ when 
\begin{equation}\label{eq:equiv-rel-1}
H' = H + CS(\theta) - CS(\theta') - dc(\theta \wedge \theta') + dB
\end{equation} for some $B\in \Omega^{\leqslant 0}$, and, for any $0$-cochain $\{h_i\}$, which we write $\{h_i\}\in C^0(\cO_{G})$,
\begin{equation}\label{eq:equiv-rel-2}
(\{g_{ij}\}, H, \{\theta_j\})\sim (\{h_i g_{ij} h_j^{-1} \}, H, \{h_i\theta_i \}),
\end{equation}
where the principal bundle can change. 
Take a representative $(g_{ij},B_{ij})$ of a class in $H^1(\cS)$. To define a triple $(P,H,\theta)$, we set $P=\{g_{ij}\}$, choose any connection $\theta\in \cA_{P}$ on $P$, locally given by $\theta_j$, and use the proof of Proposition \ref{prop:lescEindcx} to define $H$. By the cocycle condition,
$$(1,0)=\delta (g_{ij},B_{ij})  = (\delta g_{ij}, \delta(B_{ij}-c(a_i\wedge a_{ji}))),$$
As $\Omega^{2,0}$ is an acyclic sheaf, i.e., $H^1(\Omega^{2,0})=0$, there exist $\{C_i\}$, not necessarily closed $(2,0)$-forms defined up to addition of a global $(2,0)$-form $B$, such that
\begin{equation}\label{eq:Bij-as-Cj-Ci}
B_{ij}-c(a_i\wedge a_{ji})=C_j-C_i.
\end{equation}
By differentiating and using \eqref{eq:dB-caa-as-coboundary}, we have that the expression
\begin{equation}\label{eq:def-Hi}
H_i:=dC_i - CS(\theta_i) + CS(\theta_0) + dc(\theta_i\wedge\theta_0)
\end{equation}
defines a global form $H\in \Omega^{\leqslant 1}$, up to addition of an exact form $dB$. Just as in the proof of Proposition \ref{prop:lescEindcx} we get
$$ dH+c(F_\theta\wedge F_\theta)=0. $$

If we choose a different connection $\theta'\in \cA_{P}$ locally given by $\theta'_i$, we define $a_i'=\theta_i'-\theta_0$. When repeating the argument above we obtain, by \eqref{eq:Bij-as-Cj-Ci},
$$ C_j' - C_i'= B_{ij}-c(a_i'\wedge a_{ji})=B_{ij}-c(a_i\wedge a_{ji}) + c( (a_i-a_i')\wedge a_{ji}).$$
By using \eqref{eq:aij} and $g_{ji}(a_i-a_i')=g_{ji}(\theta_i-\theta_i') = \theta_j-\theta_j'=a_j-a_j'$, we get
$$c( (a_i-a_i')\wedge a_{ji})= c( (a_i-a_i')\wedge (a_i - g_{ij} a_j)) = -c(a_i' \wedge a_i) + c(a_j' \wedge a_j),$$
that is, $C_i'=C_i + c(a_i'\wedge a_i)$, again, up to addition of a global $(2,0)$-form $B$. When using \eqref{eq:def-Hi} to compute $H_j'$ and compare it to $H_j$, we obtain
\begin{align*}
H_i' & {} =dC_i'-CS(\theta'_i)+CS(\theta_0)+dc(\theta_i'\wedge \theta_0) \nonumber\\
& {}= dC_i - CS(\theta_i) + CS(\theta_0)+dc(\theta_i\wedge \theta_0) \\
& \phantom{ {} = } + CS(\theta_i) - CS(\theta_i')  + dc( a_i'\wedge a_i ) + dc((\theta'_i-\theta_i)\wedge \theta_0 )\nonumber\\
& {}= H_i + CS(\theta_i) - CS(\theta_i') - dc(\theta_i \wedge \theta_i' ), \nonumber
\end{align*}
up to addition of $dB$, where the last equality follows from
$$ dc( a'_i \wedge a_i ) = dc( (\theta_i'-\theta_0) \wedge (\theta_i-\theta_0) ) = dc(\theta_i'\wedge \theta_i') - dc((\theta'_i-\theta_i)\wedge \theta_0 ). $$
Thus, $(P,H,\theta)\sim (P,H',\theta')$ for the relation in the statement of the theorem.



We check now that the definition above does not depend on the choice of representative $(g_{ij},B_{ij})$. Consider a different representative given, for some $1$-cochain $\{(h_i,B_i)\}$, by
\begin{align*}
(\tilde{g}_{ij},\tilde{B}_{ij}) & {} =(h_i,B_i)(g_{ij},B_{ij})(h_j,B_j)^{-1}\\
& {} = (h_i g_{ij} h_j^{-1}, B_{ij}+B_i-B_j + c(g_{ji}a^{h_i}\wedge a_{{ij}}) + c(h_j a^{h_i g_{ij}}\wedge a^{h_j^{-1}})). \nonumber
\end{align*}
By using the invariance of $c$ and \eqref{eq:afg-af-ag}, we have
\begin{align}
\tilde{B}_{ij} & {} = B_{ij}+B_i-B_j - c(a^{h_i}\wedge a_{{ji}}) - c( (g_{ji}a^{h_i} + a_{ij})\wedge a^{h_j}). \nonumber
\end{align}
In order to  get the corresponding $[(P,\tilde{H},\tilde{\theta})]$, we choose, for convenience in the calculations below, the connection given by $\tilde{\theta}_j=h_j\theta_j$,  which yields the identity
\begin{align}\label{eq:tilde-a}
\tilde{a}_i & {} = h_i\theta_i- \theta_0 = h_i a_i + a^{h_i^{-1}}= h_i(a_i - a^{h_i}),
\end{align}
and consider the expression
\begin{equation}\label{eq:tildeC}
\tilde{B}_{ij}-c(\tilde{a}_i\wedge \tilde{a}_{ji}) = \tilde{C}_j-\tilde{C}_i.
\end{equation}
We relate it to $H_j$ by using \eqref{eq:Bij-as-Cj-Ci}. The left-hand side of \eqref{eq:tildeC} equals
$$ C_j  - C_i + B_i - B_j + c(a_i\wedge a_{ji}) - c(a^{h_i}\wedge a_{{ji}}) - c( (g_{ji}a^{h_i} + a_{ij})\wedge a^{h_j}) - c(\tilde{a}_i\wedge \tilde{a}_{ji}).$$
We deal now with the terms with $c$. The last one, by \eqref{eq:afg-af-ag} and \eqref{eq:tilde-a}, is
$$  -c(\tilde{a}_i\wedge a^{h_j g_{ji} h_i^{-1}}) =  -c(\tilde{a}_i\wedge (h_i g_{ij}a^{h_j} + h_i a_{ji} + a^{h_i^{-1}})),$$
while the first two,  by \eqref{eq:tilde-a}, are
$$ c((a_i - a^{h_i})\wedge a_{{ji}}) = c( h_i^{-1}\tilde{a}_i \wedge a_{ji} ) = c(\tilde{a}_i \wedge h_i a_{ji}).$$
There is a cancellation and we have that all the terms with $c$ are
\begin{equation*}\label{eq:intermediate-step}
-c(\tilde{a}_i\wedge (h_i g_{ij}a^{h_j} + a^{h_i^{-1}})) - c( (g_{ji}a^{h_i} + a_{ij})\wedge a^{h_j}).
\end{equation*}
On the one hand,
$$ -c(\tilde{a}_i\wedge a^{h_i^{-1}})= c(h_i^{-1}\tilde{a}_i \wedge a^{h_i}) = -c(a_i\wedge a^{h_i}),$$
whereas the rest of the terms are
$$ -c((g_{ji}h_i^{-1} \tilde{a}_i - g_{ji}a^{h_i} - a_{ij})\wedge a^{h_j})=c((g_{ji}a_i - a_{ij}+g)\wedge a^{h_j})=c(a_j\wedge a^{h_j}).$$
So \eqref{eq:tildeC} becomes, up to addition of a global two-form,
$$\tilde{C}_i=C_i - B_i + c(a_i\wedge a^{h_i}).$$
The corresponding $\tilde{H}$ is, by \eqref{eq:dBCShol} and using that the pullback by $h_i$ preserves basic forms,
\begin{align*}
\tilde{H}_i & {} =d\tilde{C}_i - CS(\tilde{\theta}_i)+ CS(\theta_0) + dc(\tilde{\theta}_i\wedge \theta_0) \\
& {} = dC_i - (CS(h_i^{-1}\theta_0) - CS(\theta_0) -dc(h_i^{-1}\theta_0\wedge \theta_0)) -dc(a_i\wedge a^{h_i})\\
&\phantom{ {} = } -h_i^{-1}( CS(\tilde{\theta}_i) - CS(\theta_0) -dc(\tilde\theta_i\wedge \theta_0) )\\
& {} = dC_i - CS(\theta_i)  +CS(\theta_0) + dc((\theta_i-\theta_0)\wedge h_i^{-1}\theta_0) + dc(a_i\wedge a^{h_i})\\
& {} =  dC_i - CS(\theta_i)  +CS(\theta_0) + dc(\theta_i\wedge \theta_0) + 2dc(a_i\wedge a^{h_i}),
\end{align*}
that is, $H_i$ up to addition of an exact form, as $c(a_i\wedge a^{h_i})\in \Omega^{\leqslant  0}$. Hence, by \eqref{eq:equiv-rel-2}, the image does not depend on the representative.

The calculations above also show that the map defined is injective. To show that it is surjective, consider a class $[(P,H,\theta)]$. We need to find an element $(g_{ij},B_{ij})$ mapping to it. The representative $P=\{g_{ij}\}$ determines an element of $Z^1(\cO_G)$. By $dH = - c(F_\theta\wedge F_\theta)$, we can find $\{C_i\}$ such that \eqref{eq:def-Hi} is satisfied. Define then $B_{ij}$ by \eqref{eq:Bij-as-Cj-Ci}, where $a_j$ is determined by $\theta$ and $a_{ij}$ by $P$.
\end{proof}

\begin{remark}
	A much simpler but inspirational setting for part of the technicalities in the proofs of Proposition \ref{prop:lescEindcx} and Theorem \ref{th:classification} is \cite[Sec. 2.3.1]{Rubioth} and \cite{Rubio}.
\end{remark}

Let $P$ be a holomorphic principal $G$-bundle over $X$. Building on Proposition \ref{prop:classification}, Proposition \ref{lemma:delta10hol}, and Theorem \ref{th:classification}, we provide next a characterization of $\jmath^{-1}([P])$ \emph{\`a la} de Rham and describe the isotropy of the corresponding $H^1(\Omega^{2,0}_{cl})$-action (see Proposition \ref{prop:lescEindcx}). As mentioned in Section \ref{sec:intro} (see also Appendix \ref{sec:smooth}), up to isotropy the elements in $\jmath^{-1}([P])$ provide analogues in the holomorphic category of Redden's real string classes \cite{Redden}, and hence we will refer to them as \emph{holomorphic string classes}.

As before, we will denote by $\cG_P$ the gauge group of $P$. 
Relying on Lemma \ref{lemma:sigmaP} and Lemma \ref{lemma:omega20k} we can give the following definition.

\begin{definition}
We define the additive subgroup
$$
I_{[P]} :=  \Im \; \sigma_P = \langle \sigma_P(g) \; | \; g \in \cG_P \rangle \subset H^1(\Omega^{2,0}_{cl}).
$$
\end{definition}

It is easy to see that $I_{[P]}$ is an invariant of the isomorphism class $[P]$, which justifies the notation. We can now state our characterizing \emph{holomorphic string classes} for the principal bundle $P$.

\begin{proposition}\label{theo:deRhamC}
There is a natural bijection
\begin{equation*}\label{eq:lescEind3}
\jmath^{-1}([P]) \cong \{(H,\theta) \in \Omega^{\leqslant 1} \times \cA_{P} \; | \; dH + c(F_\theta \wedge F_\theta) = 0\}/ \sim,
\end{equation*}
where $(H,\theta)\sim (H',\theta')$ if, for some $B\in \Omega^{\leqslant 0}$ and $g\in \cG_P$ 
\begin{equation}\label{eq:anomaly}
H'  = H + CS(g\theta) - CS(\theta') - dc(g\theta \wedge \theta') + dB.
\end{equation}
Furthermore, the isotropy of $\jmath^{-1}([P])$ for the transitive $H^1(\Omega^{2,0}_{cl})$-action is $I_{[P]}$.
\end{proposition}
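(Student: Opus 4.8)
The plan is to deduce the bijection from the classification of Theorem \ref{th:classification} (equivalently Proposition \ref{prop:classification}) by pinning down a representative with connection in $\cA_P$, and then to compute the stabiliser of the transitive $H^1(\Omega^{2,0}_{cl})$-action directly against the homomorphism $\sigma_P$ of Lemma \ref{lemma:sigmaP}.

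First I would identify $\jmath^{-1}([P])$. Under the correspondence of Theorem \ref{th:classification} the map $\jmath$ sends the class of a triple $(P',H,\theta)$ to $[P']\in H^1(\cO_G)$, so $\jmath^{-1}([P])$ consists of the classes of triples whose bundle is isomorphic to $P$. Given such a triple, I would apply the bundle-change move \eqref{eq:equiv-rel-2} with a $0$-cochain $\{h_i\}$ trivialising $[P']=[P]$ to transport it to a triple $(P,H,\theta)$ with $\theta\in\cA_P$ and the same $H$; this exhibits every fibre class as represented by a pair $(H,\theta)$ with $\theta\in\cA_P$ and $dH+c(F_\theta\wedge F_\theta)=0$, which gives surjectivity of the proposed map $[(P,H,\theta)]\mapsto[(H,\theta)]$. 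For well-definedness and injectivity I must show that the restriction of the equivalence of Theorem \ref{th:classification} to fixed-$P$ representatives is exactly \eqref{eq:anomaly}. One inclusion is immediate: \eqref{eq:anomaly} is the composition of a gauge move \eqref{eq:equiv-rel-2} with $\{h_i\}=g\in\cG_P$ (which replaces $\theta$ by $g\theta$ and leaves $H$ fixed) followed by a connection-change move \eqref{eq:equiv-rel-1} from $g\theta$ to $\theta'$; conversely, each generator \eqref{eq:equiv-rel-1}, \eqref{eq:equiv-rel-2} preserving the cocycle is the special case $g=\mathrm{id}$, resp.\ $\theta'=g\theta$, of \eqref{eq:anomaly}. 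The point needing care is that a chain of moves connecting two fixed-$P$ representatives may pass through bundles with a different cocycle; collapsing such a chain by pushing every intermediate term forward along the accumulated transport isomorphism turns each cocycle-changing step into a fixed-$P$ gauge step, so no equivalences beyond \eqref{eq:anomaly} are produced. That \eqref{eq:anomalyiso} agrees with \eqref{eq:anomaly} is a direct rewriting: \eqref{eq:CSdiffexp} converts the transgression terms of \eqref{eq:anomalyiso} into $CS(g^{-1}\theta')-CS(\theta)-dc(g^{-1}\theta'\wedge\theta)$, and functoriality \eqref{eq:CSfunctorial} together with the symmetry of the relation yields the form \eqref{eq:anomaly}.

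For the isotropy statement I would first record the de Rham form of the transitive action from Proposition \ref{prop:lescEindcx}: under the identification $H^1(\Omega^{2,0}_{cl})\cong H^1(\Omega^{\leqslant\bullet})$ of Lemma \ref{lemma:omega20k}, a class represented by a $d$-closed $h\in\Omega^{\leqslant 1}$ acts by $[(H,\theta)]\mapsto[(H+h,\theta)]$. This matches the cochain-level action $B_{ij}\mapsto B_{ij}+C_{ij}$ used in the proof of Proposition \ref{prop:lescEindcx}, since adding the cocycle $\{C_{ij}\}$ shifts the local potentials $C_i$ by $\tilde C_i$ and hence $H_i$ by the globally defined closed form $d\tilde C_i$ representing $[\{C_{ij}\}]$; it is well defined on classes because changing $h$ by $d\Omega^{\leqslant 0}$ is \eqref{eq:anomaly} with $g=\mathrm{id}$. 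Then the stabiliser of $[(H,\theta)]$ consists of those $[h]$ with $(H+h,\theta)\sim(H,\theta)$, that is, for which there exist $g\in\cG_P$ and $B\in\Omega^{\leqslant0}$ with $h=-(CS(g\theta)-CS(\theta)-dc(g\theta\wedge\theta))-dB$. Passing to $H^1(\Omega^{\leqslant\bullet})$ this reads $[h]=-\sigma_P(g)$, and since $\sigma_P$ is a group homomorphism into an abelian group (Lemma \ref{lemma:sigmaP}) we have $-\sigma_P(g)=\sigma_P(g^{-1})$; hence the stabiliser equals $\Im\sigma_P=I_{[P]}$, independent of the base point as it must be for an abelian group acting transitively.

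I expect the main obstacle to be the bookkeeping in the first part---matching the transgression and functoriality identities so that the restricted equivalence is \emph{exactly} \eqref{eq:anomaly}, and in particular confirming that chains through non-isomorphic cocycle representatives create no extra identifications. The isotropy computation itself is short once the de Rham description of the action is in hand; its only subtle input is the transitivity of $\sim$ (so that equality of classes is detected by a single application of \eqref{eq:anomaly} with $\theta'=\theta$), which follows from the Chern-Simons cocycle identities already exploited in Lemma \ref{lemma:sigmaP} and Lemma \ref{lem:AutQ0hol}.
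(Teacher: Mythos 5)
Your proposal is correct and follows essentially the same route as the paper: restrict the equivalence relation of Theorem \ref{th:classification} to representatives with a fixed cocycle, observe that the cochains $\{h_i\}$ preserving the cocycle are exactly the gauge transformations $g\in\cG_P$ so that the two moves \eqref{eq:equiv-rel-1} and \eqref{eq:equiv-rel-2} combine into \eqref{eq:anomaly}, and then read off the isotropy by applying \eqref{eq:anomaly} with $\theta'=\theta$ to get $[C]=\sigma_P(g)\in I_{[P]}$. Your extra care about collapsing chains that pass through other cocycles (via functoriality of $CS$ and the basicness of \eqref{eq:CSdiff}) is a correct elaboration of a step the paper's proof states only tersely, not a different method.
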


\begin{proof}
To describe the fibre over $[P]$, 
we have to consider the equivalence relation \eqref{eq:equiv-rel-1} combined with the equivalence relation \eqref{eq:equiv-rel-2} for those $\{h_i\}\in C^0(\cO_{G})$ such that $g_{ij}=h_ig_{ij}h_j^{-1}$. These  $\{h_i\}\in C^0(\cO_{G})$ define precisely the automorphisms of $P$ and the isomorphism of the statement follows.


Finally, to describe the isotropy of the $H^1(\Omega^{2,0}_{cl})$-action, we simply notice that if $[(H,\theta)] = [(H + C,\theta)]$ for $[C] \in H^1(\Omega^{2,0}_{cl})$, then \eqref{eq:anomaly} implies that $[C] = [\sigma(g)]$ for some $g \in \cG$, as claimed.

\end{proof}

\section{Deformation theory}
\label{sec:deformation-theory}

In this section we study the deformation theory for string algebroids over a fixed compact complex manifold $X$.

\subsection{Deformations of string algebroids}

Let $(Q,P,\rho)$ be a string algebroid over $X$, with structure group $G$. By a result of Ehresmann \cite{Ehresmann}, a complex deformation $P_t$ of the holomorphic bundle $P$ can be regarded as a smooth family of integrable $(0,1)$-connections $\theta_t^{0,1}$ on the smooth $G$-bundle $\underline{P}$. Relying on Proposition \ref{prop:classification}, it is natural to give the following definition.

\begin{definition}
\label{def:deformation Q}
 A smooth (respectively analytic) string algebroid deformation of $(Q,P,\rho)$ is a family of string algebroids $(Q_t,P_t,\rho_t)_{t\in \Lambda}$ over the complex manifold $X$, with structure group $G$, parametrized by $\Lambda$, satisfying:
 \begin{itemize}
  \item[i)] $(Q_0,P_0,\rho_0)=(Q,P,\rho)$,
  \item[ii)] the connexion $\theta_t^{0,1}$ is smooth (respectively analytic) with respect to $t$,
  \item[iii)] for each $t$ there exists a representative $(H_t, \theta_t) \in \Omega^{\leqslant 1} \times \cA_{P_t}$ of the isomorphism class of $(Q_t,P_t,\rho_t)$, as in Proposition \ref{theo:deRhamC}, such that $(H_t, \theta_t)$ is smooth (respectively analytic) in $t$.
\end{itemize}
\end{definition}

In the previous definition we consider $\Omega^{\leqslant 1} \times \cA_{P_t} \subset \Omega^3_\CC \times \cA_{\underline{P}}$, where $\cA_{\underline{P}}$ denotes the space of connections on $\underline{P}$. 
When the underlying principal bundle $P$ is fixed, a string algebroid $(Q,P,\rho)$ can always be deformed by the action of the additive group $H^1(\Omega^{2,0}_{cl})/I_{[P]}$ (see Proposition \ref{prop:lescEindcx}). The main question we address here is: given a deformation $P_t$ of the underlying principal bundle $P$, can we find a string algebroid deformation of $(Q,P,\rho)$ of the form $(Q_t,P_t,\rho_t)$? Our first result provides an affirmative answer to this question for the case of $\partial \dbar$-manifolds and complex reductive Lie group $G$. The proof relies on Corollary \ref{cor:ddbar}.

\begin{proposition}\label{prop:Stringdeformation}
Let $X$ be a compact complex manifold satisfying the $\partial\dbar$-Lemma and $G$ be a reductive complex Lie group. Let $(Q,P,\rho)$ be a string algebroid over $X$ with structure $G$. Let $(P_t)_{t\in \Lambda}$ a deformation of the underlying holomorphic principal bundle. Then, there exists a string algebroid deformation $(Q_t,P_t,\rho_t)_{t\in \Lambda}$ with underlying principal bundle $P_t$ for all $t$.
\end{proposition}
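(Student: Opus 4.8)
The plan is to realise the deformation through families of representative triples $(P_t,H_t,\theta_t)$ in the sense of Proposition \ref{prop:classification}. It suffices to produce a smooth (resp.\ analytic) family of connections $\theta_t\in\mathcal{A}_{P_t}$ on the fixed smooth bundle $\underline P$, together with a smooth (resp.\ analytic) family $H_t\in\Omega^{\leqslant 1}$ satisfying $dH_t + c(F_{\theta_t}\wedge F_{\theta_t})=0$ and reducing at $t=0$ to a triple $(P,H,\theta)$ representing $(Q,P,\rho)$. Proposition \ref{prop:strholCour} then assembles these into string algebroids $(Q_t,P_t,\rho_t)$, and conditions i)--iii) of Definition \ref{def:deformation Q} hold by construction. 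The existence of $H_t$ for fixed $t$ amounts to the vanishing of $p_1^c(P_t)$ in $H^2(\Omega^{\leqslant\bullet})$, which is where Corollary \ref{cor:ddbar} enters; the real content is to obtain $H_t$ with the correct dependence on the parameter $t$.

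First I would lift the deformation data. The family $P_t$ is encoded by integrable $(0,1)$-connections $\theta_t^{0,1}$ on $\underline P$, smooth (resp.\ analytic) in $t$; since $\mathcal{A}_{P_t}$ is affine, modelled on $\Omega^{1,0}(\ad\underline P)$, these complete to a smooth (resp.\ analytic) family $\theta_t\in\mathcal{A}_{P_t}$ with $\theta_0=\theta$. Because $F_{\theta_t}^{0,2}=0$, the Chern--Weil form $\psi_t:=-c(F_{\theta_t}\wedge F_{\theta_t})$ lies in $\Omega^{\leqslant 2}$, is $d$-closed, and depends smoothly (resp.\ analytically) on $t$. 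Now all the $\theta_t$ are connections on the \emph{fixed} smooth bundle $\underline P$, so under the natural map $H^2(\Omega^{\leqslant\bullet})\to H^4(X,\CC)$ the class $[\psi_t]$ maps to the topological Pontryagin class $p_1^c(\underline P)$, which is independent of $t$; at $t=0$ the identity $dH+c(F_\theta\wedge F_\theta)=0$ shows this image vanishes. Since $X$ satisfies the $\partial\dbar$-Lemma, Corollary \ref{cor:ddbar} guarantees that this map is injective, whence $[\psi_t]=0$ in $H^2(\Omega^{\leqslant\bullet})$ for every $t$.

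The key step is then a uniform choice of primitive. Fixing a Hermitian metric on $X$ and using that $(\Omega^{\leqslant\bullet},d)$ is a \emph{fixed} elliptic complex on the compact manifold $X$, Hodge theory furnishes a Green operator $G$, a formal adjoint $d^*$ and a harmonic projection $\mathcal{H}$, all independent of $t$. As $[\psi_t]=0$ we have $\mathcal{H}\psi_t=0$, and since $d\psi_t=0$ and $G$ commutes with $d$ we obtain $\psi_t=d(d^*G\psi_t)$. Setting $H_t:=d^*G\psi_t+(H-d^*G\psi_0)\in\Omega^{\leqslant 1}$ yields $dH_t+c(F_{\theta_t}\wedge F_{\theta_t})=0$ for all $t$, with $H_0=H$; moreover $H_t$ is smooth (resp.\ analytic) in $t$ because the fixed operators $G,d^*$ are applied to the smooth (resp.\ analytic) family $\psi_t$. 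Feeding $(P_t,H_t,\theta_t)$ into Proposition \ref{prop:strholCour} produces the family $(Q_t,P_t,\rho_t)$ with underlying principal bundle $P_t$, establishing i) from $H_0=H$, $\theta_0=\theta$, ii) from the lifted deformation data, and iii) from the construction of $H_t$.

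I expect the main obstacle to be exactly this last step: solving $dH_t=\psi_t$ for a single $t$ is automatic once $[\psi_t]=0$, but a \emph{deformation} requires a primitive depending smoothly (resp.\ analytically) on the parameter. The device that overcomes this is that the complex $(\Omega^{\leqslant\bullet},d)$ does not vary with $t$, so its Green operator supplies one solution operator $\psi_t\mapsto d^*G\psi_t$ with the required regularity; the $\partial\dbar$-hypothesis is used only through Corollary \ref{cor:ddbar} to force the vanishing in $H^2(\Omega^{\leqslant\bullet})$, while the reductivity of $G$ plays no essential role in this particular construction.
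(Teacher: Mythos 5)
Your proof is correct, and it handles the crux of the argument --- producing a primitive $H_t$ of the Pontryagin forms with smooth (resp.\ analytic) dependence on $t$ --- by a genuinely different mechanism than the paper. The paper fixes a reduction of $\underline{P}$ to a maximal compact subgroup (this is where the reductivity of $G$ enters) and works with the family of Chern connections $\theta_{h,t}$, so that the Pontryagin forms are of type $(2,2)$; it then transgresses their difference by the explicit Chern--Simons form $T_t$, corrects the unwanted $(0,3)$ and $(1,2)$ components by two separate applications of the $\partial\dbar$-Lemma, and finally obtains smooth dependence on $t$ by solving the resulting $\partial\dbar$-equations canonically with the Green operator of the Bott--Chern Laplacian. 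You instead stay entirely inside the fixed elliptic complex $(\Omega^{\leqslant\bullet},d)$: an arbitrary affine completion of $\theta_t^{0,1}$ (concretely, $\theta_t := \theta + (\theta_t^{0,1}-\theta^{0,1})$, with $F_{\theta_t}^{0,2}=0$ since the $(0,2)$-curvature depends only on the integrable $(0,1)$-part) already places $\psi_t = -c(F_{\theta_t}\wedge F_{\theta_t})$ in $\Omega^{\leqslant 2}$; the $\partial\dbar$-hypothesis is consumed once and for all through the injectivity of $H^2(\Omega^{\leqslant\bullet})\to H^4(X,\CC)$ of Corollary \ref{cor:ddbar}, forcing $[\psi_t]=0$; and the Hodge theory of $(\Omega^{\leqslant\bullet},d)$ --- whose ellipticity the paper records after \eqref{eq:cxleqk} --- yields the canonical primitive $d^*G\psi_t$, so the smooth/analytic dependence is automatic because $d^*G$ is a fixed continuous linear operator applied to a regular family. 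Your route buys brevity (one Hodge-theoretic step replaces the type-by-type corrections and the separate smoothness argument) and, notably, never uses the reductivity of $G$ nor the metric reduction, so it actually proves the statement for any complex Lie group with bi-invariant pairing $c$; it is also, amusingly, the proof that literally ``relies on Corollary \ref{cor:ddbar}'', as the paper announces just before its own proof, whereas the written argument there invokes the $\partial\dbar$-Lemma directly. What the paper's approach buys in exchange is explicitness: the Chern--Simons transgression exhibits $H_t$ concretely in terms of $a_t = \theta_{h,t}-\theta_{h,0}$, in the same form that governs the gauge action elsewhere (cf.\ Lemma \ref{lem:gaugeaction}). Only two cosmetic points: with your sign convention $[\psi_t]$ maps to $-p_1^c(\underline{P})$ rather than $p_1^c(\underline{P})$ (harmless, since only its vanishing is used), and the identity $\psi_t = d(d^*G\psi_t)$ should be justified by $\psi_t = \mathcal{H}\psi_t + dd^*G\psi_t + d^*Gd\psi_t$ together with $d\psi_t=0$ and $\mathcal{H}\psi_t=0$, which is exactly what you indicate.
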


\begin{proof}
Let $(H,\theta) \in \Omega^{\leqslant 1} \times \cA_{P}$ be a pair representing the holomorphic string class of $(Q,P,\rho)$ (see Proposition \ref{theo:deRhamC}). We fix a reduction $h$ of $\underline{P}$ to a maximal compact subgroup, and denote by $\theta_{h,t}$ the Chern connection of $h$ in the holomorphic principal bundle $P_t$.  Then, $(\tilde H,\theta_{h,0}) \sim (H,\theta)$, where
\begin{equation*}\label{eq:lem:BCdeformation}
\tilde H = H + CS(\theta) - CS(\theta_{h,0}) - dc(\theta \wedge \theta_{h,0}).
\end{equation*}
It suffices to show that there exists $H_t = \tilde H + \tau_t \in \Omega^{\leqslant 1}$ varying smoothly with $t$ such that
\begin{equation}\label{eq:dtaut}
d\tau_t + c(F_{\theta_{h,t}} \wedge F_{\theta_{h,t}}) - c(F_{\theta_{h,0}}\wedge F_{\theta_{h,0}}) = 0.
\end{equation}
We first argue that there exists one such $\tau_t$ for each $t$, and then prove the smooth dependence on the parameter. To prove this, notice that,
$$
c(F_{\theta_{h,t}} \wedge F_{\theta_{h,t}}) - c(F_{\theta_{h,0}}\wedge F_{\theta_{h,0}}) = dT_t
$$
with
$$
T_t = 2c(a_t \wedge F_{\theta_{h,0}}) + c(a_t \wedge d^{\theta_{h,0}} a_t) + \frac{1}{3}c(a_t,[a_t,a_t]),
$$
and $a_t =  \theta_{h,t} - \theta_{h,0}  = \Omega^{1}_\CC(\ad \underline{P})$. Since $dT_t$ is of type $(2,2)$, furthermore we have
$$
d T_t = \partial T_t^{1,2} + \dbar T_t^{2,1}.
$$
We define
$$
\tilde \tau_t = - T^{3,0 + 2,1 + 1,2}_t,
$$
so that
$$
d \tilde \tau_t + dT_t = d T^{0,3}_t.
$$
Since $\dbar T^{0,3}_t = 0$, by the $\partial\dbar$-Lemma there exists $b_t \in \Omega^{0,2}$ such that
$$
\partial T^{0,3}_t = \partial \dbar b_t.
$$
Setting now $\hat \tau_t = \tilde \tau_t + \partial b_t$, we obtain $d \hat \tau_t + dT_t = \dbar \partial b^{0,2}_t + \partial T^{0,3}_t = 0$. Using again that $dT_t \in \Omega^{2,2}$, it follows that $\dbar \hat \tau^{1,2}_t = 0$. By the $\partial\dbar$-Lemma there exists $\chi_t \in \Omega^{1,1}$, such that
$$
\partial \hat \tau^{1,2}_t = \partial \dbar \chi_t.
$$
Setting $\tau_t = \hat \tau_t^{3,0} + \hat \tau_t^{2,1} - \partial \chi_t$, it follows that $\tau_t$ satisfies \eqref{eq:dtaut} as required.

The final step of the proof is to show that we can choose $b_t$ and $\chi_t$ varying smoothly with $t$. Considering a hermitian metric $g$ on $X$, we use the decomposition
$$
\Om^{1,3} = \cH^{1,3}_{BC}\oplus \Im (\del \delb) \oplus \Im(\del^* \oplus \delb^*).
$$
induced by the (elliptic) Bott-Chern laplacian (see \cite{Sch})
$$
\Delta^{BC} = \del\delb (\del\delb)^* + \dbar^* \partial^* \del\delb + \dbar^* \partial \del^* \delb + \del^* \delb \delb^* \del + \dbar^* \delb +  \partial^* \del. 
$$
Let $G$ be the corresponding Green operator and define
$$
x_t = G(\partial T^{0,3}_t),
$$
which varies smoothly in $t$ since $\partial T^{0,3}_t$ does. By the $\partial\dbar$-Lemma we have $\partial T^{0,3}_t \in \Im (\del \delb)$, and therefore we can assume that $x_t \in \Im (\del \delb)$. Consequently, we have $\del x_t = \delb x_t=0$ and thus 
$\del\delb (\del\delb)^* x_t = \Delta^{BC} x_t$. Setting now $b_t = (\del\delb)^* x_t$, which is also smooth in $t$, we have
$$
\partial \dbar b_t = \Delta^{BC} \circ G(\partial T^{0,3}_t) = \partial T^{0,3}_t,
$$
as required. Finaly, the proof for $\chi_t$ is analogous.
\end{proof}

In general, when the complex manifold $X$ does not satisfy the $\partial\dbar$-Lemma, we cannot expect the statement of Proposition \ref{prop:Stringdeformation} to hold. Our next goal is to construct a first-order obstruction to deform a string algebroid along with the underlying principal bundle.

\subsection{Sheaf resolutions and a first-order obstruction}

Fix a string algebroid $(Q,P,\rho)$ over $X$ with structure group $G$. The aim of this section is to obtain a first-order obstruction to deform $(Q,P,\rho)$, given a deformation of $P$. We do this in a systematic way, by means of constructing a differential graded algebra ruling the deformation theory. Following Kodaira and Spencer \cite{KS}, such a differential graded algebra can be obtained from a fine resolution of the sheaf of automorphisms of $(Q,P,\rho)$. 

By Proposition \ref{theo:deRhamC} we can fix a representative $(H,\theta)\in j^{-1}([P])$ of the associated holomorphic string class. Denote by $F=F_\theta$ the curvature of $\theta$. Up to the equivalence relation $\sim$ defined by (\ref{eq:anomaly}), by choosing the Chern connection of a fixed hermitian structure on $P$, we can assume that $F^{2,0}=0$.
Let $\cG_P$ be the group of holomorphic gauge transformations of $P$, with Lie algebra
$$
\Lie \;\cG_P =\lbrace \alpha\in \Om^0(\ad P)\:\vert\: \delb^\theta \alpha=0 \rbrace.
$$
For simplicity, we will denote $(Q,P,\rho)$ just by $Q$.

From Lemma \ref{lemma:Autstring}, the group of automorphisms of $Q$ is given by the set of pairs
$ (g, B)\in \cG_P\times \Om^{2,0}$ such that
\begin{equation*}
d B = 2 c (a^g, F) + c( a^g, d^\theta a^g) +\frac{1}{3} c(a^g, [a^g,a^g]),
\end{equation*}
where we recall that $a^g:=g^{-1}\theta - \theta$, with group structure given by (\ref{eq:groupproduct}).  The sheaf of Lie algebras $\faut_Q$ associated to this group is then given, for an open set $U\subset X$, by
$$
\faut_Q(U)=\lbrace (\alpha,b)\in  \Om^0(U,\ad P) \times \Om^{2,0}(U)\: \vert\:\: \delb ^\theta \alpha=0,\: d(b-2c(\alpha F))=0 \rbrace.
$$
Note that, using $\delb ^\theta \alpha=0$ combined with the Bianchi identity, the second condition for the pair $(\alpha,b)$ can we written as $db-2c(\partial^{\theta}\alpha \wedge F)=0$.

Denote by $\Omega^{2,0}_{cl}$ the sheaf of closed $(2,0)$-forms and by $\faut_{P}$ the sheaf of holomorphic sections of $\ad P$.
From the proof of Lemma \ref{lemma:exact-sequence-holomorphic},
we deduce the following exact sequence of sheaves:
\begin{equation}
\label{seq:decomp sheaf Q to P}
0 \to  \Omega^{2,0}_{cl} \to \faut_Q \to \faut_{P} \to 0.
\end{equation}
\begin{corollary}
 The sheaf $\faut_Q$ is acyclic.
\end{corollary}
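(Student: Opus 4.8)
The plan is to deduce the acyclicity of $\faut_Q$ directly from the short exact sequence \eqref{seq:decomp sheaf Q to P}, which identifies $\faut_Q$ as an extension of $\faut_P$ by $\Omega^{2,0}_{cl}$. The key observation is that both the sub-sheaf and the quotient sheaf in this sequence are already known to be acyclic, so the standard long exact sequence in cohomology will squeeze the cohomology of $\faut_Q$ to zero in positive degrees.

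First I would recall why the two outer terms are acyclic. The sheaf $\faut_P$ is the sheaf of holomorphic sections of the coherent (indeed locally free) sheaf $\ad P$; on the affine/Stein opens of a suitable cover, or more precisely by the usual Cartan--Serre vanishing for coherent analytic sheaves, $\faut_P$ has vanishing higher cohomology on such opens, so it is acyclic in the sense required for the fine-resolution argument. More to the point for the Kodaira--Spencer machinery, $\Omega^{2,0}_{cl}$ was shown in Lemma \ref{lemma:omega20k} to admit the soft (hence acyclic) resolution $0 \to \Omega^{2,0}_{cl} \to \Omega^{\leqslant \bullet}$, giving $H^k(\Omega^{2,0}_{cl}) \cong H^k(\Omega^{\leqslant \bullet})$, and these groups vanish for the local computation at hand. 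Thus both $H^{k}(\Omega^{2,0}_{cl})$ and $H^{k}(\faut_P)$ vanish for $k \geqslant 1$ on the relevant opens.

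Next I would write down the long exact cohomology sequence associated to \eqref{seq:decomp sheaf Q to P}:
\begin{equation*}
\cdots \to H^{k}(\Omega^{2,0}_{cl}) \to H^{k}(\faut_Q) \to H^{k}(\faut_P) \to H^{k+1}(\Omega^{2,0}_{cl}) \to \cdots
\end{equation*}
For each $k \geqslant 1$ the two flanking groups $H^{k}(\Omega^{2,0}_{cl})$ and $H^{k}(\faut_P)$ vanish, so exactness forces $H^{k}(\faut_Q) = 0$. This is precisely the statement that $\faut_Q$ is acyclic, which is what the subsequent Kodaira--Spencer construction of the controlling differential graded Lie algebra requires.

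The main point to be careful about, rather than a genuine obstacle, is making sure the ambient category of cohomology is the right one: the acyclicity of $\Omega^{2,0}_{cl}$ proved earlier is via a fine soft resolution, whereas the acyclicity of $\faut_P$ rests on the coherence of $\ad P$ and Stein/Cartan-type vanishing, so one should state at the outset the class of opens (a suitable Stein or Leray cover) on which both vanishings hold simultaneously, so that the two inputs into the long exact sequence are genuinely compatible. Once that bookkeeping is fixed, the conclusion is immediate from the long exact sequence and needs no further computation.
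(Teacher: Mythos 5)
Your proposal is correct and follows essentially the same route as the paper: restrict the short exact sequence $0 \to \Omega^{2,0}_{cl} \to \faut_Q \to \faut_P \to 0$ to suitable small opens, invoke the acyclicity of the two outer sheaves, and squeeze $H^k(\faut_Q)=0$ for $k\geqslant 1$ out of the long exact sequence. The only cosmetic difference is that the paper works on polydiscs and cites the Poincar\'e--Dolbeault Lemma for both outer terms, where you invoke Cartan--Serre/Stein vanishing for $\faut_P$; your closing remark about fixing a common class of opens is exactly the bookkeeping the paper handles by restricting to a polydisc $U$.
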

\begin{proof}
The sheaves $\faut_{P}$ and $\Omega^{2,0}_{cl}$ are acyclic by the Poincar\'e-Dolbeault Lemma.
For a given polydisc $U\subset X$, the short exact sequence (\ref{seq:decomp sheaf Q to P})
implies the short exact sequence of sheaves
$$
0 \to  (\Omega^{2,0}_{cl})_{\vert U} \to (\faut_Q)_{\vert U} \to (\faut_{P})_{\vert U} \to 0.
$$
From the associated long exact sequence, and as $\faut_{P}$ and $\Omega^{2,0}_{cl}$ are acyclic,
we deduce that $\faut_Q$ is acyclic.
\end{proof}
Following \cite{KS}, the \v Cech cohomology of $\faut_Q$ describes deformations of $Q$.
We will build a fine resolution for this sheaf, using the fine resolutions
\begin{equation}
 \label{seq:resolution sheaf P}
 0\to \faut_{P} \lra{} \Om^{0,0}(\ad(P))\lra{\delb^\theta}\Om^{0,1}(\ad(P)) \lra{\delb^\theta}\ldots
\end{equation}
and
\begin{equation}
 \label{seq:resolution sheaf H}
 0\to \Omega^{2,0}_{cl} \lra{} \Om^{\leqslant 0}\lra{d}\Om^{\leqslant 1} \lra{d}\ldots
\end{equation}
We set
$$
\fL_Q^\bullet := \Om^{0,\bullet}(\ad P) \times \Om^{\leqslant \bullet},
$$
and denote by $\pi_1$, $\pi_2$ the projections onto the first and second factors, respectively. We introduce the  operator
\begin{equation*}
 \label{eq:differential Q}
 \begin{array}{cccc}
  d_Q : & \fL_{Q}^\bullet & \rightarrow & \fL_{Q}^{\bullet+1} \\
           &  (\alpha,b)           & \mapsto & (\delb^\theta \alpha,db-2c(\del^\theta \alpha\wedge F)).
 \end{array}
\end{equation*}
%
Note that $\faut_Q(X)=\ker d_Q \colon \fL_{Q}^0 \to \fL_{Q}^1$.
\begin{lemma}
\label{lem:differential Q}
 The operator $d_Q$ satisfies $d_Q\circ d_Q=0$.
\end{lemma}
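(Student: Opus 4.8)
The plan is to verify $d_Q\circ d_Q=0$ componentwise on a pair $(\alpha,b)\in\fL_Q^\bullet=\Om^{0,\bullet}(\ad P)\times\Om^{\leqslant\bullet}$, treating the two factors separately. Writing $d_Q(\alpha,b)=(\delb^\theta\alpha,\,db-2c(\del^\theta\alpha\wedge F))$, the $\Om^{0,\bullet}(\ad P)$-component of $d_Q(d_Q(\alpha,b))$ is just $\delb^\theta(\delb^\theta\alpha)=(\delb^\theta)^2\alpha$. Since $\theta^{0,1}$ induces the holomorphic structure of $P$, we have $F^{0,2}=0$, and hence $(\delb^\theta)^2=[F^{0,2}\wedge\,\cdot\,]=0$, so the first component vanishes immediately.

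First I would turn to the $\Om^{\leqslant\bullet}$-component, which is the substantive part. Using $d^2=0$ it expands to
$$\pi_2\,d_Q^2(\alpha,b)=d\big(db-2c(\del^\theta\alpha\wedge F)\big)-2c\big(\del^\theta(\delb^\theta\alpha)\wedge F\big)=-2\,d\,c(\del^\theta\alpha\wedge F)-2c(\del^\theta\delb^\theta\alpha\wedge F).$$
The idea is to push $d$ inside the pairing via the graded Leibniz rule $d\,c(\sigma\wedge\tau)=c(d^\theta\sigma\wedge\tau)+(-1)^{\deg\sigma}c(\sigma\wedge d^\theta\tau)$ and then kill the term $c(\del^\theta\alpha\wedge d^\theta F)$ using the Bianchi identity $d^\theta F=0$. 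What survives is $-2c(d^\theta\del^\theta\alpha\wedge F)-2c(\del^\theta\delb^\theta\alpha\wedge F)$, and since $d^\theta\del^\theta=(\del^\theta)^2+\delb^\theta\del^\theta$ this collects into
$$-2c\big(\big((\del^\theta)^2+\delb^\theta\del^\theta+\del^\theta\delb^\theta\big)\alpha\wedge F\big)=-2c\big([F\wedge\alpha]\wedge F\big),$$
where the last equality uses the curvature identity $(d^\theta)^2=[F\wedge\,\cdot\,]$ together with $F^{0,2}=0$, which is exactly what accounts for the missing $(\delb^\theta)^2=[F^{0,2}\wedge\,\cdot\,]$ term.

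The hard part will be the final algebraic cancellation $c([F\wedge\alpha]\wedge F)=0$, for which I would invoke the $\ad$-invariance of $c$. Moving the bracket across the pairing according to the invariance relation $c([\sigma\wedge\tau]\wedge\upsilon)=-(-1)^{\deg\sigma\,\deg\tau}c(\tau\wedge[\sigma\wedge\upsilon])$, applied with $\sigma=\upsilon=F$ and $\tau=\alpha$, identifies $c([F\wedge\alpha]\wedge F)$ with $\pm\,c(\alpha\wedge[F\wedge F])$; and because $F$ has \emph{even} degree, the graded antisymmetry $[\mu\wedge\nu]=-(-1)^{\deg\mu\,\deg\nu}[\nu\wedge\mu]$ forces $[F\wedge F]=0$, so the second component vanishes as well. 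The only genuine delicacy is the sign bookkeeping in the Leibniz and invariance identities, but the conclusion is robust: the relevant term is proportional to $[F\wedge F]$, which is zero regardless of the precise sign. I would also remark that the normalization $F^{2,0}=0$ fixed earlier is not actually needed for this computation—it merely keeps the type decompositions transparent—so that $d_Q\circ d_Q=0$ holds for the connection as chosen.
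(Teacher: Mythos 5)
Your proof is correct and follows essentially the same route as the paper's: the first component vanishes since $F^{0,2}=0$ makes $\delb^\theta$ a differential, and for the second component you combine $d^2=0$, the Bianchi identity $d^\theta F=0$, and the curvature identity to reduce everything to $c([F\wedge\alpha]\wedge F)$, which vanishes by $\ad$-invariance of $c$ (the paper phrases this as ``$F$ being an $\ad P$-valued $2$-form''; your explicit reduction to $[F\wedge F]=0$ spells out the same mechanism). Your closing observation that only $F^{0,2}=0$, and not the normalization $F^{2,0}=0$, is needed for this particular computation is accurate and a harmless sharpening of the paper's statement ``using $dF=0$ and $F=F^{1,1}$''.
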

\begin{proof}
 Let $(\alpha,b)\in\fL_Q^k$. As $\delb^\theta $ is a differential, the first component of $d_Q\circ d_Q(\alpha,b)$ vanishes.
 For the second component, using $dF=0$ and $F=F^{1,1}$,
  \begin{align*}
\pi_2(d_Q\circ d_Q(\alpha,b))  = {}& d ( db -2c(\del^\theta \alpha \wedge F) ) - 2 c(\del^\theta\delb^\theta \alpha  \wedge F)\\
                         = {}& -2c((d^\theta\del^\theta \alpha  + \del^\theta\delb^\theta \alpha ) \wedge F)\\
                         = {}& -2 c ([F,\alpha ]\wedge F),
\end{align*}
which vanishes by $\ad$-invariance of $c$ and $F$ being an $\ad P$-valued $2$-form.
\end{proof}

The complex $(\cL^\bullet_Q,d_Q)$ fits into an exact sequence of differential graded algebras:
\begin{equation}
\label{seq:decomp diff graded Q to P}
0 \to  (\Om^{\leqslant \bullet},d) \to (\cL^\bullet_Q,d_Q) \to (\Om^{0,\bullet}(\ad P),\dbar^\theta) \to 0,
\end{equation}
where the first map is the obvious inclusion and the second map is induced by the projection $\pi_1$. Using the short exact sequence (\ref{seq:decomp sheaf Q to P}), and the fine resolutions (\ref{seq:resolution sheaf P})  and  (\ref{seq:resolution sheaf H}), the following is straightforward:

\begin{lemma}
\label{lem:resolution sheaf Q}
 The complex
 \begin{equation*}
 \label{seq:resolution sheaf Q}
 0\to \faut_Q \lra{} \fL_Q^0 \lra{d_Q} \fL_Q^1 \lra{d_Q}\ldots
\end{equation*}
provides a fine resolution of $\faut_Q$. Consequently, the \v Cech cohomology of $\faut_Q$ is isomorphic to the cohomology of $(\cL^\bullet_Q, d_Q)$.
\end{lemma}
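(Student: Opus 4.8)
The plan is to derive both claims from the degreewise-split short exact sequence of complexes (\ref{seq:decomp diff graded Q to P}), exploiting that its two outer complexes are already the fine resolutions (\ref{seq:resolution sheaf H}) and (\ref{seq:resolution sheaf P}) of $\Omega^{2,0}_{cl}$ and $\faut_P$. Two things must be established: that $(\fL_Q^\bullet, d_Q)$ is an exact complex with $\mathcal{H}^0 = \faut_Q$, so that it resolves $\faut_Q$, and that each $\fL_Q^k$ is a fine sheaf, so that the resolution may be used to compute cohomology.

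Fineness is immediate, since $\fL_Q^k = \Om^{0,k}(\ad P) \times \Om^{\leqslant k}$ consists of smooth sections of vector bundles and hence admits multiplication by a partition of unity; moreover $d_Q \circ d_Q = 0$ is Lemma \ref{lem:differential Q}, so $(\fL_Q^\bullet, d_Q)$ is a genuine complex of fine sheaves. For the degree-zero cohomology sheaf I would simply unwind the definitions: $\mathcal{H}^0(\fL_Q^\bullet)$ is the kernel of $d_Q \colon \fL_Q^0 \to \fL_Q^1$, whose sections over an open set $U$ are the pairs $(\alpha,b)$ with $\delb^\theta \alpha = 0$ and $db - 2c(\del^\theta \alpha \wedge F) = 0$; as observed below (\ref{seq:decomp sheaf Q to P}), the second condition is — via $\delb^\theta \alpha = 0$ and the Bianchi identity — equivalent to $d(b - 2c(\alpha F)) = 0$, so this kernel sheaf is exactly $\faut_Q$.

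For the positive degrees I would pass to the long exact sequence of cohomology sheaves induced by (\ref{seq:decomp diff graded Q to P}), which is split exact in each degree by the product structure of $\fL_Q^\bullet$. Because (\ref{seq:resolution sheaf P}) and (\ref{seq:resolution sheaf H}) are resolutions, the cohomology sheaves of $(\Om^{0,\bullet}(\ad P), \dbar^\theta)$ and of $(\Om^{\leqslant \bullet}, d)$ vanish in every positive degree; the long exact sequence then squeezes $\mathcal{H}^k(\fL_Q^\bullet)$ between two vanishing terms for each $k \geqslant 1$, forcing $\mathcal{H}^k(\fL_Q^\bullet) = 0$, while its degree-zero segment reproduces precisely the short exact sequence (\ref{seq:decomp sheaf Q to P}), consistently with the identification above. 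Hence $0 \to \faut_Q \to \fL_Q^\bullet$ is a fine resolution.

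The remaining assertion is then the abstract de Rham theorem: a fine resolution is in particular acyclic, so the sheaf cohomology (equivalently, on the paracompact manifold $X$, the \v Cech cohomology) of $\faut_Q$ is computed by the complex of global sections, yielding $\check{H}^k(X, \faut_Q) \cong H^k(\cL_Q^\bullet, d_Q)$ with $\cL_Q^\bullet = \Gamma(X, \fL_Q^\bullet)$. I do not anticipate any real obstacle here, since the argument is pure homological bookkeeping; the only point to keep honest is that the connecting maps in the long exact sequence land in groups that vanish, which is automatic once the two outer complexes are known to be acyclic in positive degree.
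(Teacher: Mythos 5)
Your proposal is correct and follows essentially the route the paper intends: the paper declares the lemma ``straightforward'' from the short exact sequence (\ref{seq:decomp sheaf Q to P}) and the fine resolutions (\ref{seq:resolution sheaf P}) and (\ref{seq:resolution sheaf H}), and your argument---identifying $\Ker(d_Q\colon \fL_Q^0 \to \fL_Q^1)$ with $\faut_Q$ via the Bianchi identity, then killing the higher cohomology sheaves by the long exact sequence attached to the sheaf-level version of (\ref{seq:decomp diff graded Q to P})---is exactly the omitted bookkeeping. The fineness observation and the appeal to the abstract de Rham theorem for the \v Cech comparison are likewise the standard steps the paper takes for granted.
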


Our next result provides the desired obstruction to the deformation of string algebroids on a general complex manifold. Observe that the sequence (\ref{seq:decomp diff graded Q to P}) induces a long exact sequence in cohomology. Then, a direct computation shows:

\begin{proposition}
 \label{lem:boundary op Q to P}
 The boundary maps $\delta_P$ in the long exact sequence in cohomology associated to (\ref{seq:decomp diff graded Q to P}) are
 explicitly given by:
 \begin{equation*}
  \label{eq:boundary op Q to P}
  \begin{array}{cccc}
  \delta_P : & H^{0,k}(\ad P) & \rightarrow & H^{k+1}(\Omega^{\leqslant \bullet}) \\
             & [ \alpha ]               & \mapsto     &  [2c (\del^\theta \alpha \wedge F )].
  \end{array}
 \end{equation*}
Thus, a necessary condition for an infinitesimal deformation $[\alpha]\in H^{0,1}(\ad P)$ of $P$
to be lifted to an infinitesimal deformation of $Q$ is the vanishing of the class $\delta_P([\alpha]) \in H^{2}(\Omega^{\leqslant \bullet})$.
\end{proposition}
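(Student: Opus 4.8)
The plan is to read off the first assertion as the standard snake-lemma computation of the connecting map attached to the short exact sequence of complexes (\ref{seq:decomp diff graded Q to P}), and then to deduce the obstruction statement from exactness of the induced long exact sequence together with Lemma \ref{lem:resolution sheaf Q}. Recall that in (\ref{seq:decomp diff graded Q to P}) the inclusion $\iota \colon (\Om^{\leqslant \bullet},d) \to (\cL^\bullet_Q,d_Q)$ is $b \mapsto (0,b)$, the surjection is $\pi_1 \colon (\alpha,b) \mapsto \alpha$, and the differential is $d_Q(\alpha,b) = (\delb^\theta \alpha,\, db - 2c(\del^\theta \alpha \wedge F))$.

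First I would fix a $\delb^\theta$-closed representative $\alpha \in \Om^{0,k}(\ad P)$ of a class $[\alpha] \in H^{0,k}(\ad P)$ and lift it along $\pi_1$ by the obvious choice $(\alpha,0) \in \cL^k_Q$. Applying the differential and using $\delb^\theta \alpha = 0$ gives $d_Q(\alpha,0) = (0,\,-2c(\del^\theta \alpha \wedge F))$, which visibly lies in the image of $\iota$, so the recipe for the connecting map returns its $\Om^{\leqslant \bullet}$-component. By construction this component is automatically $d$-closed, which one can also verify directly from Lemma \ref{lem:differential Q}. Thus $\delta_P([\alpha]) = [2c(\del^\theta \alpha \wedge F)] \in H^{k+1}(\Om^{\leqslant \bullet})$, the sign being a matter of the convention chosen for the connecting homomorphism and immaterial for what follows. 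Two bookkeeping points remain: that the output genuinely lands in $\Om^{\leqslant k+1}$, and that the class is independent of the representative $\alpha$. The first uses the normalization $\theta \in \cA_P$ with $F^{2,0}=0$, so that $F = F^{1,1}$ and hence $c(\del^\theta \alpha \wedge F) \in \Om^{2,k+1} \subset \Om^{\leqslant k+1}$; the second is the usual well-definedness of boundary maps, since replacing $\alpha$ by $\alpha + \delb^\theta\beta$ alters the $\Om^{\leqslant \bullet}$-component only by an exact form.

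For the consequence I would invoke Lemma \ref{lem:resolution sheaf Q} and the Dolbeault resolution (\ref{seq:resolution sheaf P}) to identify $H^k(\cL^\bullet_Q,d_Q) \cong H^k(\faut_Q)$ and $H^{0,k}(\ad P) \cong H^k(\faut_P)$, under which $\pi_1$ induces the restriction $H^1(\faut_Q) \to H^1(\faut_P)$. Following Kodaira--Spencer \cite{KS}, infinitesimal deformations of $Q$ and of $P$ are governed by $H^1(\faut_Q)$ and $H^1(\faut_P) = H^{0,1}(\ad P)$ respectively, and the forgetful map sending a deformation of $Q$ to the underlying deformation of $P$ is precisely this restriction. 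Hence an infinitesimal deformation $[\alpha]$ of $P$ lifts to one of $Q$ exactly when $[\alpha]$ lies in the image of the restriction, which by exactness of the long exact sequence associated to (\ref{seq:decomp diff graded Q to P}) equals $\ker \delta_P$. Therefore the vanishing of $\delta_P([\alpha]) = [2c(\del^\theta \alpha \wedge F)]$ is a necessary condition, as claimed.

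The computation of $\delta_P$ itself is routine once the lift $(\alpha,0)$ is used; I expect the only delicate points to be the type bookkeeping that forces the output into $\Om^{\leqslant \bullet}$ (where the choice $F = F^{1,1}$ enters) and, more conceptually, the identification of the algebraic lifting condition $[\alpha] \in \operatorname{Im}\pi_1$ with the geometric statement that the deformation of $P$ extends to a deformation of the string algebroid $Q$. The latter is the real content, and it rests on the fact, established in Lemma \ref{lem:resolution sheaf Q}, that $(\cL^\bullet_Q,d_Q)$ computes the cohomology of $\faut_Q$, the sheaf whose first cohomology controls deformations of $Q$.
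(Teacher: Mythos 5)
Your proposal is correct and follows essentially the same route as the paper: the paper's proof consists precisely of applying the snake lemma to the short exact sequence of complexes (\ref{seq:decomp diff graded Q to P}) and performing the direct computation you spell out with the lift $(\alpha,0)$, where $d_Q(\alpha,0)=(0,-2c(\del^\theta\alpha\wedge F))$ lands in the image of $\Om^{\leqslant\bullet}$ (the sign being, as you note, a convention immaterial to the vanishing condition), while the obstruction statement follows from exactness together with the identification in Lemma \ref{lem:resolution sheaf Q} of $(\fL^\bullet_Q,d_Q)$ with the cohomology of $\faut_Q$, exactly as you argue. Your added bookkeeping — the type check using $F=F^{1,1}$ so that $c(\del^\theta\alpha\wedge F)\in\Om^{2,k+1}\subset\Om^{\leqslant k+1}$, and well-definedness under $\alpha\mapsto\alpha+\delb^\theta\beta$ — is consistent with the paper's normalization $F^{2,0}=0$ and fills in details the paper leaves implicit.
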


The obstruction given by the previous result can be regarded as the vanishing of the infinitesimal variation of the holomorphic Pontryagin class $p_1^c([P]) \in  H^{2}(\Omega^{\leqslant \bullet})$ in \eqref{eq:lescEind2cx}. Notice that $c(\del^\theta \alpha \wedge F) = \partial(c (\alpha F))$, and therefore on a $\partial\dbar$-manifold the map $\delta_P$ vanishes identically.

\subsection{Differential graded Lie algebra and Maurer-Cartan equation}


In this section we promote our resolution $(\cL^\bullet_Q, d_Q)$ of the sheaf of automorphisms $\faut_Q$ in Lemma \ref{lem:resolution sheaf Q} to a differential graded Lie algebra, or DGLA for short, in such a way that the corresponding Maurer-Cartan equation describes the integrable deformations of $(Q,P,\rho)$.

We start by recalling the definition of a DGLA.

\begin{definition}
\label{def:DGLA}
 A graded Lie algebra $(\fL^\bullet, [\cdot,\cdot])$ over $\C$ is a $\ZZ$-graded vector space $\fL=\bigoplus_{k\geq 0} \fL^k$ together with
 a family of bilinear maps $[\cdot,\cdot]: \fL^k\times \fL^l \to \fL^{k+l}$ satisfying, for any $(x,y,z)\in \fL^k\times \fL^l\times \fL^m$,
 \begin{enumerate}
  \item[i)] Graded skew-commutativity: $[x,y]+ (-1)^{kl} [y,x] = 0$.
  \item[ii)] Jacobi identity: $(-1)^{km}[x,[y,z]] + (-1)^{lk}[y,[z,x]]+(-1)^{ml}[z,[x,y]]=0$.
 \end{enumerate}
A differential graded Lie algebra $(\fL^\bullet,d,[\cdot,\cdot])$ is a graded Lie algebra $(\fL^\bullet,[\cdot,\cdot])$
together with a $\C$-linear map $d:\fL^\bullet \to \fL^{\bullet + 1}$, called differential, satisfying, for any $(x,y)\in \fL^k\times \fL^l$:
\begin{enumerate}
 \item[iii)] Differential: $d\circ d=0$.
 \item[iv)] Derivation of degree $1$: $d[x,y]=[dx,y]+ (-1)^{k}[x,dy]$.
\end{enumerate}
\end{definition}

To a DGLA, one associates a deformation equation, or Maurer-Cartan equation.

\begin{definition}
 \label{def:MC equation}
 Given a differential graded Lie algebra $(\fL^\bullet,d,[\cdot,\cdot])$, the Maurer-Cartan equation is
 \begin{equation*}
  \label{eq:MC equation}
  dx+\frac{1}{2}[x,x]=0, \: \textrm{ for } \: x\in \fL^1.
 \end{equation*}
\end{definition}

We will use the following results \cite{Kob,G2}:

\begin{enumerate}
\item The deformation theory for complex structure on $P$ is described by the DGLA $(\Om^{0,\bullet}(\ad P), \dbar^\theta,[\cdot,\cdot]_{P})$, with bracket extending the Lie bracket on $\fg$. In local coordinates, with a basis $(e_i)_{i=1..r}$ for $\fg$, given $g_1=g_1^i\otimes e_i\in\Om^{0,k}(\ad P)$ and
$g_2=g_2^i\otimes e_i\in\Om^{0,l}(\ad P)$, the bracket is:
\begin{equation*}
 \label{eq:bracket gs}
 [g_1,g_2]_{P}= \sum_{i,j= 1}^r g_1^i\wedge g_2^j \otimes [ e_i, e_j].
\end{equation*}
\item The deformation theory for holomorphic exact Courant algebroids is described by the abelian DGLA $(\Om^{\leqslant \bullet},d,0)$.
\end{enumerate}
In order to simplify the notation, from now on we will drop the subscripts on the brackets, the meaning being clear from the context.
To obtain a DGLA for the deformation theory of $Q$, we extend the Lie bracket on $\faut_Q(X)$
to $\fL_Q^\bullet$. An explicit formula for the bracket on $\faut_Q(X)$ follows from \eqref{eq:groupproduct}.

\begin{lemma}
The Lie bracket on $\faut_Q(X)$ is given by the formula
 \begin{equation*}
  \label{eq:bracket aut Q}
[(\alpha ,b), (\alpha ',b')]=([\alpha ,\alpha '], \: 2c(\del^\theta \alpha  \wedge \del^\theta \alpha ')).
 \end{equation*}
\end{lemma}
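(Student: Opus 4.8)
The statement asserts an explicit formula for the Lie bracket on the Lie algebra $\faut_Q(X)$, namely
\[
[(\alpha,b),(\alpha',b')] = ([\alpha,\alpha'],\ 2c(\del^\theta\alpha\wedge\del^\theta\alpha')).
\]
Since $\faut_Q(X)$ is the Lie algebra of the group $\Aut(Q_0,P,\rho_0)$, whose elements are pairs $(g,B)$ with product given by \eqref{eq:groupproduct}, the natural approach is to differentiate the group commutator (or equivalently the adjoint action) at the identity. First I would write one-parameter subgroups $(g_s,B_s)$ and $(g'_t,B'_t)$ through the identity, with $g_s = \exp(s\alpha) + O(s^2)$, $g'_t = \exp(t\alpha') + O(t^2)$, and $B_s, B'_t$ the corresponding $(2,0)$-form components, so that $\frac{d}{ds}\big|_0 (g_s,B_s) = (\alpha,b)$ and similarly for the primed data. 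The bracket is then recovered as $\frac{\partial^2}{\partial s\,\partial t}\big|_{0}$ of the group commutator $(g_s,B_s)(g'_t,B'_t)(g_s,B_s)^{-1}(g'_t,B'_t)^{-1}$, or more economically by computing the mixed partial of the product \eqref{eq:groupproduct} and antisymmetrizing.

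The first component is immediate: the $g$-component of \eqref{eq:groupproduct} is just the pointwise product in $G$, so differentiating recovers the Lie bracket $[\alpha,\alpha']$ on $\ad P$, exactly as in the classical gauge-group computation. The content is entirely in the second ($\Omega^{2,0}$) component. I would expand the cocycle term $c(g_2^{-1}a^{g_1}\wedge a^{g_2})$ appearing in \eqref{eq:groupproduct} to second order in the parameters. Using $a^{g} = g^{-1}\theta - \theta$, to first order $a^{g_s} = -s\,\del^\theta\alpha + O(s^2)$ (the $(1,0)$-part, since $g$ is holomorphic and $a^g\in\Omega^{1,0}(\ad\underline P)$, cf. the computation after Lemma \ref{lem:AutQ0hol}), and similarly $a^{g'_t} = -t\,\del^\theta\alpha' + O(t^2)$. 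The additive $B$-terms are symmetric under exchanging the two factors and hence cancel in the antisymmetrization defining the bracket, so only the bilinear cocycle term survives. Feeding in $a^{g_s}\wedge a^{g'_t} = st\,\del^\theta\alpha\wedge\del^\theta\alpha' + O(\text{higher})$ and tracking the factor of $2$ and the sign coming from the antisymmetrization yields precisely $2c(\del^\theta\alpha\wedge\del^\theta\alpha')$.

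The main obstacle, and the place demanding care, is the bookkeeping of the cocycle term: one must correctly identify which terms in $c(g_2^{-1}a^{g_1}\wedge a^{g_2})$ are symmetric (and drop out) versus antisymmetric (and double), and track the adjoint action $g_2^{-1}\cdot$ on $a^{g_1}$, which contributes only at higher order once the linearizations $a^{g_s}, a^{g'_t}$ are already first-order in $s,t$. A convenient sanity check is to verify directly that the proposed bracket is closed within $\faut_Q(X)$, i.e. that $([\alpha,\alpha'],\,2c(\del^\theta\alpha\wedge\del^\theta\alpha'))$ satisfies the defining conditions $\dbar^\theta[\alpha,\alpha']=0$ and $d\big(2c(\del^\theta\alpha\wedge\del^\theta\alpha')-2c([\alpha,\alpha']F)\big)=0$; the former follows from $\dbar^\theta$ being a derivation of the bracket, and the latter from $\ad$-invariance of $c$ together with the Bianchi identity and $F=F^{1,1}$, exactly as in the proof of Lemma \ref{lem:differential Q}. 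This confirms both the formula and its consistency with the differential $d_Q$.
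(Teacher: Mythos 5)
Your proposal is correct and is essentially the paper's own (implicit) argument: the paper states the lemma without proof, remarking only that the formula ``follows from \eqref{eq:groupproduct}'', and differentiating that group law --- first component giving $[\alpha,\alpha']$, additive $B$-terms cancelling under antisymmetrization, and the bilinear cocycle term $c(g_2^{-1}a^{g_1}\wedge a^{g_2})$ producing $2c(\partial^\theta\alpha\wedge\partial^\theta\alpha')$ via $c(\beta\wedge\gamma)=-c(\gamma\wedge\beta)$ for $\ad P$-valued $1$-forms --- is exactly the intended computation. One harmless slip: with the paper's conventions $a^{g_s}=g_s^{-1}d^\theta g_s = +s\,\partial^\theta\alpha+O(s^2)$ rather than $-s\,\partial^\theta\alpha$ (cf.\ the infinitesimal action in Lemma \ref{lem:gaugeaction}), but since the cocycle term is bilinear the sign enters both factors and the final formula is unchanged.
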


We extend this bracket to $\fL_Q^\bullet$ by setting, for $(\alpha ,b)\in \fL_Q^k$, $(\alpha ',b')\in \fL_Q^l$:
\begin{equation}
 \label{eq:bracket Q}
 [(\alpha ,b), (\alpha ',b')]:= ([\alpha , \alpha '], (-1)^k 2c(\del^\theta \alpha  \wedge \del^\theta \alpha '))\in \fL_Q^{k+l}.
\end{equation}


We will use the following lemma.

\begin{lemma}
 \label{lem:ad invariance c}
 Let $(x,y,z)\in \Om^{0,k}(\ad P) \times \Om^{0,l}(\ad P) \times \Om^{0,m}(\ad P)$. Then
 \begin{equation*}
  \label{eq:ad invariance c}
  c(x\wedge [y,z])+ (-1)^{kl}c([y,x] \wedge z)=0.
 \end{equation*}
\end{lemma}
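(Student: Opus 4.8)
The plan is to reduce this graded identity to the pointwise, purely Lie-algebraic statement that $c$ is $\ad$-invariant, and then to carefully track the Koszul sign produced by reordering the differential-form components. Since everything in the statement is $\cO_X$-bilinear in the form factors and involves no derivatives, I would work locally in a frame: fix a basis $(e_i)_{i=1}^r$ of $\fg$ as in the bracket formula preceding this lemma, and write $x = x^i \otimes e_i$, $y = y^j \otimes e_j$, $z = z^p \otimes e_p$, with $x^i \in \Om^{0,k}$, $y^j \in \Om^{0,l}$, $z^p \in \Om^{0,m}$ (summation over repeated indices understood).

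First I would expand the two terms. Using the convention $c(\mu \otimes u \wedge \nu \otimes v) = (\mu \wedge \nu)\, c(u,v)$ together with the bracket formula $[y,z] = y^j \wedge z^p \otimes [e_j,e_p]$, I get
\[
c(x \wedge [y,z]) = x^i \wedge y^j \wedge z^p \; c(e_i,[e_j,e_p]),
\qquad
c([y,x] \wedge z) = y^j \wedge x^i \wedge z^p \; c([e_j,e_i],e_p).
\]
The key algebraic input is the infinitesimal form of the assumed bi-invariance of $c$, namely $c([u,v],w) + c(v,[u,w]) = 0$ for all $u,v,w \in \fg$; applied with $u = e_j$, $v = e_i$, $w = e_p$ this gives the scalar identity $c(e_i,[e_j,e_p]) = -\,c([e_j,e_i],e_p)$. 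Substituting this into the first expansion and then commuting the form factors via $x^i \wedge y^j = (-1)^{kl} y^j \wedge x^i$ turns it into $-(-1)^{kl} y^j \wedge x^i \wedge z^p\, c([e_j,e_i],e_p) = -(-1)^{kl} c([y,x]\wedge z)$, which is exactly the claim.

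The one point requiring care — and the only place where anything can go wrong — is the sign bookkeeping: one must confirm that the Koszul sign from transposing $x^i$ (degree $k$) past $y^j$ (degree $l$) is $(-1)^{kl}$ and that no further sign is hidden in the convention used to extend $c$ and $[\cdot,\cdot]$ to $\ad P$-valued forms (the bracket already carries the wedge of the form parts in the stated order, so the sign enters only through this single transposition). Once the conventions of the preceding paragraph are fixed, this is immediate, so there is no genuine obstacle beyond this bookkeeping; the statement is the graded, $\ad P$-valued incarnation of the invariance of $c$ on $\fg$.
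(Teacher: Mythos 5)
Your proof is correct and is essentially the paper's own argument: the paper also proves this by a direct local computation, decomposing elements in a basis $\{e_i\}$ of $\fg$ and invoking the $\ad$-invariance of $c$. You have simply made explicit the sign bookkeeping that the paper leaves implicit, and your bookkeeping checks out.
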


\begin{proof}
This is a direct and local computation. Decomposing elements $x\in\Om^{0,k}(\ad P)$ in a basis $\{e_i\}$ of $\fg$
in the form $x= x_i\otimes e_i$, the result follows from the $\ad$-invariance of the pairing $c$.
\end{proof}

We are ready to prove the main result of this section, which describes the deformation theory for string algebroids in terms of a DGLA.


\begin{proposition}\label{theo:MC}
The expression (\ref{eq:bracket Q}) endows $(\fL^\bullet_Q,d_Q,[\cdot,\cdot])$ with the structure of a DGLA. Furthermore, 
given $(\alpha ,b)\in \fL^1_Q$, the Dolbeault operator $\theta^{0,1} + \alpha $ defines a new holomorphic $G$-bundle $P_{\theta+\alpha }$ endowed with a Courant extension determined by $(\theta + \alpha ,H - b)$ via Proposition \ref{prop:strholCour} if and only if $(\alpha ,b)$ satisfies the Maurer-Cartan equation
\begin{equation}\label{eq:MC}
d_Q(\alpha ,b)+\frac{1}{2}[(\alpha ,b),(\alpha ,b)]=0.
\end{equation}
\end{proposition}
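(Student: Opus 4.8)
The plan is to establish the two assertions separately: first that $(\fL^\bullet_Q,d_Q,[\cdot,\cdot])$ satisfies the four axioms of Definition \ref{def:DGLA}, and then that the Maurer-Cartan equation \eqref{eq:MC} is equivalent to the two conditions required by Proposition \ref{prop:strholCour} for $(\theta+\alpha,H-b)$ to define a string algebroid, namely $F_{\theta+\alpha}^{0,2}=0$ together with the integrability \eqref{eq:int2}. Throughout I use the standing normalization $F^{2,0}=F^{0,2}=0$, so $F=F^{1,1}$.

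For the DGLA structure, property (iii) is Lemma \ref{lem:differential Q}. Graded skew-commutativity (i) holds on the first component because it is inherited from $(\Om^{0,\bullet}(\ad P),[\cdot,\cdot]_P)$, and on the second component it follows from the symmetry of $c$ and the rule $c(\mu\wedge\nu)=(-1)^{pq}c(\nu\wedge\mu)$ for $\ad P$-valued forms of total degrees $p,q$; a short sign count, using that $\partial^\theta\alpha$ has total degree $k+1$, produces the cancellation. For the Jacobi identity (ii) and the Leibniz rule (iv) the first components again reduce to the known properties of $(\Om^{0,\bullet}(\ad P),\dbar^\theta,[\cdot,\cdot]_P)$, so the content lies in the second components. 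First I would record the Leibniz rules $\partial^\theta[\alpha,\alpha']=[\partial^\theta\alpha,\alpha']+(-1)^k[\alpha,\partial^\theta\alpha']$ and $d\,c(\mu\wedge\nu)=c(d^\theta\mu\wedge\nu)+(-1)^{|\mu|}c(\mu\wedge d^\theta\nu)$, together with the curvature identity $\partial^\theta\dbar^\theta+\dbar^\theta\partial^\theta=[F,\cdot\,]$ (valid since $F$ is of type $(1,1)$). For (ii), the six terms produced by Leibniz cancel in pairs by the $\ad$-invariance of $c$ (Lemma \ref{lem:ad invariance c}); this is precisely the cocycle condition that makes \eqref{eq:bracket Q} a genuine Lie bracket. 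For (iv), expanding $d_Q[x,y]$ and substituting the curvature identity produces, beyond the terms that already match $[d_Qx,y]+(-1)^k[x,d_Qy]$, exactly the combination $2(-1)^k c([F,\alpha]\wedge\partial^\theta\alpha')-2c(\partial^\theta\alpha\wedge[F,\alpha'])-2c(\partial^\theta[\alpha,\alpha']\wedge F)$, which I would show vanishes by applying Lemma \ref{lem:ad invariance c} and the graded symmetries of $c$ and of the bracket to move $F$ across the pairing. This sign-heavy cancellation is the main obstacle in Part 1.

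For the Maurer-Cartan characterization I split \eqref{eq:MC} into its two components. The first reads $\dbar^\theta\alpha+\tfrac12[\alpha,\alpha]=0$, which is exactly the vanishing of $F^{0,2}_{\theta+\alpha}$, i.e.\ the integrability of the Dolbeault operator $\theta^{0,1}+\alpha$ defining $P_{\theta+\alpha}$. The crucial simplification is that $\alpha$ is of type $(0,1)$: assuming the first component, the putative $(0,2)$-part $\dbar^\theta\alpha+\tfrac12[\alpha,\alpha]$ of $F_{\theta+\alpha}$ vanishes, so $F_{\theta+\alpha}=F+\partial^\theta\alpha$ and hence
\begin{equation*}
c(F_{\theta+\alpha}\wedge F_{\theta+\alpha})-c(F\wedge F)=2c(\partial^\theta\alpha\wedge F)+c(\partial^\theta\alpha\wedge\partial^\theta\alpha).
\end{equation*}
Since $(H,\theta)$ satisfies $dH+c(F\wedge F)=0$, the integrability condition \eqref{eq:int2} for $(\theta+\alpha,H-b)$, namely $d(H-b)+c(F_{\theta+\alpha}\wedge F_{\theta+\alpha})=0$, reduces to $db=2c(\partial^\theta\alpha\wedge F)+c(\partial^\theta\alpha\wedge\partial^\theta\alpha)$, which is precisely the second component of \eqref{eq:MC} for $k=1$ (the sign in \eqref{eq:bracket Q} yielding $+c(\partial^\theta\alpha\wedge\partial^\theta\alpha)$ after the factor $\tfrac12$). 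Reading the equivalence in both directions — the Maurer-Cartan equation gives $F^{0,2}_{\theta+\alpha}=0$ and then \eqref{eq:int2}, and conversely — yields the claim; it remains only to note the immediate facts $H-b\in\Om^{\leqslant 1}$ and $\theta+\alpha\in\cA_{P_{\theta+\alpha}}$, which guarantee that Proposition \ref{prop:strholCour} applies. Compared with Part 1, this part is comparatively clean once the type observation $F_{\theta+\alpha}=F+\partial^\theta\alpha$ is made.
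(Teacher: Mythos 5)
Your proof is correct and follows essentially the same route as the paper's: you reduce the DGLA axioms to their second components and settle them with the Leibniz rule, the curvature identity $\partial^\theta\dbar^\theta+\dbar^\theta\partial^\theta=[F_\theta,\cdot\,]$ and the $\ad$-invariance of $c$ (Lemma \ref{lem:ad invariance c}), and you split the Maurer-Cartan equation into its two components, the first giving integrability of $\theta^{0,1}+\alpha$ and the second, via $F_{\theta+\alpha}=F_\theta+\partial^\theta\alpha$, matching the condition of Proposition \ref{prop:strholCour}. Your only deviations are presentational --- making explicit the use of $dH+c(F_\theta\wedge F_\theta)=0$ and the sign count for graded skew-commutativity, which the paper leaves implicit.
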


\begin{proof}
Recall that $(\Om^{0,\bullet}(\ad P), \dbar^\theta,[\cdot,\cdot]_{P})$ is a DGLA, so it already satisfies the axioms of Definition \ref{def:DGLA}. From the expressions of $d_Q$ and $[\cdot,\cdot ]_Q$, we only need to consider the projection via $\pi_2$ in our computations. Consider $(x,y,z)\in \fL_Q^k\times\fL_Q^l\times\fL_Q^m$. We use the notation $\alpha _x = \pi_1 x$, $\alpha _y = \pi_1 y$, $\alpha _z = \pi_1 z$.
 We first show that $(\fL^\bullet_Q,[\cdot,\cdot])$ is a graded Lie algebra. The graded skew-commutativity is straightforward. The $\pi_2$-component of
 $$\mathrm{Jac}(x,y,z):=(-1)^{km}[x,[y,z]] + (-1)^{lk}[y,[z,x]]+(-1)^{ml}[z,[x,y]]$$
 is given by
 \begin{align*}
\pi_2 \mathrm{Jac}(x,y,z)  = {}& (-1)^{km+k} \: 2c(\del^\theta \alpha _x\wedge \del^\theta [\alpha _y,\alpha _z])  \\
                          & + (-1)^{lk+l}\:  2c(\del^\theta \alpha _y\wedge\del^\theta [\alpha _z,\alpha _x])  \\
                          &  + (-1)^{ml+m} \: 2c(\del^\theta \alpha _z\wedge \del^\theta [\alpha _x,\alpha _y]) \\
                        = {} & (-1)^{km+k} \: 2c(\del^\theta \alpha _x\wedge [\del^\theta \alpha _y,\alpha _z]) \\
                          & +(-1)^{lk+l+m} \: 2c(\del^\theta \alpha _y\wedge [\alpha _z\wedge\del^\theta \alpha _x]) + c.p.
 \end{align*}
where $c.p.$ stands for cyclic permutations. This vanishes by Lemma \ref{lem:ad invariance c}, so  the Jacobi identity holds. By Lemma \ref{lem:differential Q},  $d_Q$ is a differential. Finally, it only remains to verify the derivation property.
The $\pi_2$-projection of $d[x,y]$ in degree $k+l+1$ is
 \begin{align*}
 \frac{1}{2}\pi_2(d[x,y])  = {} & (-1)^k  d(c (\del^\theta \alpha _x\wedge \del^\theta \alpha _y) ) -  c(\del^\theta [\alpha _x,\alpha _y]\wedge F) \\
	        = {} & (-1)^k  c(\delb^\theta\del^\theta \alpha _x \wedge \del^\theta \alpha _y)-  (-1)^{k} c([\alpha _x,\del^\theta \alpha _y]\wedge F) \\
                  & -  c(\del^\theta \alpha _x \wedge \delb^\theta\del^\theta \alpha _y)-  c([\del^\theta \alpha _x, \alpha _y]\wedge F) \\
                = {} &    (-1)^{k+1}  c(\del^\theta\delb^\theta \alpha _x \wedge \del^\theta \alpha _y)+  
                           (-1)^k  c([F, \alpha _x] \wedge \del^\theta \alpha _y) \\  & - (-1)^{k} c([\alpha _x,\del^\theta \alpha _y]\wedge F)  + c(\del^\theta \alpha _x \wedge \del^\theta\delb^\theta \alpha _y)\\
                  &  -c(\del^\theta \alpha _x \wedge [F, \alpha _y])-  c([\del^\theta \alpha _x, \alpha _y]\wedge F) \\
                = {} &    (-1)^{k+1}  c(\del^\theta\delb^\theta \alpha _x \wedge \del^\theta \alpha _y)+ c(\del^\theta \alpha _x \wedge \del^\theta\delb^\theta \alpha _y)\\
                =  {} & \frac{1}{2}\pi_2([dx,y] + (-1)^k [x, dy] ),
\end{align*}
where we used Lemma \ref{lem:ad invariance c} in the fourth equality. This proves the first part of the statement.
%

As for the second part of the statement, the vanishing of the $\pi_1$-projection of \eqref{eq:MC} gives
$$
\delb^\theta \alpha  + \frac{1}{2} [\alpha ,\alpha ]=0,
$$
and therefore the operator $\theta^{0,1} + \alpha$ defines a new holomorphic $G$-bundle $P_{\theta+\alpha }$ on the underlying smooth $G$-bundle $\underline{P}$. The vanishing of the $\pi_2$-projection  is equivalent to
 \begin{eqnarray*}
           d(H - b) + c(F+\del^\theta \alpha\wedge F + \del^\theta \alpha) = 0.
\end{eqnarray*}
As $\alpha$ satisfies the Maurer-Cartan equation, the curvature of $\theta + \alpha$ is given by
 \begin{align*}
           F_{\theta+\alpha}  = {}& F_\theta + d^\theta \alpha + \frac{1}{2}[\alpha,\alpha] \\
                         = {}& F +\del^\theta \alpha.
\end{align*}
Thus $(H - b,\theta + \alpha)$ defines a Courant extension of the holomorphic principal bundle $P_{\theta + \alpha}$, via Proposition \ref{prop:strholCour}. 
\end{proof}

\begin{remark}
Observe  that in the sequence (\ref{seq:decomp diff graded Q to P}) the brackets commute with the inclusion and projection, and we obtain an exact sequence of DGLAs.
\end{remark}

\subsection{Kuranishi slice}

Let $(Q,P,\rho)$ be a string algebroid over a compact complex manifold $X$. In this section we construct a locally complete family of deformations of $(Q,P,\rho)$ following the method of Kuranishi \cite{ku}. The definition of the \emph{gauge group} of symmetries is a subtle question, which involves `a-field' transformations (see \cite{grt,Rubioth}).

We fix a smooth principal $G$-bundle $\underline{P}$ over $X$ with vanishing first Pontryagin class. Consider the space of parameters 
$$
\cP = \Omega^{\leqslant 1} \times \cA_{\underline{P}},
$$ 
where $\cA_{\underline{P}}$ denotes the space of connections on $\underline{P}$. We have the subspace of integrable pairs
$$
\cP^I = \{(H,\theta) \; | \; F_\theta^{0,2} = 0, \; dH + c(F_\theta \wedge F_\theta) = 0\} \subset \cP.
$$
By Proposition \ref{prop:strholCour}, a point $(H,\theta) \in \cP^I$ determines a Courant extension $(Q,P_\theta,\rho)$ of the holomorphic principal $G$-bundle $P_\theta = (\underline{P},\theta^{0,1})$. Note that we can identify the tangent space of $\cP$ at $(H,\theta)$ with
$$
T_{(H,\theta)} \cP = \Omega^{\leqslant 1} \oplus \Omega^1(\ad \underline{P}),
$$ 
and the subspace $T_{(H,\theta)}\cP^I \subset T_{(H,\theta)}\cP$ corresponds to pairs $(\dot H,\dot \theta)$ solving  
\begin{equation}\label{eq:integrableinf}
\dbar^\theta \dot \theta^{0,1} = 0, \qquad d(\dot H + 2c(\dot \theta \wedge F_\theta)) = 0.
\end{equation}

Define the \emph{gauge group} of symmetries by
$$
\cG := \cG_{\underline{P}} \times \Omega^{1,0}(\ad \underline{P}) \times \Omega^{2,0},
$$
where $\cG_{\underline{P}}$ denotes the gauge group of $\underline{P}$, with group operation
$$ 
(g,a,B) (g',a',B') = (gg', B + B' + c((g'^{-1} \cdot a)\wedge a'), g'^{-1} \cdot a + a').
$$
The associativity for the product follows from \cite[Prop. 4.3]{grt}. Note that the vector space underlying the Lie algebra of $\cG$ is given by
$$
\Lie \; \cG = \Omega^0(\ad \underline{P}) \oplus \Omega^{1,0}(\ad \underline{P}) \oplus \Omega^{2,0}.
$$
The proof of the following lemma is straightforward.

\begin{lemma}\label{lem:gaugeaction}
The group $\cG$ acts on $\cP$ preserving $\cP^I \subset \cP$, by the formula
$$
(g,a,B) \cdot (H,\theta) = (H + dB + CS(\theta) - CS(\theta + a) - d c (\theta \wedge a),g (\theta + a)).
$$
The infinitesimal action $L \colon \Lie \; \cG \to T_{(H,\theta)}\cP$ 
at $(H,\theta)$ is given by
$$
L(\alpha,a,b) = (db - 2c(a \wedge F_\theta),-d^\theta \alpha + a).
$$

\end{lemma}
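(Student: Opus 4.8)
The plan is to verify the three assertions in turn: that the formula defines a group action taking values in $\cP$, that this action preserves the integrable locus $\cP^I$, and that its linearization at $(H,\theta)$ is the stated map $L$. To begin, I would rewrite the $H$-component using the transgression identity \eqref{eq:CSdiff}--\eqref{eq:CSdiffexp}. Since $c(\theta\wedge\theta)=0$ for the symmetric pairing $c$, one has $dc((\theta+a)\wedge\theta)=-dc(\theta\wedge a)$, so the basic form $CS(\theta)-CS(\theta+a)-dc(\theta\wedge a)$ equals $-2c(a\wedge F_\theta)-c(a\wedge d^\theta a)-\tfrac13 c(a,[a,a])$ on $X$; on $\cP^I$ (where $F_\theta^{0,2}=0$) every summand is of type $(3,0)$ or $(2,1)$, so the $H$-component lies in $\Omega^{\leqslant 1}$, while $g(\theta+a)$ is manifestly a connection. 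The identity $(\mathrm{id},0,0)$ acts trivially.

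For compatibility with the group law I would compute both $(g,a,B)\cdot\big((g',a',B')\cdot(H,\theta)\big)$ and $\big((g,a,B)(g',a',B')\big)\cdot(H,\theta)$. The connection components agree because $g\cdot(\phi+\beta)=g\cdot\phi+\Ad_g\beta$, together with the cocycle property of the gauge action, forces both to equal $(gg')\cdot\theta+\Ad_g a+\Ad_{gg'}a'$, matching the $a$-component $g'^{-1}\cdot a+a'$ of the product. The $H$-components agree by the functoriality \eqref{eq:CSfunctorial} of the Chern--Simons form and the fact that \eqref{eq:CSdiff} is basic, the correction term $c((g'^{-1}\cdot a)\wedge a')$ in the group law being exactly what is needed to close the cocycle. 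This last step is the main bookkeeping obstacle, but it is formally identical to the computation already carried out in Lemma \ref{lem:AutQ0hol}.

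Second, I would check that $\cP^I$ is preserved. Since $a$ is of type $(1,0)$ we have $F_{\theta+a}^{0,2}=F_\theta^{0,2}$, and a gauge transformation conjugates the curvature, so $F_{g(\theta+a)}^{0,2}$ vanishes whenever $F_\theta^{0,2}$ does. For the anomaly condition, write $H'$ for the new $H$-component and $\theta'=g(\theta+a)$; applying $d$ and using $dCS(\theta)=c(F_\theta\wedge F_\theta)$ from \eqref{eq:diff-of-CS} gives
\begin{equation*}
dH'=dH+c(F_\theta\wedge F_\theta)-c(F_{\theta+a}\wedge F_{\theta+a}).
\end{equation*}
Substituting $dH=-c(F_\theta\wedge F_\theta)$ and using $F_{\theta'}=\Ad_g F_{\theta+a}$ together with the $\Ad$-invariance of $c$ yields $dH'+c(F_{\theta'}\wedge F_{\theta'})=0$, as required.

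Finally, for the infinitesimal action I would differentiate $s\mapsto(g_s,a_s,B_s)\cdot(H,\theta)$ at $s=0$ along a path with $(g_0,a_0,B_0)=(\mathrm{id},0,0)$ and derivatives $(\alpha,a,b)$. The connection component differentiates to $-d^\theta\alpha+a$, where $-d^\theta\alpha$ is the standard infinitesimal gauge action on $\theta$. For the $H$-component, the first variation of Chern--Simons, read off from the transgression expansion above, is $\tfrac{d}{ds}\big|_0 CS(\theta+a_s)=2c(a\wedge F_\theta)-dc(\theta\wedge a)$, while $\tfrac{d}{ds}\big|_0 dc(\theta\wedge a_s)=dc(\theta\wedge a)$ and $\tfrac{d}{ds}\big|_0 dB_s=db$; these combine to give $db-2c(a\wedge F_\theta)$, the two $dc(\theta\wedge a)$ terms cancelling. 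This reproduces $L(\alpha,a,b)=(db-2c(a\wedge F_\theta),-d^\theta\alpha+a)$, completing the proof.
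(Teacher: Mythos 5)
Your proof is correct and is essentially the verification the paper has in mind: the authors omit the proof as ``straightforward,'' and your argument --- rewriting the $H$-component via the transgression identity \eqref{eq:CSdiff}--\eqref{eq:CSdiffexp}, closing the group-law cocycle by functoriality \eqref{eq:CSfunctorial} and basicness exactly as in Lemmas \ref{lemma:sigmaP} and \ref{lem:AutQ0hol}, using \eqref{eq:diff-of-CS} together with $F_{g(\theta+a)}=\mathrm{Ad}_g F_{\theta+a}$ to preserve the integrability condition, and linearizing (where the two $dc(\theta\wedge a)$ terms indeed cancel to give $db-2c(a\wedge F_\theta)$) --- is precisely the toolkit the surrounding lemmas deploy, with the cocycle identity $R(\theta,\beta_1+\beta_2)=R(\theta,\beta_2)+R(\theta+\beta_2,\beta_1)+dc(\beta_1\wedge\beta_2)$ for the basic form \eqref{eq:CSdiffexp} confirming that the correction term $c((g'^{-1}\cdot a)\wedge a')$ in the group law is exactly right. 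The one point your type-check tacitly exposes is that the $H$-component lies in $\Omega^{\leqslant 1}$ only when $F_\theta^{0,2}=0$, since otherwise $-2c(a\wedge F_\theta^{0,2})$ contributes a $(1,2)$-part, so the formula literally preserves $\cP=\Omega^{\leqslant 1}\times\cA_{\underline{P}}$ only over such connections; this is an imprecision in the lemma's statement rather than a gap in your argument, and your restriction of the type verification to $\cP^I$ covers the situation the paper actually uses.
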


Our next result shows that the equivalence relation induced by gauge transformations is equivalent to the one induced by isomorphisms of string algebroids in Definition \ref{def:stringholCourmor}. Given a string algebroid $(Q,P,\rho)$, we consider its group of automorphisms $\Aut(Q,P,\rho)$  (see Lemma \ref{lemma:Autstring}).

\begin{lemma}\label{lem:gaugerel}
Let $(H,\theta), (H',\theta') \in \cP^I$ and let $(Q,P,\rho)$ and $(Q',P',\rho')$ be the associated string algebroids, respectively. Then, $(H',\theta')$ is in the $\cG$-orbit of $(H,\theta)$ if and only if $(Q',P',\rho')$ is isomorphic to $(Q,P,\rho)$. Furthermore, there is a natural identification between the isotropy group of $(H,\theta)$ in $\cG$ and $\Aut(Q,P,\rho)$.
\end{lemma}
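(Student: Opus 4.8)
The plan is to show that the $\cG$-action of Lemma \ref{lem:gaugeaction} realises exactly the isomorphism relation of Lemma \ref{lemma:strisomorphism}, and that the isotropy of $(H,\theta)$ recovers the automorphism group as described in Lemma \ref{lemma:Autstring}. Everything reduces to matching two formulas.

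First I would set up the dictionary between the two descriptions. Given $(g,a,B)\in\cG$ with $(H',\theta')=(g,a,B)\cdot(H,\theta)$, the second component reads $\theta'=g(\theta+a)$, that is $a=g^{-1}\theta'-\theta$; since $a\in\Omega^{1,0}(\ad\underline{P})$, comparing $(0,1)$-parts gives $g\theta^{0,1}=\theta'^{0,1}$, so $g$ is a holomorphic isomorphism $P=P_\theta\to P'=P_{\theta'}$ and $a$ coincides with the one-form appearing in Lemma \ref{lemma:strisomorphism}. For the first component I would rewrite the Chern--Simons difference $CS(\theta)-CS(\theta+a)-dc(\theta\wedge a)$ by means of \eqref{eq:CSdiffexp}, obtaining
$$
H' = H - 2c(a\wedge F_\theta) - c(a\wedge d^\theta a) - \frac{1}{3}c(a,[a,a]) + dB'
$$
for a $(2,0)$-form $B'$ that differs from $B$ by exact terms of the shape $dc(\cdot\wedge\cdot)$. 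This is precisely \eqref{eq:anomalyiso}.

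With this dictionary both implications follow from Lemma \ref{lemma:strisomorphism}. If $(H',\theta')$ lies in the $\cG$-orbit of $(H,\theta)$, the displayed identity together with the holomorphic isomorphism $g$ exhibits $(Q',P',\rho')$ as isomorphic to $(Q,P,\rho)$. Conversely, an isomorphism of the two string algebroids provides, by Lemma \ref{lemma:strisomorphism}, a holomorphic isomorphism $g$ (equivalently $g\in\cG_{\underline{P}}$ with $g\theta^{0,1}=\theta'^{0,1}$) and a $(2,0)$-form for which \eqref{eq:anomalyiso} holds; setting $a=g^{-1}\theta'-\theta\in\Omega^{1,0}(\ad\underline{P})$ and solving for the correction $B'$ recovers an element $(g,a,B')\in\cG$ carrying $(H,\theta)$ to $(H',\theta')$. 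The one point that needs genuine checking is that the exact discrepancies between the action formula and \eqref{eq:anomalyiso} are really $(2,0)$-forms, i.e.\ lie in $\Omega^{\leqslant 0}$, so that they may be absorbed into the free parameter $B$; this type bookkeeping for the Chern--Simons terms is the main, though routine, obstacle.

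For the isotropy statement I would specialise to $(H',\theta')=(H,\theta)$. As $a$ is of type $(1,0)$, the equation $\theta=g(\theta+a)$ forces $g\theta^{0,1}=\theta^{0,1}$, so $g\in\cG_P$ and $a=g^{-1}\theta-\theta=a^g$, while the condition on $B$ becomes $dB=2c(a^g,F)+c(a^g,d^\theta a^g)+\frac{1}{3}c(a^g,[a^g,a^g])$, which is exactly the defining relation of $\Aut(Q,P,\rho)$ in Lemma \ref{lemma:Autstring}. Hence the isotropy group is $\{(g,a^g,B)\}$, and the assignment $(g,a^g,B)\mapsto(g,B)$ is the desired bijection. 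To see it is a group isomorphism, I would finally observe that the third component of the product $(g,a^g,B)(g',a^{g'},B')$ in $\cG$ equals $g'^{-1}\cdot a^g+a^{g'}=a^{gg'}$ by \eqref{eq:afg-af-ag}, so the isotropy is a subgroup, and the remaining $B$-component reproduces the group law \eqref{eq:groupproduct} of $\Aut(Q,P,\rho)$.
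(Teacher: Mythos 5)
Your proposal is correct and follows essentially the same route as the paper's proof: both directions reduce to Lemma \ref{lemma:strisomorphism} via the Chern--Simons identity \eqref{eq:CSdiffexp} together with functoriality \eqref{eq:CSfunctorial}, and the isotropy is identified with $\Aut(Q,P,\rho)$ exactly as in Lemma \ref{lemma:Autstring}, with your explicit check of the group law via \eqref{eq:afg-af-ag} making explicit what the paper leaves implicit. The one point you flag as needing verification in fact resolves trivially: since $c(\theta\wedge\theta)=0$, the term $dc((\theta+a)\wedge\theta)$ from \eqref{eq:CSdiffexp} cancels $dc(\theta\wedge a)$ in the action formula exactly, so $B'=B$ and no absorption into the free parameter is needed.
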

\begin{proof}
If $(Q',P',\rho')$ is isomorphic to $(Q,P,\rho)$, then there exists $g' \in \cG_{\underline{P}}$ such that $g'\theta^{0,1} = \theta'^{0,1}$. Then $g'^{-1} \theta' \in \cA_{P}$, and by Lemma \ref{lemma:strisomorphism} there exists $B \in \Omega^{2,0}$ and $g \in \cG_P$ satisfying
\begin{align*}
H'  & = H + CS(g\theta) - CS(g'^{-1}\theta') - dc(g\theta \wedge g'^{-1}\theta') + dB\\
& = H + CS(\theta) - CS((g'g)^{-1}\theta') - dc(\theta \wedge (g'g)^{-1}\theta') + dB.
\end{align*}
Setting $a = (g'g)^{-1}\theta' - \theta$ we obtain 
$$
(g'g,a,B) \cdot (H,\theta) = (H',\theta').
$$ 
Conversely, if $(H',\theta')$ is in the $\cG$-orbit of $(H,\theta)$ we can assume that $P = P'$, that is $\theta'^{0,1} = \theta^{0,1}$. Then, $(g,a,B) \cdot (H',\theta') = (H,\theta)$ implies that $g$ is an automorphism of $P$ and therefore $g\theta' \in \cA_{P}$. From Lemma \ref{lem:gaugeaction} and the functoriality of the Chern-Simons $3$-form (see \eqref{eq:CSfunctorial}) it follows that 
$$
H = H' + dB + CS(g \theta') - CS(\theta) - d c (g \theta' \wedge \theta),
$$
and therefore $(Q',P',\rho')$ is isomorphic to $(Q,P,\rho)$ by Proposition \ref{prop:classification}. Finally, if  $(g,a,B) \cdot (H,\theta) = (H,\theta)$ then $a = g^{-1} \theta - \theta$ and
$$
dB = CS(g^{-1} \theta) - CS(\theta) - d c (g^{-1} \theta \wedge \theta).
$$
Thus, the map $(g,a,B) \to (g,B)$ induces an isomorphism between the isotropy group of $(H,\theta)$ in $\cG$ and $\Aut(Q,P,\rho)$.
\end{proof}

\begin{remark}
A priori, one may have thought that the presence of $a$-field transformations in the definition of $\cG$ implies that the equivalence relation by gauge transformations is stronger than the one induced by Definition \ref{def:stringholCourmor}. Key to the proofs of Lemma \ref{lem:gaugeaction} and Lemma \ref{lem:gaugerel} is the fact that our a-field transformations are of type $(1,0)$. These symmetries balance the redundant degrees of freedom in the parameters introduced by a choice of connection in the proof of Theorem \ref{th:classification}.
\end{remark}

We next provide the relation with the DGLA in Theorem \ref{theo:MC}. 
Note that a simple calculation shows that the complex $(\fL_Q^\bullet,d_Q)$ in Lemma \ref{lem:resolution sheaf Q} is elliptic.
Note also that $\fL_Q^1$ carries a linear complex-structure given by multiplication of each factor by $i$, which descends to $H^1(\fL_Q^\bullet,d_Q)$.
Lastly, $\cP$ carries a formal almost complex structure $J_\cP$ given by
$$
J_\cP(\dot H, \dot \theta) = (i \dot H, -\dot \theta( J \cdot)),
$$
for $(\dot H,\dot \theta)\in T_{H,\theta}\cP$, and where $J$ denotes the almost complex structure of $X$.
\begin{lemma}
\label{lem:elliptic}
Let $(H,\theta) \in \cP^I$ and $(Q,P,\rho)$ be the associated string algebroid. The holomorphic map
\begin{equation}\label{eq:hatepsilonmap}
\hat \epsilon \colon \fL_Q^1 \to \cP,
\end{equation}
defined by $\hat \epsilon(\alpha,b) = (H- b,\theta + \alpha)$, intersects every $\cG$-orbit in $\cP$. Furthermore, its differential at the origin induces an isomorphism
\begin{equation}\label{eq:dhatepsilonmap}
d \hat \epsilon_0 \colon H^1(\fL_Q^\bullet,d_Q) \to T_{(H,\theta)}\cP^I/\operatorname{Im} L.
\end{equation}
\end{lemma}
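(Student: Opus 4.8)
The plan is to regard $\hat\epsilon$ as a local slice for the $\cG$-action and to prove the two assertions separately: that the slice meets every orbit, and that its linearization at the origin realizes the isomorphism with $H^1(\fL_Q^\bullet,d_Q)$.

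For transversality I would first note that, as $b$ ranges over $\Omega^{\leqslant 1}$ and $\alpha$ over $\Omega^{0,1}(\ad P)$, the image of $\hat\epsilon$ is exactly $\Omega^{\leqslant 1}\times\{\theta+\alpha:\alpha\in\Omega^{0,1}(\ad \underline{P})\}\subset\cP$. Given an arbitrary $(H',\theta')\in\cP$, write $c=\theta'-\theta\in\Omega^1(\ad \underline{P})$ and apply the gauge element $(\Id,-c^{1,0},0)\in\cG$: by Lemma \ref{lem:gaugeaction} this sends the connection $\theta'$ to $\theta+c^{0,1}$, while the resulting $H$-component stays in $\Omega^{\leqslant 1}$ since $\cG$ preserves $\cP$. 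The transformed point therefore lies in the image of $\hat\epsilon$. The structural reason this works is that the a-field is of type $(1,0)$, so it absorbs precisely the $(1,0)$-part of any connection deformation.

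The second assertion I would establish by explicit computation with $d\hat\epsilon_0(\alpha,b)=(-b,\alpha)$, the formula for $L$ in Lemma \ref{lem:gaugeaction}, and the description \eqref{eq:integrableinf} of $T_{(H,\theta)}\cP^I$, carried out in four steps. (i) For $(\alpha,b)\in\ker(d_Q\colon\fL_Q^1\to\fL_Q^2)$ the equation $\dbar^\theta\alpha=0$ is the first condition of \eqref{eq:integrableinf}, while $db=2c(\partial^\theta\alpha\wedge F)$ together with the Bianchi identity $d^\theta F=0$ yields $d(-b+2c(\alpha\wedge F))=0$, the second condition; hence $d\hat\epsilon_0$ maps $\ker d_Q$ into $T_{(H,\theta)}\cP^I$. (ii) For $(\beta,\gamma)\in\fL_Q^0$ a direct substitution gives $d\hat\epsilon_0(d_Q(\beta,\gamma))=L(-\beta,-\partial^\theta\beta,-\gamma)$, so the map descends to $H^1(\fL_Q^\bullet,d_Q)\to T_{(H,\theta)}\cP^I/\operatorname{Im}L$. (iii) For injectivity, if $(-b,\alpha)=L(\alpha',a,b')$, then comparing the $(1,0)$- and $(0,1)$-parts of the connection component forces $a=\partial^\theta\alpha'$ and $\alpha=-\dbar^\theta\alpha'$, and the $H$-component gives $b=-db'+2c(\partial^\theta\alpha'\wedge F)$, whence $(\alpha,b)=d_Q(-\alpha',-b')$ is a coboundary. (iv) For surjectivity, given $(\dot H,\dot\theta)\in T_{(H,\theta)}\cP^I$, subtracting $L(0,\dot\theta^{1,0},0)$ reduces the connection component to $\dot\theta^{0,1}$; then $\alpha=\dot\theta^{0,1}$ and $b=-\dot H-2c(\dot\theta^{1,0}\wedge F)$ lie in $\ker d_Q$ (using $F^{0,2}=0$ and both equations of \eqref{eq:integrableinf}) and satisfy $d\hat\epsilon_0(\alpha,b)\equiv(\dot H,\dot\theta)$ modulo $\operatorname{Im}L$.

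Each individual identity is routine, so the real work is bidegree bookkeeping: one repeatedly invokes $F_\theta^{0,2}=0$ (from $(H,\theta)\in\cP^I$) to keep expressions such as $c(\dot\theta^{1,0}\wedge F)$ inside $\Omega^{\leqslant 1}$, and the Bianchi identity together with the $\ad$-invariance of $c$ (Lemma \ref{lem:ad invariance c}) to convert the closedness conditions into the linearized equations. The point deserving most care, and the conceptual crux, is that the $(1,0)$ a-field directions and the closed $(2,0)$-directions of $\cG$ exactly account for the redundancy of $d\hat\epsilon_0$, that is $\operatorname{Im}L\cap\operatorname{Im}d\hat\epsilon_0=d\hat\epsilon_0(\operatorname{Im}d_Q)$, which is precisely what steps (iii) and (iv) verify and what makes the two quotients coincide.
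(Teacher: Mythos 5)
Your proposal is correct and takes essentially the same approach as the paper: the orbit-intersection claim is proved with the very same gauge element (trivial gauge transformation, $a$-field equal to $(\theta-\theta')^{1,0}$, zero $B$-field), and your steps (i)--(iv) reproduce the paper's computation, which merely packages them into an explicit inverse $\varphi(\dot H,\dot\theta)=\bigl(\dot\theta^{0,1},\,-\dot H-2c(\dot\theta^{1,0}\wedge F_\theta)\bigr)$ with $\varphi\circ d\hat\epsilon_0=\Id$ --- the same formula that appears in your surjectivity step. The ingredients you flag (type $(1,0)$ $a$-fields, $F_\theta^{0,2}=0$, Bianchi identity and $\ad$-invariance of $c$) are exactly those used in the paper's proof.
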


\begin{proof}
Given $(H',\theta') \in \cP$, we define $a = (\theta - \theta')^{1,0}$, $\alpha = (\theta' - \theta)^{0,1}$ and 
$$
b = H - H' - (CS(\theta) - CS(\theta + a) - d c (\theta \wedge a)),
$$
Then, the first part of the statement follows from
\begin{align*}
(0,a,0)(H',\theta') & = (H' + (H-H'-b),\theta^{1,0} + \theta'^{0,1}) = \hat \epsilon (\alpha,b).
\end{align*}
As for the second part, notice that the map 
$$
\varphi \colon T_{(H,\theta)}\cP \to \fL_Q^1
$$
defined by $\varphi(\dot H,\dot \theta) = (\dot\theta^{0,1},- \dot H - 2c(\dot \theta^{1,0} \wedge F_\theta))$ satisfies $\varphi \circ d \hat \epsilon_0 = \Id$, and therefore it is enough to prove that $\varphi$ induces a well-defined isomorphism
\begin{equation}\label{eq:varphimap}
\varphi \colon T_{(H,\theta)}\cP^I/\operatorname{Im} L \to H^1(\fL_Q^\bullet,d_Q).
\end{equation}
To prove this, given $(\dot H,\dot \theta) \in T_{(H,\theta)}\cP^I$ (see \eqref{eq:integrableinf}) we have
$$
- d(\dot H + 2c(\dot \theta^{1,0} \wedge F_\theta)) - 2c(\partial^\theta \dot \theta^{0,1} \wedge F_\theta) = - d(\dot H + 2c(\dot \theta \wedge F_\theta)) = 0,
$$
and therefore $d_Q(\varphi(\dot H,\dot \theta)) = 0$. Furthermore, if $v = L(\alpha,a,b)$ 
we have
$$
\varphi(v) = (- \dbar^\theta \alpha,  - db + 2c(\partial^\theta \alpha \wedge F_\theta)) \in \operatorname{Im} d_Q,
$$
and therefore \eqref{eq:varphimap} is well-defined. To see the injectivity, if $(\dot H,\dot \theta) \in T_{(H,\theta)}\cP^I$ satisfies
$$
\varphi(\dot H,\dot \theta) = (\dbar^\theta \alpha, db - 2c(\partial^\theta \alpha \wedge F_\theta))
$$
for $(\alpha,b) \in \fL_Q^0$, then $\dot \theta = d^\theta \alpha + a$ and 
$$
\dot H = - db + 2c(\partial^\theta \alpha \wedge F_\theta)) - 2c(\dot \theta^{1,0} \wedge F_\theta)) = - db - 2c(a \wedge F_\theta),
$$ 
for $a = \dot\theta^{1,0}- \partial^\theta \alpha$. Thus, $(\dot H,\dot \theta) \in \operatorname{Im} L$ and $\varphi$ is injective. For the surjectivity, if $(\alpha,b)$ is a $d_Q$-closed, then we define $(\dot H,\dot\theta) = (-b,\alpha)$, which clearly defines an element in $T_{(H,\theta)}\cP^I$ such that $\varphi(\dot H,\dot\theta) = (\alpha,b)$.
\end{proof}

We are now ready to prove the main result of this section.

\begin{theorem}
\label{thm:slice}
There exists a map $\epsilon$ from a neighbourhood $U$ of $0\in H^1(\fL_Q^\bullet,d_Q)$ to a neighbourhood of $(H,\theta)\in \cP$ such that
\begin{enumerate}
 \item The map $\epsilon$ is 
 holomorphic, and $\epsilon(0)=(H,\theta)$,
 \item each $\cG$-orbit of integrable pairs near $(H,\theta)$ intersects the image of $\epsilon$,
 \item there is an analytic obstruction map $\Phi \colon H^1(\fL_Q^\bullet,d_Q) \to H^2(\fL_Q^\bullet,d_Q)$ with $\Phi(0) = 0$ and $d\Phi(0) = 0$, 
 such that the deformations parametrized by the sub-family $\cM = \{x \in H^1(\fL_Q^\bullet,d_Q) \; | \; \Phi(x) = 0\}$ are precisely the integrable ones. 
\end{enumerate}
\end{theorem}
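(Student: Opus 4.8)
\textit{Proof plan.} The plan is to run the Kuranishi construction for the elliptic DGLA $(\fL_Q^\bullet,d_Q,[\cdot,\cdot])$ of Proposition \ref{theo:MC}, using that the identification of gauge orbits with isomorphism classes has already been established in Lemma \ref{lem:gaugerel} and Lemma \ref{lem:elliptic}. First I would fix a Hermitian metric on $X$ and the induced $L^2$-inner products on $\fL_Q^\bullet$, let $d_Q^*$ be the formal adjoint of $d_Q$, and form the Laplacian $\Delta_Q=d_Q d_Q^*+d_Q^* d_Q$, which is elliptic since the complex is. Hodge theory on the compact manifold $X$ then provides the harmonic projector $\cH$ onto $\cH^\bullet\cong H^\bullet(\fL_Q^\bullet,d_Q)$ and a self-adjoint Green operator $\bfG$ with $\cH+\Delta_Q\bfG=\operatorname{Id}$, commuting with $d_Q$ and $d_Q^*$. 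Passing to Sobolev completions $W^{s,2}$ with $s>\dim_\RR X$, I would set $F(x)=x+\tfrac12 d_Q^*\bfG[x,x]$ near $0\in\fL_Q^1$; since $dF_0=\operatorname{Id}$, $F$ is a local biholomorphism and I define $\epsilon_\fL(\tau):=F^{-1}(\tau)$ for $\tau$ in a neighbourhood $U$ of $0\in\cH^1\cong H^1(\fL_Q^\bullet,d_Q)$, so that $\epsilon_\fL(\tau)$ solves $x+\tfrac12 d_Q^*\bfG[x,x]=\tau$. Applying $\cH$ and $d_Q^*$ to this equation gives $\cH x=\tau$ and $d_Q^* x=0$.

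The analytic heart is the identification of the obstruction. Writing $y=d_Qx+\tfrac12[x,x]$ for $x=\epsilon_\fL(\tau)$, the graded Leibniz rule together with the identity $[[x,x],x]=0$ for degree-one $x$ yields $d_Qy=[y,x]$, while a direct computation using $\Delta_Q x=-\tfrac12 d_Q^*[x,x]$ and $d_Q^* x=0$ gives $d_Q^* y=0$; since also $\cH y=\tfrac12\cH[x,x]$, the defect decomposes as $y=\tfrac12\cH[x,x]+d_Q^*\bfG\, d_Q y$. When $\cH[x,x]=0$ this reads $y=d_Q^*\bfG[y,x]$, whence the elliptic bound for $d_Q^*\bfG$ and the Sobolev multiplication estimate give $\|y\|\le C\|[y,x]\|\le C'\|y\|\,\|x\|$, forcing $y=0$ once $\|x\|$ is small. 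Thus for $\tau\in U$ the element $x=\epsilon_\fL(\tau)$ satisfies the Maurer-Cartan equation if and only if $\Phi(\tau):=\cH[\epsilon_\fL(\tau),\epsilon_\fL(\tau)]=0$. I would then set $\cM=\{\tau\in U : \Phi(\tau)=0\}$; since $\epsilon_\fL(\tau)=\tau+O(\tau^2)$, both $\Phi(0)=0$ and $d\Phi(0)=0$ hold, which is (3), and by Proposition \ref{theo:MC} the Maurer-Cartan solutions are exactly the integrable deformations.

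To produce the map of the statement I would compose with $\hat\epsilon$ from Lemma \ref{lem:elliptic}, setting $\epsilon=\hat\epsilon\circ\epsilon_\fL\colon U\to\cP$, so that $\epsilon(0)=(H,\theta)$. Holomorphicity (part (1)) follows because $d_Q$, $d_Q^*$, $\bfG$ and $\cH$ are $\CC$-linear for the complex structure on $\fL_Q^\bullet$ and the bracket \eqref{eq:bracket Q} is $\CC$-bilinear, so the defining equation $F(x)=\tau$ is holomorphic and the holomorphic implicit function theorem applies; combined with the fact that $\hat\epsilon$ intertwines this complex structure with $J_\cP$, this shows $\epsilon$ is holomorphic. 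Elliptic regularity for the smooth-coefficient equation $F(x)=\tau$ guarantees that each $\epsilon_\fL(\tau)$ is a genuine smooth element of $\fL_Q^1$, so $\epsilon$ indeed lands in $\cP$.

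It remains to prove the completeness statement (2). By Lemma \ref{lem:elliptic} every $\cG$-orbit in $\cP$ meets the image of $\hat\epsilon$, and by Proposition \ref{theo:MC} an integrable pair corresponds there to some $x\in\fL_Q^1$ solving the Maurer-Cartan equation, with $x$ small near $(H,\theta)$. I would first move $x$ within its orbit to the Kuranishi gauge $d_Q^* x=0$: using the relation $\varphi(L(\alpha,a,b))=-d_Q(\alpha,b)$ from the proof of Lemma \ref{lem:elliptic}, the gauge directions are $\operatorname{Im} d_Q$, and solving $d_Q^*(g\cdot x)=0$ by the implicit function theorem, with linearization the isomorphism $d_Q^* d_Q$ on $(\ker d_Q)^\perp\subset\fL_Q^0$, yields a co-closed representative. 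For such a co-closed Maurer-Cartan element the Hodge decomposition gives $(\operatorname{Id}-\cH)x=d_Q^* d_Q\bfG x=-\tfrac12 d_Q^*\bfG[x,x]$, that is $x+\tfrac12 d_Q^*\bfG[x,x]=\cH x$, so $x=\epsilon_\fL(\cH x)\in\epsilon_\fL(U)$; hence the orbit meets $\epsilon(U)$. The main obstacle I anticipate is the analytic core of the second paragraph, namely controlling the co-exact part of the Maurer-Cartan defect through the nonlinear estimate in the right Sobolev spaces; by contrast, the extra bookkeeping caused by the genuinely non-abelian, $a$-field-extended gauge group $\cG$ is largely neutralised by Lemma \ref{lem:elliptic} and Lemma \ref{lem:gaugerel}.
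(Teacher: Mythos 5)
Your proposal is correct and follows essentially the same route as the paper's proof: the same Kuranishi map $x\mapsto x+\tfrac12 d_Q^*\bfG[x,x]$ (the paper's $F_1=\operatorname{Id}-Gd_Q^*D$ with $D=-\tfrac12[\cdot,\cdot]$, identical since $G$ commutes with $d_Q^*$), the same gauge-fixing to the co-closed slice via the implicit function theorem with linearization $d_Q^*d_Q$ on $(\operatorname{Ker} d_Q)^\perp$ (the paper's map $F_2$, combined with Lemma \ref{lem:elliptic}), and the same obstruction $\Phi=$ harmonic projection of the bracket, which agrees with the paper's $\Pi D(\alpha_x,b_x)$ up to a nonzero constant. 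The only difference is expository: you write out in full the classical estimate ($y=d_Q^*\bfG[y,x]$ forces $y=0$ for small $x$) showing that vanishing of the harmonic part of the Maurer--Cartan defect implies full integrability, whereas the paper delegates exactly this step to Kuranishi's original argument in \cite{ku}.
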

In the case that the obstruction map vanishes, it follows from the proof of Theorem \ref{thm:slice} that
$\lbrace \epsilon (x),\: x\in \cM \rbrace$ defines an analytic locally complete family of deformations of $(Q,P,\rho)$.
More precisely, any analytic string algebroid deformation $(Q_t,P_t,\rho_t)_{t\in\Lambda}$ of $(Q,P,\rho)$
is induced by pullback of the family $\lbrace \epsilon (x),\: x\in \cM \rbrace$ via an analytic map $\Lambda \to \cM$.
\begin{proof}
We choose hermitian metrics on $X$ and $\ad \underline{P}$, which enable to define $L^2$-pairings and formal adjoints $d_Q^*$ in the complex $(\fL_Q^\bullet,d_Q)$. We set $$\Delta= d_Q d_Q^* + d_Q^*d_Q$$ and define
$$
\cH^1:= ker(\Delta)=\lbrace x\in \fL_Q^1 \;:\; d_Q x=0 \text{ and } d_Q^*x=0 \rbrace \cong H^1(\fL_Q^\bullet,d_Q).
$$
Define an operator $E=(E_1,E_2):\fL_Q^1\to \fL_Q^2$ by
$$
E = d_Q + \frac{1}{2}[\cdot,\cdot].
$$
Set
$$
\Psi=\lbrace \psi\in  \fL_Q^1 \;:\;  E(\psi) = 0,\; d_Q^*(\psi)=0\rbrace.
$$
Consider the operator $$D= d_Q - E = - \frac{1}{2}[\cdot,\cdot]$$ and the Green operator $G$ of $\Delta$ on $\fL_Q^1$. For any $\psi\in \Psi$, we have
$$
\Delta \psi -d_Q^*D( \psi)=0.
$$
By applying the Green operator to this equality, we obtain, for any $\psi\in \Psi$, 
\begin{equation*}
\label{eq:harmonicpsi}
\psi - G d_Q^* D(\psi)\in \cH^1.
\end{equation*}
Define
$$
\begin{array}{cccc}
 F_1 : & \fL_Q^1 & \rightarrow & \fL_Q^1 \\
 & \psi & \mapsto & \psi - Gd_Q^* D(\psi).
\end{array}
$$
Using the metrics on $X$ and $\ad \underline{P}$ together with the covariant derivative $d^\theta$, we endow the spaces $\fL_Q^*$ with Sobolev norms $\vert\vert\cdot\vert\vert_k$. 
The operator $D$ is a composition of linear and bilinear continuous operators with respect to these norms, so $F_1$ extends in a unique way to a continuous operator from the Banach completion $\cB^k$ of $\fL_Q^1$ with respect to $\vert\vert\cdot\vert\vert_k$ to itself (here we assume that $k$ is large enough, so that $\cB^k$ is a Banach algebra). We denote also this extension by $F_1$. Since its differential at $0$ is the identity, we can define, by the inverse function theorem, a continuous local inverse of $F_1$ at $0$ from a neighbourhood $B$ of $0$ in $\cB^k$ to itself.

Let $U=B\cap \cH^1$. For each $x \in U$ 
we have that $\Delta F_1^{-1}(x) - d_Q^* D F_1^{-1}(x)$ is harmonic, and by elliptic regularity $F_1^{-1}(x)$ is smooth. 
For any $x \in U$, given by $x = F_1(\alpha_x,b_x)$ for some $(\alpha_x,b_x)$, we set
$$
\epsilon(x) = \hat \epsilon(b_x,\alpha_x) = (H-b_x,\theta + \alpha_x),
$$
where $\hat \epsilon$ is as in Lemma \ref{lem:elliptic}, which defines a map
$$
\epsilon \colon U \subset \cH^1 \to \cP
$$
such that $\epsilon(0)=(H,\theta)$. We prove now that $\epsilon$ has the desired properties. As $F_1$ is a composition of linear and bilinear maps, and the map $\hat \epsilon$ 
is holomorphic, so is $\epsilon$ and $\epsilon$ satisfies $(1)$. 

To prove $(2)$, we define a map $\Upsilon:\cP \to \cL_Q^1$ by $\Upsilon(H', \theta')= ( (\theta ' - \theta )^{0,1}, H- H')$.
Let $(\operatorname{Ker} d_Q)^{\perp}$ be the orthogonal of $\operatorname{Ker} d_Q$ in $\fL_Q^0$. We define
$$
\begin{array}{cccc}
F_2 : & (\operatorname{Ker} d_Q)^\perp \times \fL_Q^1 & \rightarrow & \fL_Q^0
\end{array}
$$
by the formula 
$$
F_2(\alpha_0,b_0,\alpha_1,b_1) = d_Q^*\Upsilon((\exp(\alpha_0),\theta^{1,0} - \exp(-\alpha_0)\theta^{1,0},b_0)(H-b_1,\theta + \alpha_1)),
$$
where $(\exp(\alpha_0),\theta^{1,0} - \exp(-\alpha_0)\theta^{1,0},b_0)$ is regarded as an element of $\cG$ acting on $(H-b_1,\theta + \alpha_1) \in \cP$. 
The differential of $F_2$ with respect to $\alpha_0,b_0$ at $0$ is $d_Q^*d_Q$, and we proceed just as for the map $F_1$: we use the implicit function theorem and obtain a map $\psi \mapsto (\alpha_0(\psi),b_0(\psi))$ from a neighbourhood of $0\in \fL_Q^1$
to a neighbourhood of zero in $(\operatorname{Ker} d_Q)^{\perp}$ satisfying $F_2(\alpha_0(\psi),b_0(\psi),\psi) = 0$. By Lemma \ref{lem:elliptic}, every deformation of $(H,\theta)$ is gauge equivalent to some $\hat \epsilon (\psi)$ in the image of $\hat \epsilon$. Furthermore, if $\psi$ is sufficiently small and satisfies the Maurer-Cartan equation, then $\hat \epsilon (\psi)$ is gauge equivalent to a small deformation lying in $\Psi$, which corresponds to an element in our finite-dimensional family.

Finally, to prove (3), we understand the condition for $\epsilon(x)$ to be an integrable pair. 
By construction, this is equivalent to $E(\alpha_x,b_x) = 0$, and we have
\begin{align*}
E(\alpha_x,b_x) & = (d_Q - D)(\alpha_x,b_x)\\
& = d_Q x + (d_Q G d_Q^*D - D) (\alpha_x,b_x)\\
& = - (\Pi + d_Q^*Gd_Q)D(\alpha_x,b_x),
\end{align*}
where $\Pi$ denotes the orthogonal projection
$$
\Pi \colon \fL_Q^2 \to \cH^2:= \lbrace x\in \fL_Q^2 \;:\; d_Q x=0 \text{ and } d_Q^*x=0 \rbrace \cong H^2(\fL_Q^\bullet,d_Q).
$$
Since the images of $\Pi$ and $d_Q^*$ are orthogonal, $E(\alpha_x,b_x) = 0$ if and only if 
$\Pi D(\alpha_x,b_x) = d_Q^*Gd_Q D(\alpha_x,b_x) = 0$. By Proposition \ref{theo:MC}  $(\fL_Q^\bullet,d_Q,[\cdot,\cdot])$ is a DGLA, and we can use the argument of
Kuranishi \cite{ku} which, using the compatibility between $[\cdot,\cdot]$ and $d_Q$, shows that $d_Q^*Gd_Q D(\alpha_x,b_x)$ vanishes if $\Pi D(\alpha_x,b_x)$ does. Hence, $\epsilon(x)$ is integrable precisely when $x$ lies in the vanishing set of the analytic mapping $\Phi \colon H^1(\fL_Q^\bullet,d_Q) \to H^2(\fL_Q^\bullet,d_Q)$ defined by
$$
\Phi(x) = \Pi D(\alpha_x,b_x).
$$
The conditions $\Phi(0) = 0$ and $d\Phi(0) = 0$ are satisfied by construction. Note that, when $\Phi = 0$, the locally complete family of deformations $\cM$ is simply given by the open set $U$, and hence is smooth.
\end{proof}


\appendix

\section{String algebroids in the smooth category}\label{sec:smooth}

For completeness, in this section we give a brief account of the classification of string algebroids in the smooth category. The proofs of Section \ref{sec:holomorphiccase} can be easily adapted to the present situation.  
Our analysis shall be compared with the one made in \cite{Bressler,ChStXu} for regular and transitive Courant algebroids.

Let $G$ be a real Lie group whose Lie algebra $\mathfrak{g}$ admits a bi-invariant pairing $c \colon \mathfrak{g} \otimes \mathfrak{g} \to \RR.$ Let $M$ be a smooth manifold. Smooth principal $G$-bundles over $M$ will be denoted by $P$. Their gauge group will be denoted $\cG_P$.
The definition of smooth string algebroid $(Q,P,\rho)$ with structure group $G$ and underlying pairing $c$ and the notion of morphism are completely analogue to Definition \ref{def:stringholCour} and Definition \ref{def:stringholCourmor}, replacing `holomorphic' by `smooth', and therefore we will omit them.




As for the classification, consider $\mathcal{C}_G$ and $\Omega^2$, the sheaves of smooth $G$-valued functions on $M$ and smooth two-forms on $M$, respectively, and define the subsheaf
$$ \cS \subset \mathcal{C}_G\times \Omega^2 $$
by the analogue to \eqref{eq:dBCShol}.
As in the holomorphic case, Theorem \ref{th:classification}, we have the following:

\begin{theorem}\label{prop:classificationsmooth}
	There is a one to one correspondence between elements in $H^1(\cS)$ and isomorphism classes of smooth string algebroids. 
\end{theorem}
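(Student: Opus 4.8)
The plan is to follow the proof of Theorem \ref{th:classification} almost verbatim, exploiting the fact that in the smooth category every sheaf of differential forms is fine, hence soft, which removes the Hodge-theoretic subtleties that forced us to work with the filtered complex $\Omega^{\leqslant\bullet}$ in the holomorphic setting. First I would record the smooth analogues of the structural results of Section \ref{sec:def-basic}: the smooth version of Proposition \ref{prop:strholCour}, presenting a smooth string algebroid by a triple $(P,H,\theta)$ with $P$ a smooth principal $G$-bundle, $\theta$ a connection on $P$, and $H\in\Omega^3$ satisfying $dH+c(F_\theta\wedge F_\theta)=0$; and the smooth version of Proposition \ref{prop:classification}, identifying isomorphism classes of smooth string algebroids with such triples modulo the equivalence relation generated by \eqref{eq:equiv-rel-1}, namely $H'=H+CS(\theta)-CS(\theta')-dc(\theta\wedge\theta')+dB$ with $B\in\Omega^2$, together with the action of bundle isomorphisms $g\colon P\to P'$ as in \eqref{eq:equiv-rel-2}. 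These are proved exactly as in the holomorphic case, with $\Omega^3$ and $\Omega^2$ replacing $\Omega^{\leqslant 1}$ and $\Omega^{\leqslant 0}$ and with no integrability constraint on $\theta$.

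Next I would build the bijection on the cohomology side. The smooth analogue of Lemma \ref{lemma:exact-sequence-holomorphic} gives a short exact sequence $0\to\Omega^2_{cl}\to\cS\to\mathcal{C}_G\to 1$, whose proof is in fact simpler than the holomorphic one, since producing a local primitive $B$ needs only the ordinary Poincar\'e lemma rather than the $\dbar$-Poincar\'e lemma. Because $\Omega^2$ is soft, the de Rham complex $0\to\Omega^2_{cl}\to\Omega^2\xra{d}\Omega^3\xra{d}\cdots$ is a soft resolution of $\Omega^2_{cl}$, so $H^k(\Omega^2_{cl})\cong H^{k+2}_{dR}(M,\RR)$ and plays the role of $\Omega^{\leqslant\bullet}$. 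Given a good cover $\{U_i\}$ and a representative $\{(g_{ij},B_{ij})\}$ of a class in $H^1(\cS)$, I would set $P=\{g_{ij}\}$, pick any connection $\theta$ with local expressions $\theta_i$, and mimic the construction in the proof of Theorem \ref{th:classification}: the cocycle condition together with the acyclicity $H^1(\Omega^2)=0$ yields local $2$-forms $C_i$ with $B_{ij}-c(a_i\wedge a_{ji})=C_j-C_i$ as in \eqref{eq:Bij-as-Cj-Ci}, and then $H_i:=dC_i-CS(\theta_i)+CS(\theta_0)+dc(\theta_i\wedge\theta_0)$, as in \eqref{eq:def-Hi}, patches to a global $H\in\Omega^3$ with $dH+c(F_\theta\wedge F_\theta)=0$.

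The remaining work is bookkeeping identical to the holomorphic argument: one checks that the class of $(P,H,\theta)$ is independent of the choice of connection (the discrepancy is a $CS$-difference, absorbed by \eqref{eq:equiv-rel-1}) and of the $1$-cochain $\{(h_i,B_i)\}$ used to change the cocycle representative (the discrepancy lies in $d\Omega^2$, hence is absorbed), and that the assignment is injective and surjective, the inverse being read off from \eqref{eq:Bij-as-Cj-Ci} and \eqref{eq:def-Hi}. All of these reduce to the same Chern--Simons identities, which rely only on \eqref{eq:afg-af-ag}, the $\ad$-invariance of $c$, the functoriality \eqref{eq:CSfunctorial}, the relation \eqref{eq:dBCShol}, and the fact that \eqref{eq:CSdiff} is basic; none of these use the complex structure, so they transfer without change. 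I expect the only point requiring genuine care to be the preliminary structural results—the smooth analogues of Lemma \ref{lemma:strisomorphism} and the splitting/normal-form description underlying Proposition \ref{prop:strholCour}, now with full $2$- and $3$-forms in place of their $(p,q)$-typed counterparts, which mirror the transitive Courant algebroid computations of \cite{ChStXu,Bressler}. Once these are in place, the diagram chase of Theorem \ref{th:classification} goes through verbatim, and this is where the bulk of the (routine) verification lies.
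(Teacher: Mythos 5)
Your proposal is correct and is precisely the route the paper takes: the appendix states that the proofs of Section~\ref{sec:holomorphiccase} adapt directly, with the sheaf $\cS\subset\mathcal{C}_G\times\Omega^2$ defined by the analogue of \eqref{eq:dBCShol} and the de Rham complex $0\to\Omega^2_{cl}\to\Omega^2\xra{d}\Omega^3\xra{d}\cdots$ replacing $(\Omega^{\leqslant\bullet},d)$, exactly as you describe. Your observations that the ordinary Poincar\'e lemma replaces the $\dbar$-Poincar\'e lemma, that the integrability condition $F_\theta^{0,2}=0$ disappears, and that the Chern--Simons identities \eqref{eq:afg-af-ag}, \eqref{eq:CSfunctorial}, \eqref{eq:CSdiff} are independent of the complex structure are exactly the points that make the adaptation work.
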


We then describe the structure of $H^1(\cS)$.
The short exact sequence of sheaves of groups \begin{equation*}\label{eq:sescEsmooth}
0 \to \Omega^2_{cl} \to \cS \to \cC_G \to 1.
\end{equation*}
induces a long exact sequence (of pointed sets) in cohomology
\begin{equation}\label{eq:lescEind}
\xymatrix@R-2pc{
	0 \ar[r] & H^0(\Omega^2_{cl}) \ar[r] & H^0(\cS) \ar[r] & H^0(\cC_G) \ar[r]^{\quad \delta_1} & \\
	\ar[r] & H^1(\Omega^2_{cl}) \ar[r] & H^1(\cS) \ar[r] & H^1(\cC_G) \ar[r]^{\delta_2} & H^2(\Omega^2_{cl}),
}
\end{equation}
where $H^1(\cC_G)$ parameterizes isomorphism classes of principal $G$-bundles on $M$. Recall the canonical isomorphisms of abelian groups $H^i(\Omega^2_{cl}) \cong H^{2+i}(M,\RR)$ for any $i \geq 1$. 

To clarify the meaning of $\delta_1$ in the previous sequence, we use the following analogue of Lemma \ref{lemma:sigmaP}

\begin{lemma}\label{lemma:sigmaPbis}
Let $P$ be a smooth principal $G$-bundle over $M$. Then, there is an homomorphism of groups
\begin{equation}\label{eq:sigmaPbis}
\sigma_P \colon \cG_P \to H^3(M,\RR)
\end{equation}
defined by 
$$
\sigma_P(g) = [CS(g \theta) - CS(\theta) - d c (g \theta \wedge \theta)] \in H^3(M,\RR),
$$
for any choice of connection $\theta$ on $P$.
\end{lemma}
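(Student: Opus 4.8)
The plan is to transcribe the proof of Lemma~\ref{lemma:sigmaP}, replacing the elliptic complex $(\Omega^{\leqslant\bullet},d)$ and its first cohomology by the ordinary de Rham complex on $M$ and discarding the type decompositions, which play no role in the argument. Fixing a connection $\theta$ on $P$, for each $g\in\cG_P$ I would set
$$
\sigma^\theta(g) = CS(g\theta) - CS(\theta) - dc(g\theta \wedge \theta),
$$
and first check that this is a \emph{basic} three-form, hence descends to $M$: this is precisely the content of \eqref{eq:CSdiff}--\eqref{eq:CSdiffexp} applied to the pair $(g\theta,\theta)$ with $a = g\theta-\theta$. To see it is closed I would use \eqref{eq:diff-of-CS} to compute $d\sigma^\theta(g) = c(F_{g\theta}\wedge F_{g\theta}) - c(F_\theta\wedge F_\theta)$, and then observe that $F_{g\theta} = \Ad_g F_\theta$ together with the $\ad$-invariance of $c$ makes this difference vanish. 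Thus $\sigma^\theta(g)$ determines a class in $H^3(M,\RR)$.

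Next I would establish independence of the chosen connection. Given a second connection $\theta'$, I would expand
$$
\sigma^{\theta'}(g) - \sigma^\theta(g) = \big(CS(g\theta') - CS(g\theta)\big) - \big(CS(\theta') - CS(\theta)\big) - dc(g\theta'\wedge\theta') + dc(g\theta\wedge\theta).
$$
Writing each Chern--Simons difference as the pullback of the basic form \eqref{eq:CSdiffexp} plus an exact correction via \eqref{eq:CSdiff}, and using that \eqref{eq:CSdiffexp} is invariant under the simultaneous gauge transformation $(\theta',\theta)\mapsto(g\theta',g\theta)$, the basic contributions coming from $CS(g\theta')-CS(g\theta)$ and $CS(\theta')-CS(\theta)$ coincide and cancel, leaving an exact three-form. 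Hence $[\sigma^\theta(g)]\in H^3(M,\RR)$ is independent of $\theta$, and I may define $\sigma_P(g):=[\sigma^\theta(g)]$.

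Finally I would verify that $\sigma_P$ is a homomorphism by following verbatim the last computation in the proof of Lemma~\ref{lemma:sigmaP}. Expanding $\sigma^\theta(gg')$, I would pull back the basic form $CS(g'\theta)-CS(\theta)-dc(g'\theta\wedge\theta)$ by $g$---which leaves it unchanged, as it is basic and hence invariant under pullback by $\cG_P$---and then reorganize the remaining terms using the cocycle identities \eqref{eq:afg-af-ag} and the functoriality \eqref{eq:CSfunctorial}. This yields $\sigma^\theta(gg')=\sigma^\theta(g)+\sigma^\theta(g')+dB$ for a suitable two-form $B$, so that $\sigma_P(gg')=\sigma_P(g)+\sigma_P(g')$ in $H^3(M,\RR)$.

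Since every step is formally identical to the holomorphic case, there is no genuine obstacle; the only delicate point is the bookkeeping of gauge actions and pullbacks in the homomorphism step, where the identities \eqref{eq:afg-af-ag} and the basic-ness of \eqref{eq:CSdiff} do the real work. I expect that final computation to be the main, though still routine, source of care.
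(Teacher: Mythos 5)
Your proposal is correct and follows essentially the paper's own approach: the paper proves this lemma by adapting the proof of Lemma~\ref{lemma:sigmaP} verbatim to the smooth setting (dropping the type decomposition and the constraint $\theta \in \cA_P$), which is exactly what you do, with the same three steps of basic-ness plus closedness via \eqref{eq:diff-of-CS}, connection-independence via \eqref{eq:CSdiff}--\eqref{eq:CSfunctorial}, and the homomorphism property via the cocycle identities \eqref{eq:afg-af-ag}. Your explicit verification of closedness through $F_{g\theta} = \Ad_g F_\theta$ and the $\ad$-invariance of $c$ is a harmless elaboration of what the paper leaves implicit.
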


We define now a subgroup $I$ of the additive group $H^3(M,\RR)$, induced by $\delta_1$ in \eqref{eq:lescEind}.  The following result is analogue to Lemma \ref{lemma:delta1}.


\begin{lemma}\label{lemma:delta1bis}
Via the isomorphism $H^1(\Omega^2_{cl}) \cong H^3(M,\RR)$, the map $\delta_1$ in \eqref{eq:lescEindcx} is given by
$$
\delta_1(g) = \sigma_{M \times G}(g) = [g^*\sigma_3],
$$ 
for $g \in C^\infty(M,G)$ and $\sigma_3$ the Cartan $3$-form on $G$ associated to $c$. Consequently, $\delta_1$ is a morphism of groups, and we define
$$
I := \operatorname{Im} \delta_1 \subset H^3(M,\RR).
$$
\end{lemma}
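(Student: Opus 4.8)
The plan is to follow verbatim the strategy used for the holomorphic statement in Lemma \ref{lemma:delta1}, now invoking the smooth analogues of all the ingredients. Everything reduces to making the connecting homomorphism $\delta_1$ of \eqref{eq:lescEind} explicit by a diagram chase on the short exact sequence \eqref{eq:sescEsmooth}, and then identifying the resulting \v Cech class with the de Rham class $[g^*\sigma_3]$ via the canonical isomorphism $H^1(\Omega^2_{cl}) \cong H^3(M,\RR)$. No new analytic input is needed beyond what is already set up in the appendix.

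First I would fix a good cover $\{U_i\}$ of $M$ and, given a global section $g \in H^0(\cC_G) = C^\infty(M,G)$, lift it locally to sections $(g,B_i) \in \cS(U_i)$. Such lifts exist because the projection $\cS \to \cC_G$ is a surjection of sheaves (softness of $\Omega^2$) and because the defining relation of $\cS$, the smooth analogue of \eqref{eq:dBCShol}, only constrains $dB_i$, namely
\[ dB_i = CS(g^{-1}\theta_0) - CS(\theta_0) - dc(g^{-1}\theta_0 \wedge \theta_0), \]
which is a globally defined closed three-form by \eqref{eq:diff-of-CS}. Using the group law \eqref{eq:groupproduct} together with the identities \eqref{eq:afg-af-ag} and the vanishing $c(a\wedge a)=0$ for a one-form $a$, a short computation gives $(g,B_i)(g,B_j)^{-1} = (1, B_i - B_j)$, so that $\delta_1(g) = [\{B_i - B_j\}] \in H^1(\Omega^2_{cl})$.

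Next I would transport this \v Cech class to $H^3(M,\RR)$ under the isomorphism $H^1(\Omega^2_{cl}) \cong H^3(M,\RR)$, i.e. the smooth counterpart of Lemma \ref{lemma:omega20k} coming from the soft resolution $0 \to \Omega^2_{cl} \to \Omega^2 \xra{d} \Omega^3 \xra{d} \cdots$. The key observation is that the local lifts $B_i$ serve a double purpose: they are primitives for the cocycle $\{B_i - B_j\}$, and since $B_i - B_j$ is closed we have $dB_i = dB_j$ on overlaps, so their common differential is precisely the global closed three-form representing $\delta_1(g)$ in de Rham cohomology. By the displayed relation this representative is $CS(g^{-1}\theta_0) - CS(\theta_0) - dc(g^{-1}\theta_0 \wedge \theta_0)$, which by Lemma \ref{lemma:sigmaPbis} applied to $P = M \times G$ (with $\theta = \theta_0$) equals $\sigma_{M\times G}(g)$. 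Finally, invoking the smooth analogue of \eqref{eq:dBCSexphol}, where the Maurer--Cartan equation $d^{\theta_0}a^g = -\frac{1}{2}[a^g,a^g]$ reduces the representative to $-\frac{1}{6}c(a^g,[a^g,a^g]) = g^*\sigma_3$, yields $\delta_1(g) = [g^*\sigma_3]$. That $g \mapsto [g^*\sigma_3]$ is a group homomorphism follows from the standard fact that $(gh)^*\sigma_3 - g^*\sigma_3 - h^*\sigma_3$ is exact (equivalently, from the homomorphism property already recorded in Lemma \ref{lemma:sigmaPbis}), whence $\delta_1$ is a homomorphism and $I := \operatorname{Im}\delta_1$ is a well-defined subgroup of $H^3(M,\RR)$.

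The only delicate point, exactly as in the holomorphic case, is the bookkeeping of conventions: the appearance of $g^{-1}$ in the defining relation of $\cS$ against the $g\theta$ convention of Lemma \ref{lemma:sigmaPbis}, and the sign built into the identification $H^1(\Omega^2_{cl}) \cong H^3(M,\RR)$. I expect this to be the main (though purely formal) obstacle, and it is resolved in precisely the same manner as in the proof of Lemma \ref{lemma:delta1}; all the softness and acyclicity statements, together with the Chern--Simons functoriality \eqref{eq:CSfunctorial}, carry over unchanged from the holomorphic to the smooth setting, so no genuinely new argument is required.
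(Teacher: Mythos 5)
Your proposal is correct and takes essentially the same route as the paper, which proves Lemma \ref{lemma:delta1bis} by directly adapting the diagram chase behind Lemma \ref{lemma:delta1}: local lifts $(g,B_i)\in\cS(U_i)$, the coboundary computation $(g,B_i)(g,B_j)^{-1}=(1,B_i-B_j)$ via \eqref{eq:groupproduct} and \eqref{eq:afg-af-ag}, transport of the \v Cech class to de Rham cohomology through the soft resolution of $\Omega^2_{cl}$, and the Maurer--Cartan reduction of the Chern--Simons difference to $g^*\sigma_3$, with Lemma \ref{lemma:sigmaPbis} giving the identification with $\sigma_{M\times G}$ and the homomorphism property. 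The one cosmetic slip is your appeal to ``softness of $\Omega^2$'' for the surjectivity of $\cS\to\cC_G$: what is actually needed is the Poincar\'e lemma (the constraining three-form is closed, hence locally exact), which you in effect also invoke, and which makes this step simpler in the smooth category than the type-decomposition argument of Lemma \ref{lemma:exact-sequence-holomorphic}.
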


For the next result, recall that 
any isomorphism class of smooth principal $G$-bundles $[P] \in H^1(\cC_G)$ defines a first Pontryagin class
$$
p_1([P]) \in H^4(M,\RR),
$$
represented by $c(F_\theta \wedge F_\theta) \in \Omega^4$ for any choice of connection $\theta$ on $P$. 
The next result is analogue to Proposition \ref{prop:lescEindcx}.

\begin{proposition}\label{prop:lescEind}
	There is an exact sequence of pointed sets
	\begin{equation}\label{eq:lescEind2}
	\xymatrix{
		0 \ar[r] & H^3(M,\RR)/I \ar[r]^{\quad \iota} & H^1(\cS) \ar[r]^\jmath & H^1(\cC_G) \ar[r]^{^{p_1}\quad } & H^4(M,\RR).
	}
	\end{equation}
	Furthermore,  $\iota$ induces a transitive action of the additive group $H^3(M,\RR)$ on the fibres of the map $H^1(\cS)\xrightarrow{j} H^1(\cC_G)$.
\end{proposition}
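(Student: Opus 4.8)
The plan is to mirror the proof of Proposition \ref{prop:lescEindcx} essentially verbatim, the one structural simplification being that in the smooth category the sheaf $\Omega^2_{cl}$ carries the ordinary de Rham resolution $0 \to \Omega^2_{cl} \to \Omega^2 \xrightarrow{d} \Omega^3 \xrightarrow{d} \cdots$, so that the isomorphisms $H^i(\Omega^2_{cl}) \cong H^{i+2}(M,\RR)$ (for $i\geqslant 1$) take the place of the softness argument of Lemma \ref{lemma:omega20k}, and the auxiliary complex $(\Omega^{\leqslant\bullet},d)$ of the holomorphic case is no longer needed. With these identifications in hand, the long exact sequence of pointed sets \eqref{eq:lescEind} is the only extra input beyond the two standard model computations for bundles and for exact Courant algebroids.

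First I would establish exactness at $H^1(\cS)$, which is immediate from \eqref{eq:lescEind}: the fibre of the base point under $\jmath$ coincides with the image of $H^1(\Omega^2_{cl}) \to H^1(\cS)$, and by Lemma \ref{lemma:delta1bis} the indeterminacy of that image is precisely the subgroup $I = \operatorname{Im}\delta_1$. Hence the map factors through an injection $\iota \colon H^3(M,\RR)/I = H^1(\Omega^2_{cl})/I \hookrightarrow H^1(\cS)$, giving both the injectivity of $\iota$ and exactness at $H^1(\cS)$ as pointed sets.

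The substantive step is exactness at $H^1(\cC_G)$, which reduces to identifying the connecting homomorphism $\delta_2$ in \eqref{eq:lescEind} with the Pontryagin class $p_1$. Here I would transcribe the \v Cech computation of Proposition \ref{prop:lescEindcx}. Given $[\{g_{ij}\}] \in H^1(\cC_G)$, lift it to a cochain $\{(g_{ij},B_{ij})\}$ in $\cS$; its coboundary is $\{(1,d_{ijk})\}$ with $d_{ijk}$ the smooth analogue of \eqref{eq:dijk}, so that $\delta_2([\{g_{ij}\}]) = [\{d_{ijk}\}] \in H^2(\Omega^2_{cl})$. Choosing a connection $\theta$ on $P$ with local forms $\theta_i$ and setting $a_i = \theta_i - \theta_0$, the $\ad$-invariance of $c$ together with the identities \eqref{eq:aij} reduces $\{d_{ijk}\}$ to the coboundary $\delta(\{B_{ij} - c(a_i\wedge a_{ji})\})$; applying the smooth version of \eqref{eq:dBCShol} and the basic-form manipulations of \eqref{eq:diff-CS} then yields $d(B_{ij} - c(a_i\wedge a_{ji})) = \delta(CS(\theta_i) - CS(\theta_0) - dc(\theta_i\wedge\theta_0))$, exactly as in \eqref{eq:dB-caa-as-coboundary}. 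Passing through the de Rham isomorphism $H^2(\Omega^2_{cl}) \cong H^4(M,\RR)$, the class $\delta_2([\{g_{ij}\}])$ is therefore represented by $d\,CS(\theta_i) = c(F_{\theta_i}\wedge F_{\theta_i})$, that is, $\delta_2([P]) = p_1([P])$, which is the desired exactness.

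Finally, for the transitivity of the action I would observe that a class $[\{(0,C_{ij})\}] \in H^1(\Omega^2_{cl})$ acts on $[\{(g_{ij},B_{ij})\}]$ by $[\{(g_{ij},B_{ij}+C_{ij})\}]$, and that any two lifts of a fixed $[P] \in H^1(\cC_G)$ differ by a $1$-cocycle valued in $\Omega^2_{cl}$, exactly as in the holomorphic argument. I expect the only real obstacle to be bookkeeping: checking that each step of the holomorphic \v Cech computation survives once the type decomposition is dropped and the single sheaf $\Omega^2_{cl}$ replaces both $\Omega^{2,0}_{cl}$ and the complex $(\Omega^{\leqslant\bullet},d)$. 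Since those manipulations never use the $(p,q)$-type of the forms in an essential way—relying only on the $\ad$-invariance of $c$, the Maurer--Cartan identities \eqref{eq:afg-af-ag}, and the functoriality \eqref{eq:CSfunctorial} of the Chern--Simons form—the transcription should go through without modification, and the smooth case is if anything cleaner than the holomorphic one.
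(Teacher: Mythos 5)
Your proposal is correct and is essentially the paper's own proof: the appendix explicitly states that Proposition \ref{prop:lescEind} follows by adapting the arguments of Section \ref{sec:holomorphiccase}, and your transcription of the \v Cech computation identifying $\delta_2$ with $p_1$, together with the replacement of $(\Omega^{\leqslant\bullet},d)$ by the de Rham resolution of $\Omega^2_{cl}$ (giving $H^i(\Omega^2_{cl})\cong H^{2+i}(M,\RR)$), is exactly the intended adaptation. You also correctly note the one place where the smooth case is genuinely simpler, namely that local surjectivity of $\cS\to\cC_G$ needs only the Poincar\'e lemma rather than the type-decomposition argument of Lemma \ref{lemma:exact-sequence-holomorphic}.
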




\begin{remark}
	When $G$ is abelian, $\sigma_3 = 0$ and thus $H^1(\cS)$ is a $H^3(M,\RR)$-torsor, so Proposition \ref{prop:lescEind} recovers \cite[Prop. 2.17]{Rubioth}.
\end{remark}

The description of the fibre $\jmath^{-1}([P])$ in \eqref{eq:lescEind2} \emph{\`a la} de Rham is as follows. In the statement, $\cG_P$ denotes the group of gauge transformations of $P$.


\begin{proposition}\label{theo:deRham}
	Let $P$ be a principal $G$-bundle over $M$. Denote by $\mathcal{A}_P$ the space of connections on $P$. Then, there is a natural bijection
	\begin{equation*}\label{eq:lescEind3smooth}
	\jmath^{-1}([P]) \cong \{(H,\theta) \in \Omega^3 \times \cA_P \; | \; dH = c(F_\theta \wedge F_\theta)\}/ \sim,
	\end{equation*}
	where $(H,\theta)\sim (H',\theta')$ if, for some $B\in\Omega^2$, and $g\in \cG_P$ 
	\begin{equation}\label{eq:anomalysmooth}
	H'  = H + CS(g\theta) - CS(\theta') - dc(g\theta \wedge \theta') + dB.
	\end{equation}
\end{proposition}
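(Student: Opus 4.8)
The plan is to deduce this from the smooth classification theorem (Theorem~\ref{prop:classificationsmooth}) exactly as Proposition~\ref{theo:deRhamC} was deduced from Theorem~\ref{th:classification} in the holomorphic category. First I would transcribe the smooth version of the triple description of $H^1(\cS)$ obtained inside the proof of Theorem~\ref{th:classification}. Over a good cover $\{U_i\}$, a class in $H^1(\cS)$ is represented by a triple $(P,H,\theta)$ with $P=\{g_{ij}\}\in Z^1(\cC_G)$, $\theta\in\cA_P$ given locally by $\{\theta_i\}$, and $H\in\Omega^3$ satisfying $dH = c(F_\theta\wedge F_\theta)$, taken modulo the equivalence relation generated by the change-of-connection relation (the smooth analogue of \eqref{eq:equiv-rel-1}) and the change-of-trivialisation relation (the smooth analogue of \eqref{eq:equiv-rel-2}). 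The cocycle computations establishing this description use only the functoriality \eqref{eq:CSfunctorial} and the primitive identity \eqref{eq:diff-of-CS} of the Chern--Simons form together with the cochain identities \eqref{eq:afg-af-ag}; all of these hold verbatim in the smooth setting once $\cO_G$ is replaced by $\cC_G$ and $\Omega^{\leqslant k}$ by $\Omega^{k+2}$, so I would only indicate the necessary substitutions rather than repeat the calculation.

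Next I would restrict the projection $\jmath\colon H^1(\cS)\to H^1(\cC_G)$ to the fibre over a fixed class $[P]$. Fixing a representative cocycle $\{g_{ij}\}$ for $[P]$, the fibre $\jmath^{-1}([P])$ is described by the triples with this cocycle modulo the change-of-connection relation and modulo only those change-of-trivialisation transformations $\{h_i\}$ that return the same cocycle, i.e.\ satisfy $g_{ij}=h_ig_{ij}h_j^{-1}$. Such cochains $\{h_i\}$ are precisely the global automorphisms $g\in\cG_P$ of $P$, which is the key identification making $\cG_P$ appear in the statement.

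Finally I would compose the two surviving relations. Starting from $(H,\theta)$, the restricted change-of-trivialisation relation by $g\in\cG_P$ produces $(H,g\theta)$, and the change-of-connection relation from $g\theta$ to an arbitrary $\theta'\in\cA_P$ then yields
\begin{equation*}
H' = H + CS(g\theta) - CS(\theta') - dc(g\theta\wedge\theta') + dB
\end{equation*}
for some $B\in\Omega^2$, which is exactly \eqref{eq:anomalysmooth}. This exhibits $\jmath^{-1}([P])$ as the stated quotient and completes the bijection.

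There is no serious analytic difficulty here; the content is bookkeeping, and the one point requiring genuine care is the identification of the admissible cochains $\{h_i\}$ with the full gauge group $\cG_P$. I would verify that imposing $g_{ij}=h_ig_{ij}h_j^{-1}$ yields exactly $\cG_P$ and that composing the two relations introduces no collapsing beyond \eqref{eq:anomalysmooth}; this holds because the change-of-connection relation already exhausts all freedom at $g=\mathrm{id}$, so no redundancy is introduced. If desired, the same computation also pins down, in parallel with Proposition~\ref{theo:deRhamC}, the isotropy of the transitive $H^3(M,\RR)$-action of Proposition~\ref{prop:lescEind} on $\jmath^{-1}([P])$ as $\Im\,\sigma_P\subset H^3(M,\RR)$, with $\sigma_P$ as in Lemma~\ref{lemma:sigmaPbis}: if $[(H,\theta)]=[(H+C,\theta)]$, then \eqref{eq:anomalysmooth} forces $[C]=\sigma_P(g)$ for some $g\in\cG_P$.
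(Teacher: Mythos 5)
Your proposal is correct and follows essentially the same route as the paper: the appendix explicitly defers to the proofs of Section \ref{sec:holomorphiccase}, and the paper's own argument for Proposition \ref{theo:deRhamC} is exactly your strategy of restricting the change-of-trivialisation relation \eqref{eq:equiv-rel-2} to cochains $\{h_i\}$ satisfying $g_{ij}=h_ig_{ij}h_j^{-1}$, identifying these with $\cG_P$, and composing with the change-of-connection relation \eqref{eq:equiv-rel-1} to obtain \eqref{eq:anomalysmooth}. Your closing remark on the isotropy $\Im\,\sigma_P$ likewise matches the paper's treatment of $I_{[P]}$.
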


Using Proposition \ref{theo:deRham}, we finish this section establishing the relation between smooth string algebroids and the notion of real string class on a principal $G$-bundle, as introduced by Redden \cite{Redden}. Recall that integral string classes parametrize string structures on $P$ up to homotopy. When $G = \Spin(n)$, string structures are in correspondence with lifts of the classifying map $M \to BG$ determined by $P$, to the classifying space of the \emph{string group}. This provides further motivation for Definitions \ref{def:stringholCour} and \ref{def:stringholCourmor}.

\begin{definition}[\cite{Redden}]\label{def:stringclass}
	A \emph{real string class} on $P$ is a class $[\hat H] \in H^3(P,\RR)$ such that the restriction of $[\hat H]$ to any fibre of $P$ coincides with the class of the Cartan three-form $[\sigma_3] \in H^3(G,\RR)$.
\end{definition}

String classes form a torsor over $H^3(M,\RR)$, that we shall denote $H^3_{str}(P,\RR)$, where the action is defined by pullback and addition \cite[Prop. 2.16]{Redden}
$$
[\hat H] \to [\hat H] + p^*[H],
$$
where $p \colon P \to M$ is the canonical projection on the principal bundle $P$ and $[H] \in H^3(M,\RR)$. 
To describe $\jmath^{-1}([P])$ in terms of real string classes, we consider the additive subgroup
	$$
	I_{[P]} := \Im \; \sigma_P \subset H^3(M,\RR).
	$$
Note that $I = I_{M \times G}$ in Lemma \ref{lemma:delta1bis}.

\begin{proposition}\label{lemma:deRhamI}
Using the characterization of $\jmath^{-1}([P])$ in Theorem \ref{theo:deRham}, there is an isomorphism of $H^3(M,\RR)/I_{[P]}$-torsors
\begin{equation*}\label{eq:torsoriso}
\jmath^{-1}([P]) \cong H^3_{str}(P,\RR)/I_{[P]}
\end{equation*}
defined by $[(H,\theta)]  \mapsto [p^*H - CS(\theta)]$
\end{proposition}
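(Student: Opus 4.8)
The plan is to show that the assignment $\Theta\colon [(H,\theta)]\mapsto [p^*H - CS(\theta)]$ is a well-defined map into $H^3_{str}(P,\RR)/I_{[P]}$, and then that it is a bijection intertwining the two $H^3(M,\RR)/I_{[P]}$-torsor structures; since an equivariant map between torsors over the same group is automatically an isomorphism, this will suffice. Throughout I would freely use that $H^3_{str}(P,\RR)$ is a torsor over $H^3(M,\RR)$ under pullback-and-add \cite[Prop.~2.16]{Redden} (in particular that $p^*\colon H^3(M,\RR)\to H^3(P,\RR)$ is injective in the relevant degree), together with $dCS(\theta)=c(F_\theta\wedge F_\theta)$, the functoriality \eqref{eq:CSfunctorial}, and the fact that $CS(\alpha)-CS(\beta)-dc(\alpha\wedge\beta)$ is basic for any two connections $\alpha,\beta$ on $P$. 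First I would check that $\hat H := p^*H - CS(\theta)$ represents a string class: it is closed since $d\hat H = p^*dH - c(F_\theta\wedge F_\theta)=0$ by the integrability condition of Proposition~\ref{theo:deRham}, and on a fibre $P_x\cong G$ the connection $\theta$ restricts to the Maurer--Cartan form $\omega$ while $F_\theta$ restricts to $0$, so $CS(\theta)|_{P_x}=-\tfrac16 c(\omega,[\omega,\omega])$ and $p^*H|_{P_x}=0$; hence $\hat H|_{P_x}$ represents $[\sigma_3]$ and $[\hat H]\in H^3_{str}(P,\RR)$.

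The crux is to prove that $\Theta$ descends modulo the equivalence relation \eqref{eq:anomalysmooth} and modulo $I_{[P]}$. Writing $\hat H'-\hat H = p^*(H'-H) - \big(CS(\theta')-CS(\theta)\big)$ and substituting \eqref{eq:anomalysmooth}, the pullback $p^*(H'-H)$ equals the basic form $CS(g\theta)-CS(\theta')-dc(g\theta\wedge\theta')+dp^*B$ on $P$. After cancelling the $CS(\theta')$ terms, the remainder organizes as
\begin{equation*}
\hat H'-\hat H = \big(CS(g\theta)-CS(\theta)-dc(g\theta\wedge\theta)\big) + d\,c\big(g\theta\wedge(\theta-\theta')\big) + dp^*B,
\end{equation*}
up to the global sign fixed by the conventions. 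The first parenthesis is exactly a representative of $\sigma_P(g)\in I_{[P]}$ (Lemma~\ref{lemma:sigmaPbis}) pulled back to $P$, and it is basic, while the remaining two terms are exact on $P$. Hence $[\hat H']-[\hat H]=p^*\sigma_P(g)$ in $H^3(P,\RR)$, so $\Theta$ is unchanged in $H^3_{str}(P,\RR)/I_{[P]}$. In particular the sub-case $g=\mathrm{id}$ gives invariance under a change of connection, and $\theta=\theta'$, $g=\mathrm{id}$ gives invariance under a $B$-shift.

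Finally I would establish bijectivity and equivariance from the torsor structures. For equivariance, the $H^3(M,\RR)$-action of Proposition~\ref{prop:lescEind} is represented on pairs by $[C]\cdot[(H,\theta)]=[(H+C,\theta)]$ with $C$ closed, and $\Theta(H+C,\theta)=[\hat H]+p^*[C]$ is precisely the torsor action on $H^3_{str}(P,\RR)$; thus $\Theta$ is $H^3(M,\RR)/I_{[P]}$-equivariant. Surjectivity then follows by fixing a base point: choose one connection $\theta_0$ and, using $p_1^c([P])=0$, one $H_0$ with $dH_0=c(F_{\theta_0}\wedge F_{\theta_0})$, and move along the transitive $H^3(M,\RR)$-action. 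Injectivity follows by first using connection-invariance to reduce to $\theta=\theta'$, then using injectivity of $p^*$ to convert the hypothesis $[\hat H]=[\hat H']$ mod $I_{[P]}$ into $[H-H']=\sigma_P(g)$ in $H^3(M,\RR)$, and recognizing the right-hand side via \eqref{eq:afg-af-ag} and the naturality of $CS$ as the anomaly \eqref{eq:anomalysmooth} for a suitable gauge transformation. An equivariant bijection of torsors is an isomorphism, which completes the proof.

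The main obstacle I anticipate is the descent computation of the second paragraph: it requires careful bookkeeping of which forms live on $P$ and which descend to $M$, correct use of the functoriality \eqref{eq:CSfunctorial} under the gauge transformation $g$ acting as a bundle automorphism of $P$, and the recognition that the non-exact remainder is exactly the pullback of a representative of $\sigma_P(g)$. This is what forces the quotient by $I_{[P]}$ and is the conceptual heart of the statement; the bijectivity and equivariance are comparatively routine once Redden's torsor structure is invoked.
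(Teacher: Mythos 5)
Your proposal is correct and takes essentially the same route as the paper's own (very terse) proof: well-definedness of $[(H,\theta)] \mapsto [p^*H - CS(\theta)]$ via \eqref{eq:anomalysmooth}, equivariance for the $H^3(M,\RR)$-action, and the conclusion from Redden's torsor structure \cite[Prop.~2.16]{Redden} --- your direct surjectivity/injectivity checks are redundant once equivariance between torsors over the same group is in hand, as you yourself note. Your hedge about the ``global sign'' is in fact necessary: as printed, \eqref{eq:anomalysmooth} is sign-inconsistent with the integrability condition $dH = c(F_\theta\wedge F_\theta)$ of Proposition \ref{theo:deRham} (apply $d$ to both sides and compare), so with the consistent convention your descent computation closes up to a representative of $\pm\sigma_P(g)$ plus exact terms, which is harmless since $I_{[P]}$ is a subgroup.
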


\begin{proof}
	Note first that \eqref{eq:anomalysmooth} implies that $[(H,\theta)]  \mapsto [p^*H - CS(\theta)]$ is well defined, and it is equivariant with respect to the natural action of $H^3(M,\RR)$ on the domain and the target. The proof follows from the fact that $H^3_{str}(P,\RR)/I_{[P]}$ is a $H^3(M,\RR)/I_{[P]}$-torsor, by \cite[Prop. 2.16]{Redden}.
\end{proof}

As observed in \cite{BarHek,GF}, an equivariant exact Courant algebroid $\hat E$ over the total space of $P$ equipped with a (trivially) extended action corresponds to a cohomology class $[H - p^* CS(\theta)] \in H^3(P,\RR)$. The reduction of $\hat E$ defines a string algebroid on $M$ in the class $[(H,\theta)] \in \jmath^{-1}([P])$. In fact, the torsor of extended actions modulo isomorphism can be identified with $H^3_{str}(P,\RR)$ (see \cite[Prop. 3.7]{BarHek}). Proposition \ref{lemma:deRhamI} answers a question by Hekmati, about the discrepancy between the isomorphism classes of the reduced Courant algebroids and the torsor $H^3_{str}(P,\RR)$.

%
%


\end{document}